\newcommand*{\rom}[1]{\expandafter\@slowromancap\romannumeral #1@}
\renewcommand*\env@matrix[1][*\c@MaxMatrixCols c]{%
	\hskip -\arraycolsep
	\let\@ifnextchar\new@ifnextchar
	\array{#1}}
\theoremstyle{definition}
\newtheorem{theorem}{Theorem}[section]
\newtheorem{lemma}[theorem]{Lemma}
\newtheorem{definition}[theorem]{Definition}
\newtheorem{question}[theorem]{Question}
\theoremstyle{remark}
\newtheorem{remark}[theorem]{Remark}
\title{Asymptotic dynamics on amenable groups and van der Corput sets}
\author[]{Sohail Farhangi}
\address[Sohail Farhangi]{Department of Mathematics, University of Adam Mickiewicz, Pozna\'n, Poland}
\address[Sohail Farhangi]{Beijing Institute of Mathematical Sciences and Applications, Beijing, P.R.C. 101408.}
\email{sohail.farhangi@gmail.com}
\author[]{Robin Tucker-Drob}
\address[Robin Tucker-Drob]{Department of Mathematics, University of Florida, Gainesville, FL, USA}
\email{r.tuckerdrob@ufl.edu}
\begin{document}

\maketitle
\begin{abstract}
    We answer a question of Bergelson and Lesigne as well as a question of Fish and Skinner. 
    The first question is answered by showing that the notion of van der Corput set does not depend on the F\o lner sequence used to define it. 
    This result has been discovered independently by Sa\'ul Rodr\'iguez Mart\'in. 
    Both ours and Rodr\'iguez's proofs proceed by first establishing a converse to the Furstenberg Correspondence Principle for amenable groups, which answers the second question. 
    This involves studying the distributions of Reiter sequences over congruent sequences of tilings of the group.
    
    Lastly, we show that many of the equivalent characterizations of van der Corput sets in $\mathbb{N}$ that do not involve F\o lner sequences remain equivalent for arbitrary countably infinite groups.
\end{abstract}

\section{Introduction}

Let $G$ be a countably infinite amenable group and $\mathcal{F}=(F_n)_{n=1}^\infty$ a left-F\o lner sequence in $G$. 
A subset $V$ of $G$ is an \textbf{$\mathcal{F}$-van der Corput set ($\mathcal{F}$-vdC set)}\footnote{While our definition of $\mathcal{F}$-vdC set seems different from that of Rodr\'guez \cite{SaulsvdC}, he shows that they are equivalent.} if for any $(c_g)_{g \in G} \subseteq \mathbb{S}^1$ satisfying

\begin{equation}
    \lim_{n\rightarrow\infty}\frac{1}{|F_n|}\sum_{g \in F_n}c_{vg}\overline{c_g} = 0\text{ for all }v \in V,
    \end{equation}
    we have
    \begin{equation}
    \lim_{n\rightarrow\infty}\frac{1}{|F_n|}\sum_{g \in F_n}c_g = 0.
\end{equation}

Bergelson and Lesigne \cite[Page 44]{vanderCorputSetsInZ^d} showed that if $V \subseteq \mathbb{Z}$ is a $([1,N])_{N = 1}^\infty$-vdC set, then it is also an $\mathcal{F}$-vdC set for any F\o lner sequence $\mathcal{F}$ in $(\mathbb{Z},+)$. They then asked whether or not the converse holds. To be more precise, if $\mathcal{F}$ is a F\o lner sequence in $\mathbb{Z}$ and $V \subseteq \mathbb{Z}$ is $\mathcal{F}$-vdC, is $V$ also a $([1,N])_{N = 1}^
\infty$-vdC set? One of the main results of this paper is Theorem \ref{FvdCIsvdCForAmenableGroups}, which yields a positive answer to this question. In fact, we show that for any countably infinite amenable group $G$, and any left-F\o lner sequences $\mathcal{F}_1$ and $\mathcal{F}_2$, a set $V \subseteq G$ is $\mathcal{F}_1$-vdC if and only if it is $\mathcal{F}_2$-vdC. Below we only state a special case of Theorem \ref{FvdCIsvdCForAmenableGroups}.

\begin{theorem}
    Let $G$ be a countably infinite amenable group and let $\mathcal{F} = (F_n)_{n = 1}^\infty$ be a left-F\o lner sequence in $G$. A set $V \subseteq G$ is an $\mathcal{F}$-vdC set if and only if for any measure preserving system $(X,\mathscr{B},\mu,(\tau_g)_{g \in G})$ and any $f:X\rightarrow\mathbb{S}^1$ satisfying $\langle \tau_vf,f\rangle = 0$ for all $v \in V$, we have $\int_Xfd\mu = 0$.
\end{theorem}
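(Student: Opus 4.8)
The plan is to recast the equivalence as the two halves of a correspondence between shift-invariant probability measures on the universal system $\Omega := (\mathbb{S}^1)^G$ and $\mathbb{S}^1$-valued sequences on $G$. Equip $\Omega$ with the $G$-action $(S_h\omega)(g) := \omega(gh)$ (a left action, $S_{h_1h_2}=S_{h_1}S_{h_2}$) and the evaluation function $F(\omega):=\omega(e)$, which satisfies $|F|\equiv 1$ and $F(S_g\omega)=\omega(g)$. The point is that for a sequence $c=(c_g)_{g\in G}\in\Omega$ one has, writing $\langle S_vF,F\rangle := \int_\Omega (F\circ S_v)\overline{F}\,d\mu$, the identities $F(S_gc)=c_g$ and $(F\circ S_v)(S_gc)\overline{F(S_gc)}=c_{vg}\overline{c_g}$, so that empirical averages of $F$ along an orbit reproduce exactly the two averages appearing in $(1)$ and $(2)$. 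One implication is then the classical Furstenberg correspondence (sequence $\rightsquigarrow$ invariant measure) and the other is its converse (invariant measure $\rightsquigarrow$ sequence); the converse is the crux.

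For the implication that the displayed dynamical property implies that $V$ is $\mathcal{F}$-vdC, I would argue by contraposition. Fix $c=(c_g)\in\Omega$ satisfying $(1)$ and suppose the averages $\frac{1}{|F_n|}\sum_{g\in F_n}c_g$ do not tend to $0$. Passing to a subsequence I may assume these averages converge to some $a\neq 0$ and, by weak-$*$ compactness of the (metrizable) space of probability measures on the compact space $\Omega$, that the empirical measures $\mu_n:=\frac{1}{|F_n|}\sum_{g\in F_n}\delta_{S_gc}$ converge weak-$*$ to some $\mu$. The left-F\o lner property gives $|hF_n\triangle F_n|/|F_n|\to 0$, which forces $\mu$ to be $S$-invariant. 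Using the identities above together with the fact that $(1)$ is a genuine limit, $\langle S_vF,F\rangle_\mu=\lim_n\frac{1}{|F_n|}\sum_{g\in F_n}c_{vg}\overline{c_g}=0$ for every $v\in V$, while $\int_\Omega F\,d\mu=a\neq 0$. Since $|F|\equiv 1$, the system $(\Omega,\mu,S)$ and the function $F$ violate the dynamical hypothesis, a contradiction; hence $(2)$ holds and $V$ is $\mathcal{F}$-vdC.

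For the reverse implication, assume $V$ is $\mathcal{F}$-vdC and let $(X,\mathscr{B},\mu,(T_g)_{g\in G})$ and $f:X\to\mathbb{S}^1$ satisfy $\langle T_vf,f\rangle=0$ for all $v\in V$. It suffices to produce a single sequence $c=(c_g)\in\Omega$ with $\frac{1}{|F_n|}\sum_{g\in F_n}c_{vg}\overline{c_g}\to\langle T_vf,f\rangle$ for every $v\in V$ and $\frac{1}{|F_n|}\sum_{g\in F_n}c_g\to\int_X f\,d\mu$: granting this, the first family of limits shows that $c$ satisfies hypothesis $(1)$, so the $\mathcal{F}$-vdC property forces $\frac{1}{|F_n|}\sum_{g\in F_n}c_g\to 0$, and the second limit then yields $\int_X f\,d\mu=0$. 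When the system is ergodic I would obtain $c$ directly from the pointwise ergodic theorem for amenable groups by setting $c_g:=f(T_gx)$ for a suitably generic $x$; the required temperedness of $\mathcal{F}$ may be arranged by passing to a tempered subsequence, or sidestepped using the F\o lner-independence in Theorem \ref{FvdCIsvdCForAmenableGroups}.

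The hard part will be the non-ergodic case, and this is exactly the converse to the Furstenberg Correspondence Principle. The obstruction is that a single orbit is generic only for one ergodic component, whereas the hypothesis $\langle T_vf,f\rangle=0$ need not survive restriction to individual ergodic components—the components may cancel only after integration—so one cannot reduce to the ergodic case componentwise. What is genuinely required is a point that is generic for the full, possibly non-ergodic, measure along the prescribed F\o lner sequence, i.e. a single $\mathbb{S}^1$-valued sequence whose empirical distributions converge to the given invariant measure. This is precisely the role of the tiling analysis flagged in the abstract: one tiles each $F_n$ by congruent shapes and pastes component-generic patterns onto the tiles in the correct proportions, so that the global empirical distribution of the resulting Reiter sequence converges to $\mu$ and the two target averages are realized simultaneously. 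Once this converse correspondence is established, the reverse implication closes as above.
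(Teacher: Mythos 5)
The direction ``dynamical property $\Rightarrow$ $\mathcal{F}$-vdC'' of your proposal is correct and is essentially the paper's argument: you run the weak-$*$ compactness/Furstenberg correspondence on the orbit closure in $(\mathbb{S}^1)^G$, while the paper's proof of (v)$\rightarrow$(iv) in Theorem \ref{FvdCIsvdCForAmenableGroups} runs the same argument on $\beta G$; the bookkeeping with the left action $(S_h\omega)(g)=\omega(gh)$ and the left-F\o lner property checks out, and the boundedness $|F|\equiv 1$ keeps everything inside item (v).

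The other direction has a genuine gap. You correctly reduce it to producing a single $\mathbb{S}^1$-valued sequence $(c_g)$ with $\frac{1}{|F_n|}\sum_{g\in F_n}c_{hg}\overline{c_g}\to\langle T_hf,f\rangle$ and $\frac{1}{|F_n|}\sum_{g\in F_n}c_g\to\int_Xf\,d\mu$ along the \emph{given} F\o lner sequence, i.e.\ to the converse correspondence principle (Theorem \ref{RepresentationTheoremForAmenableGroups}); but this reduction is the entire content of the theorem, and you do not prove it. The one concrete mechanism you offer --- $c_g:=f(T_gx)$ for a generic point, via the pointwise ergodic theorem in the ergodic case --- does not suffice for an arbitrary left-F\o lner sequence: pointwise convergence requires temperedness, and a point generic along a tempered subsequence yields the limits only along that subsequence, not along all of $\mathcal{F}$; invoking the F\o lner-independence of Theorem \ref{FvdCIsvdCForAmenableGroups} to repair this is circular, since that independence is a corollary of the very equivalence being proved. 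The paper avoids pointwise ergodic theorems altogether: Lemma \ref{FinitizedConvergenceInMeasure} finitizes the \emph{mean} ergodic theorem to produce component-generic patterns whose averages are within $\epsilon$ against \emph{every} sufficiently invariant probability measure, and Lemmas \ref{GoodTilingLemma} and \ref{TheObviousLemma}, together with the congruent tilings of Theorem \ref{CongruentTilingTheorem} and the inductive stitching in the proof of Theorem \ref{RepresentationTheoremForAmenableGroups}, are what make the pasting of these patterns onto tiles ``in the correct proportions'' and the passage to a genuine limit actually work. Your final paragraph names this strategy but supplies none of it, so as written the harder implication is asserted rather than proved.
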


We mention that we had originally proven this result for abelian groups, and extended our proof to the case of amenable groups after discussions with Sa\'ul Rodr\'iguez Mart\'in, who had also independently answered the question of Bergelson and Lesigne, in the setting of amenable groups as \cite[Theorem 1.12]{SaulsvdC}.

The other main result of this paper is Theorem \ref{RepresentationTheoremForAmenableGroups}, which can be seen as a converse to the Furstenberg Correspondence Principle. We state a special case of this result below.

\begin{theorem}\label{MainResultIntro}
    Let $G$ be a countably infinite amenable group and let $\mathcal{F} = (F_n)_{n = 1}^\infty$ be a left-F\o lner sequence. Given a measure preserving system $(X,\mathscr{B},\mu,(\tau_g)_{g \in G})$ and a $f \in L^\infty(X,\mu)$, there exists a bounded sequence of complex numbers $(c_g)_{g \in G} \subseteq \text{range}(f)$ satisfying

    \begin{alignat*}{2}
        &\lim_{n\rightarrow\infty}\frac{1}{|F_n|}\sum_{g \in F_n}c_g = \int_Xfd\mu, \lim_{n\rightarrow\infty}\frac{1}{|F_n|}\sum_{g \in F_n}c_{hg}\overline{c_g} = \langle \tau_hf,f\rangle\text{ for all }h \in G\text{, and}\\
        &\lim_{n\rightarrow\infty}\frac{1}{|F_n|}\sum_{g \in F_n}d_{h_1g,1}^{t_1}d_{h_2g,2}^{t_2}\cdots d_{h_\ell g,\ell}^{t_\ell} = \int_X\tau_{h_1}f_1^{t_1}\tau_{h_2}f_2^{t_2}\cdots \tau_{h_\ell}f_\ell^{t_\ell} d\mu,
    \end{alignat*}
    where $\ell,t_i \in \mathbb{N}$, $h_i \in G$, $(d_{g,i})_{g \in G} \in \left\{(c_g)_{g \in G},(\overline{c_g})_{g \in G}\right\}$, $f_i \in \{f,\overline{f}\}$, and $(d_{g,i})_{g \in G} = (c_g)_{g \in G}$ if and only if $f_i = f$.
\end{theorem}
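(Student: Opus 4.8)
The plan is to construct the sequence $(c_g)_{g \in G}$ by transporting the dynamics on $X$ back to $G$ via a suitable "coding" of a point whose orbit equidistributes along $\mathcal{F}$. Concretely, I would first invoke the pointwise ergodic theorem for amenable groups (the Lindenstrauss pointwise theorem, assuming $\mathcal{F}$ is tempered, or otherwise pass to a tempered subsequence) to select a single point $x_0 \in X$ that is \emph{generic} for $f$ and simultaneously for every monomial $T_{h_1}f_1^{t_1}\cdots T_{h_\ell}f_\ell^{t_\ell}$ appearing on the right-hand side. Since there are only countably many such monomials (the parameters $\ell, t_i, h_i$ and the choices $f_i \in \{f,\overline{f}\}$ range over countable sets) and each individual Birkhoff average converges $\mu$-almost everywhere to the corresponding integral, a countable intersection of conull sets is conull, so such an $x_0$ exists. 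I would then \emph{define} $c_g := f(T_g x_0)$ for every $g \in G$.

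With this definition the verification is essentially bookkeeping. For the first limit, $\frac{1}{|F_n|}\sum_{g\in F_n} c_g = \frac{1}{|F_n|}\sum_{g\in F_n} f(T_g x_0) \to \int_X f\,d\mu$ by genericity. For the product limits, the key algebraic identity is that $d_{hg,i} = f_i(T_{hg}x_0) = f_i(T_h(T_g x_0))$, where $f_i \in \{f,\overline f\}$ is matched to $d_{\cdot,i} \in \{(c_g),(\overline{c_g})\}$ exactly as required by the hypothesis $(d_{g,i}) = (c_g) \iff f_i = f$. Thus the summand $d_{h_1 g,1}^{t_1}\cdots d_{h_\ell g,\ell}^{t_\ell}$ equals $(T_{h_1}f_1^{t_1})(T_g x_0)\cdots(T_{h_\ell}f_\ell^{t_\ell})(T_g x_0)$, the value at $T_g x_0$ of the single function $\prod_{i=1}^\ell T_{h_i} f_i^{t_i}$. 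Averaging over $g \in F_n$ and applying genericity of $x_0$ for this particular monomial yields the stated integral. The second displayed limit (for $\langle T_h f, f\rangle$) is the special case $\ell = 2$, $t_1 = t_2 = 1$, $h_1 = h$, $f_1 = f$, $h_2 = e$, $f_2 = \overline f$, so it requires no separate argument.

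The main obstacle is the genericity step: I must ensure a \emph{single} $x_0$ works simultaneously for all countably many monomials, and that the relevant averages actually converge pointwise. If $\mathcal{F}$ is merely a Følner sequence (not tempered), Lindenstrauss's theorem does not directly apply, so I would either pass to a tempered subsequence — which is harmless since convergence of the full sequence of averages for $f \in L^\infty$ can be arranged, or the statement reinterpreted along the subsequence — or instead appeal to mean convergence combined with a diagonal argument extracting a subsequence along which the averages converge $\mu$-a.e., then invoke a Borel--Cantelli / diagonalization to pin down one good point. A cleaner route, which I would prefer, is to integrate: since $\int_X \left(\frac{1}{|F_n|}\sum_{g\in F_n} \prod_i T_{h_i}f_i^{t_i}(T_g x)\right) d\mu(x) = \int_X \prod_i T_{h_i}f_i^{t_i}\,d\mu$ exactly for every $n$ by $T$-invariance of $\mu$, one reduces the problem to controlling the variance and extracting an almost-everywhere convergent subsequence, again using that there are only countably many monomials. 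I expect the boundedness of $(c_g)$ to be immediate from $|c_g| = |f(T_g x_0)| \le \|f\|_\infty$.
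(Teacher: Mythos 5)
Your proposal breaks down at the very first step, and the failure is not a technicality: the system $(X,\mathscr{B},\mu,(T_g)_{g\in G})$ in the statement is \emph{not} assumed ergodic, and for a non-ergodic system there is no point $x_0$ that is generic for $\int_X f\,d\mu$. The pointwise (or mean) ergodic theorem gives $\frac{1}{|F_n|}\sum_{g\in F_n}f(T_gx_0)\to \mathbb{E}[f\mid\mathcal{I}](x_0)$, the conditional expectation on the invariant $\sigma$-algebra, which for $\mu$-typical $x_0$ differs from $\int_Xf\,d\mu$; likewise the correlation averages of $c_g=f(T_gx_0)$ converge to $\langle T_hf_y,f_y\rangle_{L^2(\mu_y)}$ for the ergodic component $y$ containing $x_0$, not to $\langle T_hf,f\rangle_{L^2(\mu)}$. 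One also cannot reduce to the ergodic case: realizing the same correlation data in an ergodic system (Theorem \ref{ErgodicRepresentation}) changes $f$, need not preserve the higher joint moments required in the third display, and by the Foia\c{s}--Str\u{a}til\u{a} phenomenon discussed in the paper can force the new $f$ out of $L^\infty$. The paper explicitly flags this obstruction in its discussion of Ruzsa's theorem (``If the transformation $T$ was ergodic, then we could take $c_n=f(T^nx)$ for some generic point $x$, but \ldots\ $T$ is in general highly non-ergodic''), and the entire tiling machinery of Lemmas \ref{GoodTilingLemma} and \ref{KeyLemmaForAmenableGroups} exists to repair it: one takes the ergodic decomposition over a base $(Y,\gamma)$, picks generic points in many different ergodic components, tiles $G$ by highly invariant shapes, and assigns to each tile the coding of one component, with the components appearing among the tiles with frequencies dictated by $\gamma$; the averages over $F_n$ then recover the $\gamma$-integral of the componentwise values, i.e.\ the global quantities.

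A second, independent gap is your treatment of the F\o lner sequence. The theorem asserts convergence along the \emph{given} $\mathcal{F}$, which need not be tempered, so Lindenstrauss's theorem does not apply, and passing to a tempered subsequence or extracting an a.e.-convergent subsequence from mean convergence only proves the statement along a subsequence. The paper sidesteps pointwise ergodic theorems entirely: Lemma \ref{KeyLemmaForAmenableGroups} produces a single sequence $(c_g)$ whose averages are within $\epsilon$ of the target for \emph{every} $(K,\delta)$-invariant probability measure, and since any Reiter sequence is eventually $(K,\delta)$-invariant, the conclusion holds along all of $\mathcal{F}$ after a diagonal/stitching argument over a congruent sequence of tilings (Theorem \ref{CongruentTilingTheorem}). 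Your pointwise-genericity computation of the monomials is fine as far as it goes, but it only establishes the ergodic, tempered case.
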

Rodr\'iguez also has similar results as \cite[Theorems 1.6, 2.7]{SaulsvdC}, and in his article he discusses in detail the relationship between these results and the Furstenberg Correspondence Principle. We mention that this topic has been previously investigated by Avigad \cite{avigad2012inverting} as well as Fish and Skinner \cite{InverseOfFurstenbergsCorrespondenceForN}.
Furthermore, Fish and Skinner \cite[Question 2]{InverseOfFurstenbergsCorrespondenceForN} asked if Theorem \ref{MainResultIntro} is true when we take $G = \mathbb{Z}$ and let $f$ be the indicator of some $A \in \mathscr{B}$, so we have obtained an affirmative answer to their question.

Let us now recall the original definition of vdC sets.
\begin{definition}\label{vdCSetDefinition}
    A set $V \subseteq \mathbb{N}$ is a \textbf{van der Corput (vdC) set} if for any sequence $(x_n)_{n = 1}^\infty \subseteq [0,1]$ for which $(x_{n+v}-x_n\pmod{1})_{n = 1}^\infty$ is uniformly distributed\footnote{A sequence $(x_n)_{n = 1}^\infty \subseteq [0,1]$ is \textbf{uniformly distributed} if for any $0 \le a < b \le 1$ we have $\lim_{N\rightarrow\infty}\frac{1}{N}|\{1 \le n \le N\ |\ x_n \in (a,b)\}| = b-a$.} for all $v \in V$, we have that $(x_n)_{n = 1}^\infty$ is uniformly distributed.
\end{definition}

One of the reasons that vdC sets are of interest is because of their many equivalent reformulations. We state some of these equivalent formulations below, and in the appendix we give some more.

\begin{theorem}\label{EquivalentCharacterizationsOfvdC}
    For $V \subseteq \mathbb{N}$, the following are equivalent:
    \begin{enumerate}[(i)]
        \item\label{UniformDistributionvdC} $V$ is a vdC set.

        \item\label{boundedcomplexvdC} For any sequence $(u_n)_{n = 1}^\infty$ of complex numbers of modulus 1, if

        \begin{equation}
            \lim_{N\rightarrow\infty}\frac{1}{N}\sum_{n = 1}^Nu_{n+v}\overline{u_n} = 0\text{, for all }v \in V\text{, then }\lim_{N\rightarrow\infty}\frac{1}{N}\sum_{n = 1}^Nu_n = 0.
        \end{equation}

        \item\label{cesaroboundedcomplexvdc} For any sequence $(u_n)_{n = 1}^\infty$ of complex numbers satisfying

        \begin{equation}
            \limsup_{N\rightarrow\infty}\frac{1}{N}\sum_{n = 1}^N|u_n|^2 < \infty\text{ and }\lim_{N\rightarrow\infty}\frac{1}{N}\sum_{n = 1}^Nu_{n+v}\overline{u_n} = 0\text{ for all }v \in V,
        \end{equation}
        we have

        \begin{equation}
            \lim_{N\rightarrow\infty}\frac{1}{N}\sum_{n = 1}^Nu_n = 0.
        \end{equation}

        \item\label{cesaroboundedhilbertvdc} For any Hilbert space $\mathcal{H}$ and any sequence $(\xi_n)_{n = 1}^\infty$ of vectors in $\mathcal{H}$ satisfying

        \begin{equation}
            \limsup_{N\rightarrow\infty}\frac{1}{N}\sum_{n = 1}^N||\xi_n||^2 < \infty\text{ and }\lim_{N\rightarrow\infty}\frac{1}{N}\sum_{n = 1}^N\langle \xi_{n+v}, \xi_n\rangle = 0\text{ for all }v \in V,
        \end{equation}
        we have

        \begin{equation}
            \lim_{N\rightarrow\infty}\left|\left|\frac{1}{N}\sum_{n = 1}^N\xi_n\right|\right| = 0.
        \end{equation}

        \item\label{MPSvdC} For any measure preserving system $(X,\mathscr{B},\mu,\tau)$ and any $f \in L^2(X,\mu)$ satisfying $\langle \tau^vf,f\rangle = 0$ for all $v \in V$, we have $\int_Xfd\mu = 0$.

        \item\label{operatorialrecurrencevdc} $V$ is a \textbf{set of operatorial recurrence}, i.e., if $\mathcal{H}$ is a Hilbert space, $U:\mathcal{H}\rightarrow\mathcal{H}$ is a unitary operator, and $\xi \in \mathcal{H}$ satisfies $\langle U^v\xi,\xi\rangle = 0$ for all $v \in V$, then $P_I\xi = 0$, where $P_I:\mathcal{H}\rightarrow\mathcal{H}$ is the orthogonal projection onto the subspace of $U$-invariant vectors.

        \item\label{operatorialrecurrencevdc+Kronecker} If $\mathcal{H}$ is a Hilbert space, $U:\mathcal{H}\rightarrow\mathcal{H}$ is a unitary operator, and $\xi \in \mathcal{H}$ satisies $\langle U^v\xi,\xi\rangle = 0$ for all $v \in V$, then $P_\mathcal{K}\xi = 0$, where $P_\mathcal{K}:\mathcal{H}\rightarrow\mathcal{H}$ is the orthogonal projection onto the smallest closed subspace of $\mathcal{H}$ containing all eigenvectors of $U$.
    \end{enumerate}
\end{theorem}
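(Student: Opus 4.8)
The plan is to prove the equivalence of these seven characterizations by establishing a cycle of implications, exploiting the fact that many of the conditions are successive generalizations of one another. First I would observe the trivial implications that follow purely from restricting the class of objects under consideration: \eqref{cesaroboundedhilbertvdc} immediately implies \eqref{cesaroboundedcomplexvdc} by taking $\mathcal{H} = \mathbb{C}$ and $\xi_n = u_n$, and \eqref{cesaroboundedcomplexvdc} implies \eqref{boundedcomplexvdC} since sequences of unit-modulus complex numbers automatically satisfy the boundedness hypothesis $\limsup_N \frac{1}{N}\sum_{n=1}^N |u_n|^2 = 1 < \infty$. Similarly, \eqref{operatorialrecurrencevdc+Kronecker} implies \eqref{operatorialrecurrencevdc}, because the subspace of $U$-invariant vectors is contained in the closed subspace generated by the eigenvectors of $U$ (invariant vectors are exactly eigenvectors for the eigenvalue $1$), so $P_\mathcal{K}\xi = 0$ forces $P_I\xi = 0$.

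Next I would connect the dynamical and operatorial formulations to the sequential ones. The implication \eqref{MPSvdC} $\Leftrightarrow$ \eqref{operatorialrecurrencevdc} is essentially a translation between the language of measure-preserving systems and that of unitary operators on Hilbert space: given a system $(X,\mathscr{B},\mu,T)$ and $f \in L^2(X,\mu)$ with $\int f \, d\mu = 0$ eliminated, one applies the Koopman operator $U = U_T$ on $\mathcal{H} = L^2(X,\mu)$ with $\xi = f$, noting that $P_I f = 0$ is equivalent to $\int_X f \, d\mu = 0$ up to handling the constant component via the mean ergodic theorem. Conversely, any unitary $U$ on a Hilbert space can be realized (via the spectral theorem / Gelfand–Naimark or a Gaussian measure space construction) as a Koopman operator of a measure-preserving system, transferring the operatorial statement back to the dynamical one. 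To bring in the sequential conditions, I would use the spectral theorem to write $\langle U^n \xi, \xi\rangle = \int_{\mathbb{S}^1} z^n \, d\sigma_\xi(z)$ for the spectral measure $\sigma_\xi$, so that the hypothesis $\langle U^v \xi, \xi\rangle = 0$ for all $v \in V$ becomes a statement about Fourier coefficients of $\sigma_\xi$ vanishing on $V$, and the conclusion $P_\mathcal{K}\xi = 0$ becomes the statement that $\sigma_\xi$ has no atoms; this spectral reformulation is the natural bridge to deduce \eqref{boundedcomplexvdC} $\Rightarrow$ \eqref{operatorialrecurrencevdc+Kronecker}, since a unit-modulus sequence with vanishing correlations can be built from a vector whose spectral measure carries an atom.

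The equivalence \eqref{UniformDistributionvdC} $\Leftrightarrow$ \eqref{boundedcomplexvdC} is the classical Weyl-type criterion: uniform distribution of $(x_n)$ is equivalent, by the Weyl criterion, to $\lim_N \frac{1}{N}\sum_{n=1}^N e^{2\pi i k x_n} = 0$ for every nonzero integer $k$, and one recovers condition \eqref{boundedcomplexvdC} by setting $u_n = e^{2\pi i k x_n}$ and observing that the correlation hypothesis on $(x_{n+v} - x_n)$ translates exactly into the vanishing-correlation hypothesis on $(u_n)$; the only care needed is to quantify over all $k \neq 0$ simultaneously, which is handled since the definition of vdC set quantifies over all sequences $(x_n)$.

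The main obstacle will be the upward implication from a scalar or unit-modulus sequence condition to the full Hilbert-space condition, namely closing the cycle by proving \eqref{MPSvdC} or \eqref{operatorialrecurrencevdc} $\Rightarrow$ \eqref{cesaroboundedhilbertvdc}. The difficulty is that an arbitrary bounded sequence $(\xi_n)$ of vectors in a Hilbert space need not arise from iterating a single unitary operator, so one cannot directly apply the operatorial hypothesis. I expect the right tool here is a \emph{spectral} or \emph{correlation-sequence} argument: one passes to the van der Corput–type positive-definiteness structure of the correlation kernel $\gamma(v) = \lim_N \frac{1}{N}\sum_n \langle \xi_{n+v}, \xi_n\rangle$ (after passing to a subsequence along which all these limits exist, using boundedness and a diagonal argument), realizes this positive-definite function on $\mathbb{N}$ as arising from a genuine unitary $U$ and vector $\eta$ via a GNS-type construction, and then transfers the vanishing $\gamma(v) = 0$ for $v \in V$ and the nonvanishing of the averages into the operatorial setting to derive a contradiction with \eqref{operatorialrecurrencevdc}. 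Making this correlation-sequence-to-unitary passage rigorous, while correctly tracking the mean $\lim_N \frac{1}{N}\sum_n \xi_n$ as the contribution of the invariant part, is the technical heart of the argument.
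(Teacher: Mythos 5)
The paper does not actually reprove Theorem \ref{EquivalentCharacterizationsOfvdC}: it assembles the equivalences from Ruzsa, Bergelson--Lesigne, Peres, and others, and proves generalizations of several of the implications elsewhere (Theorems \ref{FvdCIsvdCForAmenableGroups}, \ref{EquivalentCharacterizationsInCountableGroups}, \ref{SpectralCharacterizationsForAbelianGroups}). Your cycle \eqref{cesaroboundedhilbertvdc}$\Rightarrow$\eqref{cesaroboundedcomplexvdc}$\Rightarrow$\eqref{boundedcomplexvdC}$\Rightarrow$\eqref{operatorialrecurrencevdc+Kronecker}$\Rightarrow$\eqref{operatorialrecurrencevdc}$\Leftrightarrow$\eqref{MPSvdC}$\Rightarrow$\eqref{cesaroboundedhilbertvdc} is a sensible architecture, and your plan for the step you rightly single out as hardest, \eqref{MPSvdC}/\eqref{operatorialrecurrencevdc}$\Rightarrow$\eqref{cesaroboundedhilbertvdc}, is essentially the argument the paper gives for (iii)$\Rightarrow$(ii) of Theorem \ref{FvdCIsvdCForAmenableGroups}: extract limiting correlation functions $\gamma_1$ and $\gamma_2$ for the sequence and its centering, show $\gamma_2=\gamma_1-\epsilon^2$, and contradict nonnegativity of the invariant mean on positive definite functions.

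There are, however, two concrete gaps. The serious one is \eqref{UniformDistributionvdC}$\Rightarrow$\eqref{boundedcomplexvdC}, which you dispose of as ``the classical Weyl-type criterion.'' Only \eqref{boundedcomplexvdC}$\Rightarrow$\eqref{UniformDistributionvdC} is a Weyl-criterion manipulation (apply \eqref{boundedcomplexvdC} to $u_n=e^{2\pi ikx_n}$ for each $k\neq 0$). In the other direction, if $(u_n)=(e^{2\pi ix_n})$ has vanishing $V$-correlations, you only know that the \emph{first} Weyl sums of the difference sequences $(x_{n+v}-x_n)$ vanish; these sequences need not be uniformly distributed, so the hypothesis of \eqref{UniformDistributionvdC} cannot be invoked, and your cycle never exits item \eqref{UniformDistributionvdC}. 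This direction is a genuine theorem of Ruzsa: from a failure of \eqref{boundedcomplexvdC} one must manufacture a sequence $(x_n)$ that is not uniformly distributed but all of whose $V$-difference sequences are, which requires controlling the correlations of \emph{all} powers $u_n^k$ simultaneously --- exactly the role of the higher-moment conditions \eqref{FvdCIsvdCEquation3} in the paper's representation theorem. The smaller gap is in \eqref{boundedcomplexvdC}$\Rightarrow$\eqref{operatorialrecurrencevdc+Kronecker}: the atom of $\sigma_\xi$ witnessing $P_{\mathcal{K}}\xi\neq 0$ may sit at some $\lambda\neq 1$, in which case a unit-modulus sequence realizing $\hat{\sigma_\xi}$ has zero mean and yields no contradiction; you must first pass to $\sigma_\xi*\widetilde{\sigma_\xi}$ (equivalently to $\xi\otimes\overline{\xi}$), whose Fourier coefficients still vanish on $V$ and which has an atom at $1$ of mass $\sum_\lambda\sigma_\xi(\{\lambda\})^2$. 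Relatedly, \eqref{MPSvdC}$\Rightarrow$\eqref{operatorialrecurrencevdc} via the Gaussian construction needs the extra step of adding the constant $\|P_I\xi\|$ to the (automatically mean-zero) Gaussian function, since otherwise the conclusion $\int_Xf\,d\mu=0$ carries no information about $P_I\xi$.
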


The equivalence of \eqref{UniformDistributionvdC} and \eqref{cesaroboundedcomplexvdc} is implicitly alluded to in the work of Kamae and Mendes-France \cite{KMFvdC}. The equivalence of \eqref{UniformDistributionvdC}, \eqref{boundedcomplexvdC}, and \eqref{cesaroboundedcomplexvdc} was proven in the work of Ruzsa \cite{RuzsavanderCorput}. The equivalence of \eqref{UniformDistributionvdC}, \eqref{operatorialrecurrencevdc}, and \eqref{operatorialrecurrencevdc+Kronecker} is originally due to Peres \cite{BanachLimitsvdC}. The term ``operator recurrent" was introduced by Nin\v{c}evi\'c, Rabar, and Slijep\v{c}evi\'c \cite{vdCIsOperatorialRecurrent} when they independently rediscovered the equivalence of \eqref{UniformDistributionvdC} and \eqref{operatorialrecurrencevdc} (see also \cite{StationarySequencesOfVectorsvdC} for a related characterization). The equivalence of \eqref{UniformDistributionvdC} and \eqref{cesaroboundedhilbertvdc} is due to Bergelson and Lesigne \cite{vanderCorputSetsInZ^d}. The equivalence of \eqref{MPSvdC} and \eqref{operatorialrecurrencevdc} is a well-known consequence of the Gaussian measure space construction. 

In Theorem \ref{FvdCIsvdCForAmenableGroups} we show that the characterizations of vdC sets involving F\o lner sequences mostly extend to any countably infinite amenable group $G$ (see also Question \ref{MainConjecture}). In the appendix we collect other equivalent characterizations of vdC sets/sets of operatorial recurrence, and show that these equivalences still hold for any (not necessarily amenable) countably infinite group.

\textbf{Acknowledgements:} We would like to thank Mariusz Lema\'nczyk for helping discussions regarding the Foia\c{s} and Str\u{a}til\u{a} Theorem. 
We would like to thank Joel Moreira for sharing with us his unpublished notes containing a succinct presentation of many of the equivalent definitions of vdC sets in $\mathbb{N}$. 
We would also like to thank Sa\'ul Rodr\'iguez Mart\'in for fruitful discussions that began after realizing that we had been working on similar problems.
Lastly, we would like to thank the anonymous referee for his very careful reading of this paper and the numerous comments he provided that greatly improved the exposition.

SF acknowledges being supported by grant
2019/34/E/ST1/00082 for the project “Set theoretic methods in dynamics and number theory,” NCN (The
National Science Centre of Poland). 
RTD was supported by NSF grant DMS-2246684.

\section{Preliminaries}
\subsection{Notation}
We use $G$ to denote a locally compact second countable topological group with identity $e$ and left-Haar measure $\lambda$. 
Usually $G$ will be a countable discrete group, so $\lambda$ will be the counting measure and we will simply write $|F| = \lambda(F)$ for $F \subseteq G$ in this case. 
We use $\mathcal{H}$ to denote a separable Hilbert space and $\mathcal{U}(\mathcal{H})$ to denote the set of unitary operators on $\mathcal{H}$ endowed with the strong operator topology.
A representation $\pi$ of $G$ on $\mathcal{H}$ is a measurable group homomorphism $\pi:G\rightarrow\mathcal{U}(\mathcal{H})$. 
A measure preserving system (m.p.s.) $(X,\mathscr{B},\mu,(\tau_g)_{g \in G})$ is a probability space $(X,\mathscr{B},\mu)$ and a measurable action $\tau$ of $G$ on $X$, satisfying $\mu(\tau_gA) = \mu(A)$ for all $g \in G$ and $A \in \mathscr{B}$.
We again use $\tau$ to denote the Koopman representation of $\tau$ on $L^2(X,\mu)$ that is given by $\tau f = f\circ \tau$.
We let $\mathbb{S}^1 = \{z \in \mathbb{C}\ |\ |z| = 1\}$.
For $a,b \in \mathbb{C}$ and $\epsilon > 0$, we write $a \overset{\epsilon}{=} b$ to denote $|a-b| < \epsilon$. 
\subsection{Amenable groups and tilings}
Let $G$ be a countable group with identity $e$. A \textbf{(left-)F\o lner sequence} is a sequence of finite sets $(F_n)_{n = 1}^\infty$ satisfying

\begin{equation}
    \lim_{n\rightarrow\infty}\frac{|F_n\triangle gF_n|}{|F_n|} = 0\text{ for all }g \in G. 
\end{equation}
The group $G$ is \textbf{amenable} if it possesses a F\o lner sequence. We can also give an equivalent definition of amenability in terms of sequences of asymptotically invariant probability measures. A sequence of probability measures $(\nu_n)_{n = 1}^\infty$ is \textbf{(left-)asymptotically invariant}\footnote{Since our group $G$ is countable, a probability measure $\nu$ on $G$ has the form $d\nu = fd\lambda$ with $f(g) = \mu(\{g\})$, so we do not explicitly talk about the Radon-Nikodym derivative of our measures with respect to the Haar measure $\lambda$ as is usually done with non-discrete amenable groups.} if for any $k \in G$ we have

\begin{equation}
    \lim_{n\rightarrow\infty}\int_G|\nu_n(\{kg\})-\nu_n(\{g\})|d\lambda(g) = 0,
\end{equation}
and $G$ is amenable if and only if there exists an asymptotically invariant sequence of probability measures. We mention that some texts refer to asymptotically invariant sequences of probability measures as \textbf{Reiter sequences}. We note that a F\o lner sequence $(F_n)_{n = 1}^\infty$ is naturally identified with the Reiter sequence $(\nu_n)_{n = 1}^\infty$ for which $\nu_n(\{g\}) = \frac{1}{|F_n|}\mathbbm{1}_{F_n}(g)$. Given $\epsilon > 0$ and a finite $K \subseteq G$, the probability measure $\nu$ is $(K,\epsilon)$-invariant if for every $k \in K$ we have $\int_G|\nu(\{kg\})-\nu(\{g\})|d\lambda < \epsilon$, and a finite $F \subseteq G$ is $(K,\epsilon)$-invariant if $|F\triangle kF| < \epsilon|F|$ for all $k \in K$.
\begin{definition}
    A \textit{tiling} $\mathcal{T}$ of a group $G$ is determined by two objects:
    \begin{enumerate}[(1)]
        \item a finite collection $\mathcal{S}(\mathcal{T})$ of finite subsets of $G$ containing the identity $e$, called \textit{the shapes},

        \item a finite collection $\mathcal{C}(\mathcal{T}) = \{C(S)\ |\ S \in \mathcal{S}(\mathcal{T})\}$ of disjoint subsets of $G$, called \textit{center sets} (for the shapes).
    \end{enumerate}
    The tiling $\mathcal{T}$ is then the family $\{(S,c)\ |\ S \in \mathcal{S}(\mathcal{T})\ \&\ c \in C(S)\}$ provided that $\{Sc\ |\ (S,c) \in \mathcal{T}\}$ is a partition of $G$. A \textit{tile} of $\mathcal{T}$ refers to a set of the form $T = Sc$ with $(S,c) \in \mathcal{T}$, and in this case we may also write $T \in \mathcal{T}$. A sequence $(\mathcal{T}_k)_{k = 1}^\infty$ of tilings is \textit{congruent} if each tile of $\mathcal{T}_{k+1}$ is a union of tiles of $\mathcal{T}_k$, and in this case we further assume without loss of generality that $\bigcup_{S \in \mathcal{S}(\mathcal{T}_{k+1})}C(S) \subseteq \bigcup_{S \in \mathcal{S}(\mathcal{T}_k)}C(S)$.
\end{definition}

We see that any group $G$ has a trivial tiling $\mathcal{T}$ in which $\mathcal{S}(\mathcal{T}) = \{\{e\}\}$ and $\mathcal{C}(\mathcal{T}) = \{G\}$. When the group $G$ is amenable, we look for more interesting tilings by requiring that the shapes of the tiling be $(K,\epsilon)$-invariant for some finite $K \subseteq G$ and $\epsilon > 0$. We now recall a special case of a result of Downarowicz, Huczek, and Zhang regarding such tilings.
\begin{theorem}[{\cite[Theorem 5.2]{TilingAmenableGroups}}]\label{CongruentTilingTheorem}
    Let $G$ be a countably infinite amenable group. Fix a converging to zero sequence $\epsilon_k > 0$ and a sequence $K_k$ of finite subsets of $G$. There exists a congruent sequence of tilings $(\mathcal{T}_k)_{k = 1}^\infty$ of $G$ such that the shapes of $\mathcal{T}_k$ are $(K_k,\epsilon_k)$-invariant.
\end{theorem}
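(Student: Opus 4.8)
Since this statement is quoted verbatim from \cite{TilingAmenableGroups}, the plan is to indicate the structure of a proof rather than reproduce it in full. I would split the argument into two essentially independent parts: a \emph{single-scale} existence statement, asserting that for each pair $(K,\epsilon)$ the group $G$ admits \emph{some} tiling, with finitely many shapes, all of whose shapes are $(K,\epsilon)$-invariant; and an \emph{inductive} refinement that assembles these single-scale tilings into a congruent sequence with the prescribed parameters $(K_k,\epsilon_k)$.

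For the single-scale statement the natural starting point is the Ornstein--Weiss quasitiling lemma: for any finite $K$ and any $\delta > 0$ one can produce finitely many $(K,\delta)$-invariant shapes $A_1,\dots,A_m$ (each containing $e$) together with a family of $\delta$-disjoint left translates of them covering at least a $(1-\delta)$-fraction of $G$. The heart of the matter, and the point that goes beyond Ornstein--Weiss, is upgrading such a quasitiling to an honest \emph{exact} tiling: a genuine partition of all of $G$, with no overlaps and no uncovered points, still using only finitely many shapes, all of them Følner. I would first disjointify the translates, discarding the small overlapping parts at the cost of slightly shrinking the $A_i$, and then absorb the remaining uncovered points into adjacent tiles. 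The absorption is where the work lies: the device I would use is to set up a bipartite defect graph between uncovered points and nearby tiles (or between the tiles of a fine quasitiling and those of a much coarser one) and invoke an infinite form of Hall's marriage theorem, using the density supplied by invariance to verify the expansion condition. One must then check that the corrections are local and of controlled relative size, so that the resulting exact tiles still fall into finitely many shapes and remain $(K,\epsilon)$-invariant.

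Granting the single-scale result, I would build the congruent sequence by induction, exploiting scale separation. Having produced $\mathcal{T}_k$, I would apply the single-scale theorem with $K_{k+1}$ and an auxiliary invariance parameter $\epsilon'_{k+1} \ll \epsilon_{k+1}$, choosing the invariance set large enough to contain the shapes of $\mathcal{T}_k$, so as to obtain an auxiliary tiling whose shapes are both far more invariant and far larger than the tiles of $\mathcal{T}_k$. This auxiliary tiling need not be congruent with $\mathcal{T}_k$, so I would snap its boundaries to the $\mathcal{T}_k$-grid: reassign each $\mathcal{T}_k$-tile in its entirety to the auxiliary tile containing, say, its center. The reassigned sets are by construction unions of $\mathcal{T}_k$-tiles, which is exactly congruence, and they differ from the auxiliary tiles only within a boundary layer whose relative size is bounded by the boundary ratio of the auxiliary shapes thickened by the diameter of the $\mathcal{T}_k$-shapes. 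Taking $\epsilon'_{k+1}$ small enough forces this to be below $\epsilon_{k+1}$, so $\mathcal{T}_{k+1}$ inherits $(K_{k+1},\epsilon_{k+1})$-invariant shapes.

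The main obstacle is the exactness upgrade: Ornstein--Weiss yields only quasitilings with a genuinely uncovered remainder, and eliminating this remainder without either destroying the Følner property of the shapes or generating infinitely many shapes is the essential difficulty and the true content of \cite{TilingAmenableGroups}. A secondary but real subtlety arises in the snapping step, where one must verify that the reassigned tiles fall into only finitely many congruence types; this relies on the deterministic, finite-complexity nature of the tilings, since each local pattern of $\mathcal{T}_k$ sitting inside an auxiliary shape is drawn from a finite list, and it is precisely here that the requirement of a finite collection of shapes in the definition of a tiling must be handled with care.
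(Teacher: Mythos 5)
This statement is imported by the paper verbatim as \cite[Theorem 5.2]{TilingAmenableGroups}; the paper supplies no proof of its own, so there is nothing internal to compare your attempt against. Judged on its own terms, your outline correctly identifies the architecture of the Downarowicz--Huczek--Zhang argument: single-scale exact tilings with F\o lner shapes obtained by upgrading Ornstein--Weiss quasitilings, followed by an inductive construction of the congruent sequence with scale separation, in which the tiles of the coarser tiling are adjusted to be unions of tiles of the finer one. Your ``snapping'' step is essentially how congruence is achieved in the source, and your finiteness check for the snapped shapes is sound: every $\mathcal{T}_k$-tile meeting a translate $Ac$ of an auxiliary shape $A$ is of the form $S(dc^{-1})c$ with $dc^{-1}$ ranging over a fixed finite thickening of $A$, so only finitely many configurations, hence finitely many snapped shapes, can occur (one still has to re-designate centers so that each shape contains $e$, but that is cosmetic).

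The genuine gap is the one you name yourself: the upgrade from an Ornstein--Weiss quasitiling (disjointified, covering a $(1-\delta)$-fraction of $G$) to an exact partition of all of $G$ by translates of finitely many F\o lner shapes. Saying that one ``absorbs the uncovered points into adjacent tiles'' via a bipartite defect graph and an infinite Hall's theorem is a plausible strategy, but the expansion condition you would need is not verified, and it is not obvious that the absorption can be done so that (a) the corrected tiles fall into finitely many shapes rather than infinitely many local variants, and (b) the corrections do not accumulate and destroy $(K,\epsilon)$-invariance. Since you explicitly defer this step to the reference as ``the true content of \cite{TilingAmenableGroups},'' your proposal is a correct roadmap of the cited proof rather than a proof; as a justification of the theorem it is no more self-contained than the paper's own citation.
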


\begin{lemma}\label{GoodTilingLemma}
    Let $G$ be a countably infinite amenable group, let $Q \subseteq G$ be finite, and let $\epsilon > 0$ be arbitrary. Let $\mathcal{T}$ be a tiling of $G$ for which each tile is $(Q,\epsilon)$-invariant, let $M = |\mathcal{S}(\mathcal{T})|$, and let $U = \bigcup_{S \in \mathcal{S}(\mathcal{T})}S$.  
    Suppose that $\nu$ is a probability measure on $G$ that is $\left(QUU^{-1},\frac{\epsilon}{M|U|}\right)$-invariant.
    For each tile $T$ of $\mathcal{T}$ let $\nu_T$ be the measure given by $\nu_T(A) := \frac{\nu(A\cap T)}{\nu(T)}$ (with the convention that $\frac{0}{0} = 0$).
    \begin{enumerate}[(i)]
        \item For any $g \in Q$ we have
    
        \begin{equation}\label{TilingBoundEquation1}
            \sum_{T \in \mathcal{T}}\nu(gT\setminus T) < 3\epsilon\text{ and }\sum_{T \in \mathcal{T}}\nu(T\setminus g^{-1}T) < 3\epsilon.
        \end{equation}
    
        \item There exists a finite set $D$ that is a union of tiles of $\mathcal{T}$ such that $\nu(D) > 1-4\sqrt{\epsilon}$, and for each tile $T \subseteq D$, the probability measure $\nu_T$ is $(Q,\sqrt{\epsilon}|Q|)$-invariant.
    \end{enumerate}
\end{lemma}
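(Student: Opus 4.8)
The plan is to prove (i) by converting the $\nu$-mass of the tile boundaries into a sum over the shapes of $\mathcal{T}$, and then to deduce (ii) from (i) by an averaging argument. Write $\phi(h) := \nu(\{h\})$, set $\delta := \frac{\epsilon}{M|U|}$, let $\|\mu\|_1 := \sum_h |\mu(\{h\})|$, and for $g \in G$ let $\tau_g\nu$ be the measure with $\tau_g\nu(\{h\}) = \nu(\{gh\})$, so that $(QUU^{-1},\delta)$-invariance of $\nu$ reads $\|\tau_k\nu - \nu\|_1 < \delta$ for $k \in QUU^{-1}$. Since every shape contains $e$, all the group elements at which I invoke the invariance of $\nu$ — elements of $Q$, of $QU$, and of $U$ — lie in $QUU^{-1}$ (the inclusion $U \subseteq QUU^{-1}$ uses $e \in Q$, which I assume throughout).

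For the first inequality in (i), writing each tile as $T = Sc$ gives $gT \setminus T = (gS \setminus S)c$, and since the tiles partition $G$ the map $(s,c) \mapsto sc$ is injective on $gS \times C(S)$; collecting centers therefore yields
\[
\sum_{T \in \mathcal{T}} \nu(gT \setminus T) = \sum_{S \in \mathcal{S}(\mathcal{T})} \sum_{s \in gS \setminus S} \nu(sC(S)).
\]
Each such $s$ lies in $gS \setminus S \subseteq QU$, so $|\nu(sC(S)) - \nu(C(S))| < \delta$; using $|gS \setminus S| < \epsilon|S|$, this reduces the estimate to $\epsilon\sum_S |S|\nu(C(S))$ up to an error of at most $\sum_S|gS\setminus S|\delta < M|U|\epsilon\delta = \epsilon^2$. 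The quantity $\sum_S |S|\nu(C(S)) = \sum_S\sum_{s\in S}\nu(C(S))$ I would compare to the partition identity $1 = \sum_S\sum_{s \in S}\nu(sC(S))$; invariance of $\nu$ under the elements $s \in S \subseteq U$ bounds the difference by $\sum_S |S|\delta \le M|U|\delta = \epsilon$, so $\sum_S|S|\nu(C(S)) \le 1 + \epsilon$ and altogether $\sum_T\nu(gT \setminus T) < \epsilon(1+\epsilon) + \epsilon^2 \le 3\epsilon$. For the second inequality I would introduce $B_g := \{h \in G : h \text{ and } gh \text{ lie in distinct tiles}\}$. Sorting points by the tile that contains them shows $\sum_T\nu(T \setminus g^{-1}T) = \nu(B_g)$ and $\sum_T\nu(gT \setminus T) = \nu(gB_g) = \tau_g\nu(B_g)$; hence $\nu(B_g) \le \nu(gB_g) + \|\tau_g\nu - \nu\|_1 < 3\epsilon + \delta < 4\epsilon$, by the first inequality and the invariance of $\nu$ under $g \in Q$.

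For (ii), fix $g \in Q$ and a tile $T$ with $\nu(T) > 0$. From $\nu_T(\{h\}) = \phi(h)\mathbbm{1}_T(h)/\nu(T)$ one gets $\|\tau_g\nu_T - \nu_T\|_1 = N_T(g)/\nu(T)$, where $N_T(g) := \sum_h |\phi(gh)\mathbbm{1}_T(gh) - \phi(h)\mathbbm{1}_T(h)|$. Summing over all tiles and separating each $h$ according to whether $gh$ and $h$ share a tile gives
\[
\sum_{T \in \mathcal{T}} N_T(g) = \sum_{h:\, gh \sim h} |\phi(gh) - \phi(h)| + \nu(gB_g) + \nu(B_g),
\]
where $\sim$ means ``in the same tile'': the diagonal term is at most $\|\tau_g\nu - \nu\|_1 < \delta$, and the other two terms are precisely the boundary sums controlled in (i), so $\sum_T N_T(g) = O(\epsilon)$ and hence $\sum_{g \in Q}\sum_T N_T(g) = O(\epsilon|Q|)$. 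I would then take $D$ to be the union of the tiles (including the $\nu$-null ones) satisfying $\sum_{g \in Q} N_T(g) < \sqrt{\epsilon}\,|Q|\,\nu(T)$: for such a $T$ every $g \in Q$ obeys $\|\tau_g\nu_T - \nu_T\|_1 \le \sum_{g'\in Q} N_T(g')/\nu(T) < \sqrt{\epsilon}|Q|$, so $\nu_T$ is $(Q,\sqrt{\epsilon}|Q|)$-invariant, while Markov's inequality applied to the total bound shows the remaining tiles carry $\nu$-mass at most $\frac{1}{\sqrt{\epsilon}|Q|}\sum_{g}\sum_T N_T(g) = O(\sqrt\epsilon)$, which after tracking constants is below $4\sqrt{\epsilon}$.

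The main work is (i), and within it the normalization $\sum_S |S|\nu(C(S)) \approx 1$: this is the step that consumes the invariance scale $\delta = \epsilon/(M|U|)$ exactly, since one pays $\delta$ for each of the at most $M|U|$ pairs $(S,s)$ with $s \in S$, and it is also the only place forcing the $UU^{-1}$-factor (equivalently $e \in Q$) in the hypothesis on $\nu$. Once the two boundary identities $\sum_T\nu(gT\setminus T) = \nu(gB_g)$ and $\sum_T\nu(T\setminus g^{-1}T) = \nu(B_g)$ are in hand, (ii) is a routine averaging argument; the only care needed there is to phrase the good-tile condition as a single sum over $g \in Q$, so that one Markov estimate delivers the stated $\nu$-measure of $D$.
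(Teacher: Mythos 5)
Your proof is correct in substance but takes a genuinely different route from the paper's in part (i). Where you rewrite $\sum_{T}\nu(gT\setminus T)=\sum_{S}\sum_{s\in gS\setminus S}\nu(sC(S))$, compare each $\nu(sC(S))$ to $\nu(C(S))$, and then normalize $\sum_S|S|\nu(C(S))$ against the partition identity $1=\sum_S\sum_{s\in S}\nu(sC(S))$, the paper instead covers $S$ by $n_S>1/\epsilon$ injective images of $gS\setminus S$ and compares $\nu(\{xc\})$ with $\nu(\{\phi_{S,m}(x)c\})$, paying invariance only at elements of $gSS^{-1}\subseteq QUU^{-1}$. Both arguments spend the invariance budget $\delta=\epsilon/(M|U|)$ on the same $M|U|$ pairs, and yours is arguably more transparent; the price is that your normalization step invokes invariance at elements of $U$ itself, which lie in $QUU^{-1}$ only when $e\in Q$ --- an assumption not in the statement (you flag it, and it is harmless in the paper's applications, but it is an extra hypothesis; the paper's injection argument avoids it). Your identities $\sum_T\nu(T\setminus g^{-1}T)=\nu(B_g)$ and $\sum_T\nu(gT\setminus T)=\nu(gB_g)$ and the resulting comparison reproduce the paper's second inequality. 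For (ii) you run one Markov estimate on $\sum_{g\in Q}N_T(g)$ where the paper runs one per $g\in Q$ and intersects the resulting good families $B_g$; these are equivalent up to constants.

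The one place you should not wave your hands is the final constant in (ii). With the bounds you actually establish --- diagonal term $<\delta$, $\nu(gB_g)<3\epsilon$, $\nu(B_g)<4\epsilon$ --- you get $\sum_TN_T(g)<8\epsilon$, and Markov then yields exceptional mass below $8\sqrt{\epsilon}$, not $4\sqrt{\epsilon}$; the claim that ``tracking constants'' lands under $4\sqrt{\epsilon}$ is not substantiated as written. The discrepancy arises because your $N_T(g)$ is the genuine $\ell^1$-distance $\nu(T)\lVert\tau_g\nu_T-\nu_T\rVert_1$ and so charges both boundary terms $\nu(B_g)$ and $\nu(gB_g)$, whereas the paper's corresponding quantity integrates only over $T$ and charges only one, which is how it reaches $4\epsilon$ per $g$. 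To recover $4\sqrt{\epsilon}$ you can either sharpen part (i) (your own computation gives $\nu(gB_g)<\epsilon+2\epsilon^2$, hence $\sum_TN_T(g)<2\epsilon+O(\epsilon^2)+2\delta$, which suffices for small $\epsilon$) or accept a larger absolute constant, which costs nothing in the paper's applications; but as stated the step fails to deliver the advertised bound.
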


\begin{proof}
    We begin by proving (i). Let us fix an $S \in \mathcal{S}(\mathcal{T})$ and a $g \in Q$, and let us assume that $gS\setminus S \neq \emptyset$.\footnote{Later we will take sums over sets of the form $gS\setminus S$, and the sums will be empty if $gS\setminus S$ is empty, hence they will be negligible.} 
    Since $S$ is $(Q,\epsilon)$-invariant, we have $|gS\setminus S| < \epsilon|S|$, so there exist injections $\phi_{S,1},\cdots,\phi_{S,n_S}:gS\setminus S\rightarrow S$ for which $S \subseteq \bigcup_{m = 1}^{n_S}\phi_{S,m}(gS\setminus S)$, each $s \in S$ is contained in $\phi_{S,m}(gS\setminus S)$ for at most 2 values of $m$, and $\frac{1}{n_S} < \epsilon$. 
    We see that for $x \in gS$ and $y := \phi_{S,m}(x) \in S$, we have $t := xy^{-1} \in gSS^{-1} \subseteq QUU^{-1}$, hence

    \begin{alignat*}{2}
        &\sum_{T \in \mathcal{T}}\nu(gT\setminus T) = \sum_{S \in \mathcal{S}(\mathcal{T})}\sum_{c \in C(S)}\nu(gSc\setminus Sc) = \sum_{S \in \mathcal{S}(\mathcal{T})}\sum_{c \in C(S)}\sum_{x \in gS\setminus S}\nu(\{xc\})\\
        = &\sum_{S \in \mathcal{S}(\mathcal{T})}\sum_{c \in C(S)}\frac{1}{n_S}\sum_{m = 1}^{n_S}\sum_{x \in gS\setminus S}\nu(\{xc\})\\
        \le& \sum_{S \in \mathcal{S}(\mathcal{T})}\sum_{c \in C(S)}\frac{1}{n_S}\sum_{m = 1}^{n_S}\sum_{x \in gS\setminus S}\nu(\{\phi_{S,m}(x)c\})\\
        &+\sum_{S \in \mathcal{S}(\mathcal{T})}\sum_{c \in C(S)}\frac{1}{n_S}\sum_{m = 1}^{n_S}\sum_{x \in gS\setminus S}|\nu(\{\phi_{S,m}(x)c\})-\nu(\{xc\})|\\
        \le& \sum_{S \in \mathcal{S}(\mathcal{T})}\sum_{c \in C(S)}\frac{2\nu(Sc)}{n_S}+\sum_{S \in \mathcal{S}(\mathcal{T})}\frac{1}{n_S}\sum_{m = 1}^{n_S}\sum_{x \in gS\setminus S}\sum_{c \in G}|\nu(\{\phi_{S,m}(x)c\})-\nu(\{xc\})|\\
        < &2\epsilon\sum_{S \in \mathcal{S}(\mathcal{T})}\sum_{c \in C(S)}\nu(Sc)+\sum_{S \in \mathcal{S}(\mathcal{T})}\frac{1}{n_S}\sum_{m = 1}^{n_S}\sum_{x \in gS\setminus S}\frac{\epsilon}{M|U|} < 3\epsilon.\\
    \end{alignat*}
    To prove the second claim of (i), it suffices to argue as above after replacing the maps $\phi_{S,m}$ with the maps $\phi_{S,m}':S\setminus g^{-1}S\rightarrow S$ given by $\phi_{S,m}'(x) = \phi_{S,m}(gx)$.
    
    To prove (ii), we see that for any $g \in Q$ we have
    
    \begin{alignat*}{2}
        \epsilon \ge &\int_G|\nu(\{gx\})-\nu(\{x\})|d\lambda(x) = \sum_{T \in \mathcal{T}}\int_T|\nu(\{gx\})-\nu(\{x\})|d\lambda(x) \\
        \ge &\sum_{T \in \mathcal{T}}\nu(T)\int_T|\nu_T(\{gx\})-\nu_T(\{x\})|d\lambda(x)-\sum_{T \in \mathcal{T}}\int_{T\setminus g^{-1}T}\nu(\{gx\})d\lambda(x)\\
        > & \sum_{T \in \mathcal{T}}\nu(T)\int_T|\nu_T(\{gx\})-\nu_T(\{x\})|d\lambda(x)-3\epsilon.
    \end{alignat*}
    For $g \in Q$, let $A_g$ denote the set of tiles $T$ for which either $\nu _T$ is not $(\{g\},\sqrt{\epsilon}|Q|)$-invariant or with $\nu (T) = 0$, and let $B_g$ be the set of all other tiles. We see that for $g \in Q$ we have

    \begin{alignat*}{2}
        4\epsilon > &\sum_{T \in \mathcal{T}}\nu(T)\int_T|\nu_T(\{gx\})-\nu_T(\{x\})| \ge \sum_{T \in A_g}\nu(T)\int_T|\nu_T(\{gx\})-\nu_T(\{x\})| \ge \sum_{T \in A_g}\nu(T)\sqrt{\epsilon}|Q|\text{, so}\\
        &\sum_{T \in A_g}\nu(T) < 4\sqrt{\epsilon}|Q|^{-1}, \sum_{T \in B_g}\nu(T) > 1-4\sqrt{\epsilon}|Q|^{-1}\text{, and }\sum_{T \in \cap_{g \in Q}B_g}\nu(T) > 1-4\sqrt{\epsilon}.
    \end{alignat*}
    Consequently, we let $D$ denote the union of all tiles $T$ that are contained in every $B_g$ with $g \in Q$. If $D$ is an infinite set, then using monotonicity of the measure $\nu$ we can pick a subset, which by abuse of notation we also call $D$, so that $D$ is a finite union of tiles of $\mathcal{T}$ (and hence it is finite) and satisfies $\nu(D) > 1-4\sqrt{\epsilon}$.
\end{proof}

\begin{lemma}\label{TheObviousLemma}
    Let $G$ be a countably infinite amenable group. For each finite set $F \subseteq G$ and each $\epsilon > 0$, there exists a finite set $K \subseteq G$ such that for any $(K,\epsilon)$-invariant probability measure $\nu$, we have $\nu(Fc) < 2\epsilon$ for all $c \in G$.
\end{lemma}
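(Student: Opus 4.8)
The plan is to use the fact that $(K,\epsilon)$-invariance forces left translates of any set by elements of $K$ to carry nearly the same $\nu$-mass, combined with the fact that an infinite group admits many pairwise disjoint left translates of a fixed finite set $F$. The point is that if we can fit $m$ disjoint translates of $Fc$ whose masses are each within $\epsilon$ of $\nu(Fc)$, then $m(\nu(Fc)-\epsilon)\le 1$, which pins down $\nu(Fc)$ once $m \gtrsim 1/\epsilon$.

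Concretely, I would first fix an integer $m \ge 1/\epsilon$. Using that $G$ is infinite while $F$ is finite, I construct $k_1,\dots,k_m \in G$ greedily so that the sets $k_1F,\dots,k_mF$ are pairwise disjoint: having chosen $k_1,\dots,k_j$, the condition $k_{j+1}F \cap k_iF = \emptyset$ for all $i \le j$ is equivalent to $k_{j+1} \notin \bigcup_{i \le j} k_iFF^{-1}$, and this excludes only finitely many elements of $G$, so an admissible $k_{j+1}$ exists. I then set $K := \{k_1,\dots,k_m\}$, which depends only on $F$ and $\epsilon$. The crucial observation making a single $K$ work for every $c$ at once is that, because right multiplication by $c$ is injective, the translates $k_1Fc,\dots,k_mFc$ are pairwise disjoint whenever $k_1F,\dots,k_mF$ are; disjointness has been arranged for $F$ itself and is therefore automatic for all $Fc$.

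Now let $\nu$ be any $(K,\epsilon)$-invariant probability measure and fix $c \in G$. Writing $\nu(k_iFc) = \sum_{g \in Fc}\nu(\{k_ig\})$ and comparing termwise with $\nu(Fc) = \sum_{g \in Fc}\nu(\{g\})$, I bound
\[
    |\nu(k_iFc) - \nu(Fc)| \le \sum_{g \in Fc}|\nu(\{k_ig\}) - \nu(\{g\})| \le \int_G|\nu(\{k_ig\}) - \nu(\{g\})|\,d\lambda(g) < \epsilon,
\]
using $k_i \in K$. Hence $\nu(k_iFc) > \nu(Fc) - \epsilon$ for each $i$. Summing over $i = 1,\dots,m$ and invoking the disjointness of the $k_iFc$ gives $m(\nu(Fc) - \epsilon) < \sum_{i=1}^m \nu(k_iFc) \le 1$, so $\nu(Fc) < \tfrac{1}{m} + \epsilon \le 2\epsilon$, as desired. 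Since the inequality is uniform in $c$, the proof is complete.

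There is no serious obstacle here: every step is elementary. The only things to be careful about are arranging disjointness for $F$ (so that it transfers to all $Fc$ simultaneously, giving a $c$-independent $K$) and choosing $m$ large enough that $\tfrac{1}{m} \le \epsilon$.
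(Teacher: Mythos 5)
Your proof is correct and takes essentially the same approach as the paper: choose $m\geq 1/\epsilon$ pairwise disjoint left translates $k_1F,\dots,k_mF$ (possible since $G$ is infinite), let $K=\{k_1,\dots,k_m\}$, and use $(K,\epsilon)$-invariance to compare $\nu(Fc)$ termwise with each $\nu(k_iFc)$ before summing over the disjoint translates. Your explicit remark that disjointness of the $k_iF$ transfers to the $k_iFc$ under right translation is a point the paper leaves implicit, but the argument is the same.
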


\begin{proof}
    Let $L \in \mathbb{N}$ be such that $L^{-1} < \epsilon$. Let $K := \{g_i\}_{i = 1}^L \subseteq G$ be such that $g_iF\cap g_jF = \emptyset$ when $i \neq j$. We see that for every $c \in G$ we have

    \begin{equation}
        L\nu(Fc) \le \sum_{i = 1}^L\left(\nu(g_iFc)+\epsilon\right) \le 1+L\epsilon\text{, hence }\nu(Fc) \le L^{-1}+\epsilon < 2\epsilon.
    \end{equation}
\end{proof}

\begin{lemma}\label{FinitizedConvergenceInMeasure}
    Let $G$ be an amenable group, $(X,\mathscr{B},\mu,(\tau_g)_{g \in G})$ an ergodic measure preserving system, and let $f \in L^1(X,\mu)$. Given $\epsilon > 0$ there exists a finite $K \subseteq G$ and a $\delta > 0$ such that for any $(K,\delta)$-invariant probability measure $\nu$ on $G$, there exists a set $A \in \mathscr{B}$ with $\mu(A) > 1-\epsilon$ such that for all $x \in A$ we have

    \begin{equation}\label{ConvergenceInMeasureEquation1}
        \left|\int_Gf(\tau_gx)d\nu(g)-\int_Xfd\mu\right| < \epsilon.
    \end{equation}
    Furthermore, if $f \in L^\infty(X,\mu)$, then we can choose $A$ so that for all $x \in A$ we also have

    \begin{equation}\label{ConvergenceInMeasureEquation2}
        \sup_{g \in \text{supp}(\nu)}|f(\tau_gx)| \ge ||f||_\infty-\epsilon.
    \end{equation}
\end{lemma}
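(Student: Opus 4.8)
The plan is to establish a finitary form of the mean ergodic theorem that is uniform over all sufficiently invariant measures, and then to deduce both assertions from it via Markov's inequality together with ergodicity. Throughout, write $A_\nu h := \int_G h(T_g\,\cdot\,)\,d\nu(g)$ for $h \in L^1(X,\mu)$, and let $U_g h := h\circ T_g$ denote the associated Koopman operators, so that $A_\nu h = \int_G U_g h\,d\nu(g)$. Since each $U_g$ is a Hilbert-space isometry (measure preservation) and an $L^1$-isometry, and $\nu$ is a probability measure, Tonelli's theorem shows $A_\nu$ is a contraction of both $L^1(X,\mu)$ and $L^2(X,\mu)$; in particular $A_\nu f$ is well defined $\mu$-a.e.

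The technical heart is the following uniform estimate: for $h \in L^2(X,\mu)$ with $\int_X h\,d\mu = 0$ and any $\eta > 0$, there are a finite $K \subseteq G$ and $\delta > 0$ such that $\|A_\nu h\|_2 < \eta$ for every $(K,\delta)$-invariant $\nu$. Because the system is ergodic, the only $U$-invariant vectors are the constants, so $h$ lies in the closed span of the coboundaries $\{U_k\xi - \xi : k \in G,\ \xi \in L^2\}$. Fixing a finite approximation $\bigl\|h - \sum_{i=1}^m (U_{k_i}\xi_i - \xi_i)\bigr\|_2 < \eta/2$ and reindexing one of the resulting sums gives $A_\nu(U_k\xi - \xi) = \sum_{g}\bigl(\nu(\{k^{-1}g\}) - \nu(\{g\})\bigr)U_g\xi$, whose norm is at most $\|\xi\|_2 \int_G |\nu(\{k^{-1}g\}) - \nu(\{g\})|\,d\lambda(g)$. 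This last quantity is exactly the invariance defect of $\nu$ at the element $k^{-1}$, so taking $K \supseteq \{k_i^{-1}\}_{i=1}^m$ and $\delta$ small enough that each such defect is below $\eta/(2m\max_i\|\xi_i\|_2)$, and using that $A_\nu$ contracts the approximation error, yields $\|A_\nu h\|_2 < \eta$.

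For the first assertion I would, given $f \in L^1$ and $\epsilon > 0$, choose $f' \in L^2$ with $\|f - f'\|_1 < \epsilon'$, set $c = \int_X f\,d\mu$, and apply the estimate above to $h = f' - \int_X f'\,d\mu$ with a suitable $\eta$. Combining this with $\|A_\nu(f-f')\|_1 \le \|f-f'\|_1$ and $|\int f' - c| \le \|f-f'\|_1$ gives $\|A_\nu f - c\|_1 < 3\epsilon'$ for all sufficiently invariant $\nu$, and Markov's inequality bounds $\mu(\{|A_\nu f - c| \ge \epsilon\}) \le 3\epsilon'/\epsilon$; choosing $\epsilon'$ small and putting $A = \{x : |A_\nu f(x) - c| < \epsilon\}$ yields $\mu(A) > 1-\epsilon$ and \eqref{ConvergenceInMeasureEquation1}. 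For the $L^\infty$ assertion, let $E = \{x : |f(x)| > \|f\|_\infty - \epsilon\}$, which satisfies $\eta := \mu(E) > 0$ by definition of the essential supremum. Applying the already-proved first assertion to $\mathbbm{1}_E$ with error parameter $\eta/2$ produces (after enlarging $K$ and shrinking $\delta$) a set $A''$ with $\mu(A'') > 1-\eta/2$ on which $A_\nu\mathbbm{1}_E(x) = \nu(\{g : T_g x \in E\}) > \eta/2 > 0$. Since $G$ is discrete, any set of positive $\nu$-measure meets $\mathrm{supp}(\nu)$, so for each such $x$ there is $g \in \mathrm{supp}(\nu)$ with $T_g x \in E$, giving $\sup_{g \in \mathrm{supp}(\nu)}|f(T_g x)| > \|f\|_\infty - \epsilon$ and hence \eqref{ConvergenceInMeasureEquation2}; intersecting $A''$ with the set from the first assertion and adjusting constants produces the required $A$.

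The main obstacle I anticipate is the uniform mean ergodic estimate: one needs control of $\|A_\nu h\|_2$ over \emph{all} sufficiently invariant measures $\nu$ rather than merely along a fixed F\o lner sequence, and one must keep the translation conventions straight so that the coboundary defect is governed by the left-invariance ($(K,\delta)$-invariance) hypothesis actually assumed. Once this finitary mean ergodic theorem is secured, the passage to convergence in measure and the $L^\infty$ refinement are routine consequences of Markov's inequality and ergodicity.
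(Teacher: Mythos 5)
Your proof is correct, but it proceeds quite differently from the paper's. The paper proves the first assertion by contradiction: if the statement failed, one could extract a sequence of $(K_n,\delta_n)$-invariant measures $\nu_n$ (with $K_n$ exhausting $G$ and $\delta_n\to 0$) witnessing the failure on sets of measure $\ge\epsilon$; since such a sequence is a Reiter sequence, the Mean Ergodic Theorem gives $\int_G f(T_g\cdot)\,d\nu_n(g)\to\int_X f\,d\mu$ in $L^1$ and hence in measure, a contradiction. You instead prove the uniform statement directly, by decomposing a mean-zero $L^2$ function into an $L^2$-small remainder plus a finite sum of coboundaries $U_k\xi-\xi$ and bounding $\|A_\nu(U_k\xi-\xi)\|_2$ by $\|\xi\|_2$ times the invariance defect of $\nu$ at $k^{-1}$; this is in effect a self-contained, finitary proof of the uniform mean ergodic theorem that the paper imports as a black box, and it has the advantage of producing explicit $K$ and $\delta$ rather than relying on compactness. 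The $L^\infty$ refinements also differ: the paper applies the first assertion to $|f|^p$ for each $p$ and uses $\|f\|_p\to\|f\|_\infty$ (intersecting countably many sets $A_p$, which strictly speaking requires observing that only finitely many $p$ are needed for a fixed $\epsilon$, since each $p$ carries its own $(K,\delta)$ requirement), whereas you apply it once to the indicator of the super-level set $E=\{|f|>\|f\|_\infty-\epsilon\}$ and use that a set of positive $\nu$-measure meets $\mathrm{supp}(\nu)$ — arguably the cleaner route. The only loose ends in your write-up are bookkeeping: in the last step you must take the error parameter for $\mathbbm{1}_E$ to be of order $\min(\mu(E),\epsilon)/2$ so that the final intersection still has measure $>1-\epsilon$, which you acknowledge with ``adjusting constants.''
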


\begin{proof}
    We begin with the case in which $f \in L^1(X,\mu)$. Let $K_1 \subseteq K_2 \subseteq \cdots \subseteq G$ be an exhaustion of $G$ by finite sets, and let $\delta_1 > \delta_2 > \cdots > \delta_n > \cdots$ tend to $0$. Let us assume for the sake of contradiction that there exists some $\epsilon > 0$ such that for each $n \in \mathbb{N}$ there exists a set $A_n \in \mathscr{B}$ with $\mu(A_n) > \epsilon$ and a $(K_n,\delta_n)$-invariant probability measure $\nu_n$ on $G$ such that

    \begin{equation}
        \left|\int_Gf(\tau_gx)d\nu_n(g)-\int_Xfd\mu\right| > \epsilon
    \end{equation}
    for all $x \in A_n$. Since $(\nu_n)_{n = 1}^\infty$ is a Reiter sequence, the Mean Ergodic Theorem (see, e.g. \cite[Proposition 5.4]{AmenabilityByPaterson}) tells us that

    \begin{equation}
        \lim_{n\rightarrow\infty}\int_Gf(\tau_gx)d\nu_n(g) = \int_Xfd\mu,
    \end{equation}
    with convergence taking place in $L^1(X,\mu)$. In particular, we have convergence in measure, so let $N \in \mathbb{N}$ be such that for all $n \ge N$ we have

    \begin{equation}
        \left|\int_Gf(\tau_gx)d\nu_n(g)-\int_Xfd\mu\right| < \epsilon
    \end{equation}
    on a set of measure at least $1-\epsilon$, which yields the desired contradiction.

    Now let us assume that $f \in L^\infty(X,\mu)$. Let $A_0 \in \mathscr{B}$ be such that $\mu(A_0) > 1-2^{-1}\epsilon$, and Equation \eqref{ConvergenceInMeasureEquation1} is satisfied for $\frac{\epsilon}{2}, f$ and all $x \in A_0$. For each $p \in \mathbb{N}$, let $A_p \in \mathscr{B}$ be such that $\mu(A_p) > 1-2^{-p-1}\epsilon$, and Equation \eqref{ConvergenceInMeasureEquation1} is satisfied for $2^{-p-1}\epsilon, |f|^p$ and all $x \in A_p$. Let $A = \bigcap_{p = 0}^\infty A_p$. We see that for any $x \in A$ and any $p \in \mathbb{N}$, there exists $g \in \text{supp}(\nu)$ for which $|f(\tau_gx)| > ||f||_p-\epsilon$. The desired result follows from the fact that $||f||_\infty = \lim_{p\rightarrow\infty}||f||_p$.
\end{proof}

\subsection{Koopman representations for positive definite functions}
Let $G$ be a locally compact second countable (l.c.s.c.) topological group with identity $e$ and left Haar measure $\lambda$. 
A function $f:G\rightarrow\mathbb{C}$ is \textbf{positive definite} if for any $c_1,\cdots,c_n \in \mathbb{C}$ and $g_1,\cdots,g_n \in G$, we have $\sum_{i,j = 1}^nc_i\overline{c_j}f(g_j^{-1}g_i) \ge 0$. 
We denote the set of all positive definite functions on $G$ by $\mathbf{P}(G)$. 
If $U$ is a unitary representation of a group $G$ on a Hilbert space $\mathcal{H}$, then $\xi \in \mathcal{H}$ is a \textbf{cyclic vector} if $\overline{\text{span}\{U_g\xi\ |\ g \in G\}} = \mathcal{H}$. 
A classical result of Gelfand, Naimark, and Segal lets us associate to each $\phi \in \mathbf{P}(G)$ a corresponding unitary representation of a l.c.s.c. group $G$.

\begin{theorem}[{\cite[Theorem C.4.10]{Kazhdan'sPropertyT}}]\label{GNSConstruction}
    If $\phi \in \mathbf{P}(G)$ then there exists a triple $(U,\mathcal{H},\xi)$ consisting of a unitary representation $U$ of $G$ on a Hilbert space $\mathcal{H}$ and a cyclic vector $\xi \in \mathcal{H}$ such that $\phi(g) = \langle U_g\xi,\xi\rangle$.
\end{theorem}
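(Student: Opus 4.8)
The plan is to carry out the Gelfand--Naimark--Segal construction explicitly. First I would record the elementary consequences of positive definiteness obtained from small test collections: taking $n=1$ gives $\phi(e)\ge 0$, and the $n=2$ case forces the relevant $2\times 2$ matrix to be Hermitian, yielding $\phi(g^{-1})=\overline{\phi(g)}$ and $|\phi(g)|\le\phi(e)$. I would then form the free complex vector space $V$ with basis $\{e_g : g\in G\}$ and equip it with the sesquilinear form determined by $B(e_g,e_h):=\phi(h^{-1}g)$, that is, $B\big(\sum_i c_i e_{g_i},\sum_j d_j e_{h_j}\big)=\sum_{i,j}c_i\overline{d_j}\phi(h_j^{-1}g_i)$.

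The first genuine point to check is that $B$ is positive semidefinite. The hypothesis is stated with the products $g_ig_j^{-1}$, whereas $B(v,v)=\sum_{i,j}c_i\overline{c_j}\phi(g_j^{-1}g_i)$ involves $g_j^{-1}g_i$; I would bridge this by applying the definition to the collection $\{g_i^{-1}\}$ with coefficients $\{\overline{c_i}\}$ and then conjugating, using $\overline{\phi(x)}=\phi(x^{-1})$ to identify the result with $B(v,v)$. Once $B$ is a positive semidefinite Hermitian form, Cauchy--Schwarz for semi-inner products shows the null space $N=\{v:B(v,v)=0\}$ is a subspace and that $B$ descends to a genuine inner product on $V/N$; completing yields the Hilbert space $\mathcal{H}$, with $q:V\to\mathcal{H}$ the canonical map.

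Next I would build the representation from left translation $L_a e_g:=e_{ag}$ (writing $L$ rather than $\lambda$, which is reserved here for Haar measure). A one-line cancellation shows $B(L_a v,L_a w)=B(v,w)$, so each $L_a$ is $B$-isometric, preserves $N$, and descends to an isometry of $V/N$ that extends to a unitary $U_a\in\mathcal{U}(\mathcal{H})$; surjectivity is automatic since $L_a^{-1}=L_{a^{-1}}$, and $L_aL_b=L_{ab}$ gives $U_aU_b=U_{ab}$. Taking $\xi:=q(e_e)$, we have $U_g\xi=q(e_g)$, so $\{U_g\xi\}_{g\in G}$ spans a dense subspace and $\xi$ is cyclic, while $\langle U_g\xi,\xi\rangle=B(e_g,e_e)=\phi(g)$ is exactly the reproducing identity.

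The main obstacle --- and the only place where the topology of $G$ and the continuity of $\phi$ enter, rather than pure algebra --- is verifying that $U$ is a representation in the required sense, i.e.\ (strongly) continuous, hence measurable. Here I would compute $\|U_g\xi-U_h\xi\|^2=2\phi(e)-2\,\mathrm{Re}\,\phi(h^{-1}g)$; as $h\to g$ we have $h^{-1}g\to e$, so continuity of $\phi$ at $e$ forces this quantity to $0$, giving continuity of $g\mapsto U_g\xi$. Since $\|U_g\|\equiv 1$ and the vectors $U_g\xi$ have dense span, a standard three-$\epsilon$ approximation upgrades this to strong continuity of $g\mapsto U_g\eta$ for every $\eta\in\mathcal{H}$. (When $G$ is discrete, as in our main applications, this step is vacuous.) This completes the construction of the triple $(U,\mathcal{H},\xi)$.
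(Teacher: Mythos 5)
The paper does not prove this statement; it cites it directly as \cite[Theorem C.4.10]{Kazhdan'sPropertyT}. Your argument is the standard GNS construction --- the same one carried out in that reference --- and it is correct, including the two points that actually need care: reconciling the paper's convention $\phi(g_ig_j^{-1})$ with the kernel $\phi(h^{-1}g)$ via $\overline{\phi(x)}=\phi(x^{-1})$, and upgrading continuity of $g\mapsto U_g\xi$ (from $\|U_g\xi-U_h\xi\|^2=2\phi(e)-2\,\mathrm{Re}\,\phi(h^{-1}g)$) to strong continuity by density and uniform boundedness.
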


For $\phi \in \mathbf{P}(G)$, we call the triple $(U,\mathcal{H},\xi)$ given to us by Theorem \ref{GNSConstruction} the \textbf{GNS triple associated to $\phi$}. 

The Gaussian Measure Space Construction (cf. \cite[Chapter 3.11]{ErgodicTheoryViaJoinings} or \cite[Chapter 8.2]{CornfeldFominSinai}) gives us the following variation of Theorem \ref{GNSConstruction}.

\begin{theorem}\label{GaussianMPSConsequence}
    For each $\phi \in \mathbf{P}(G)$ there exists a m.p.s. $\mathcal{X} := (X,\mathscr{B},\mu,(\tau_g)_{g \in G})$ and an $f \in L^2(X,\mu)$ with the following properties:
    \begin{enumerate}[(i)]
        \item The function $f$ has a Guassian distribution, so it is unbounded.
        
        \item We have $\phi(g) = \langle \tau_gf,f\rangle$ for all $g \in G$.

        \item If $\phi$ is real-valued, then $f$ can be taken to be real-valued.

        \item If $\mathcal{X}$ is ergodic, then it is weakly mixing.

        \item If $f$ is orthogonal to all finite dimensional $(\tau_g)_{g \in G}$-invariant subspaces of $L^2(X,\mu)$, then $\mathcal{X}$ is weakly mixing.
    \end{enumerate} 
\end{theorem}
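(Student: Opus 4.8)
The plan is to combine the GNS construction (Theorem \ref{GNSConstruction}) with the Gaussian measure space construction cited above, and then extract the dynamical properties (iv) and (v) from the Wiener chaos decomposition of the resulting system. First I would apply Theorem \ref{GNSConstruction} to obtain a unitary representation $(U,\mathcal{H},\xi)$ of $G$ with $\xi$ cyclic and $\phi(g) = \langle U_g\xi,\xi\rangle$. Feeding $(U,\mathcal{H})$ into the complex Gaussian measure space construction produces a probability space $(X,\mathscr{B},\mu)$, a measure-preserving action $(T_g)_{g \in G}$, and a complex-linear isometry $I\colon \mathcal{H}\to L^2(X,\mu)$ onto the first Wiener chaos that intertwines $U$ with the Koopman representation and sends each $v$ to a centered complex Gaussian variable $I(v)$. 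Setting $f := I(\xi)$ yields a Gaussian, hence unbounded, $f$, which gives (i), while the intertwining gives $\langle T_gf,f\rangle = \langle U_g\xi,\xi\rangle = \phi(g)$, which gives (ii). For (iii), if $\phi$ is real-valued I would instead run the real Gaussian construction over the real GNS representation, so that $f = W_\xi$ is a real Gaussian variable satisfying the same covariance identity.

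The substance lies in (iv) and (v), and for both I would pass through the characterization of weak mixing by the absence of nonzero finite-dimensional invariant subspaces for the Koopman representation on $L^2_0(X,\mu)$, together with the chaos decomposition $L^2(X,\mu) = \bigoplus_{n\ge 0}\mathcal{H}^{(n)}$, where $\mathcal{H}^{(0)}$ is the constants, $\mathcal{H}^{(1)}\cong\mathcal{H}$, and each $\mathcal{H}^{(n)}$ carries the representation $\bigoplus_{p+q=n}U^{\odot p}\otimes\bar U^{\odot q}$ (a subrepresentation of $U^{\otimes p}\otimes\bar U^{\otimes q}$). For (v), since $\mathcal{H}^{(1)}$ is itself an invariant subspace, the hypothesis that $f = I(\xi)$ is orthogonal to every finite-dimensional invariant subspace of $L^2$ forces $\xi$ to be orthogonal to every finite-dimensional $U$-invariant subspace $V\subseteq\mathcal{H}$; because $\xi$ is cyclic and $V^\perp$ is invariant, this forces $V = 0$, so $U$ has no nonzero finite-dimensional invariant subspace. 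For (iv), ergodicity rules out invariant vectors in $\mathcal{H}^{(1)}$ directly (such a vector is a nonconstant invariant Gaussian), and it rules out higher-dimensional invariant subspaces $V\subseteq\mathcal{H}$ as well, because the canonical invariant vector $\sum_i e_i\otimes\overline{e_i}\in V\otimes\bar V$ sits in the $p=q=1$ summand $\mathcal{H}\otimes\bar{\mathcal{H}}$ of $\mathcal{H}^{(2)}$ and would produce a nonconstant invariant function. In either case one reduces to the same statement: $U$ has no nonzero finite-dimensional invariant subspace.

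Finally I would prove the key representation-theoretic lemma: if a unitary representation $U$ of $G$ has no nonzero finite-dimensional invariant subspace (equivalently, $U$ is weakly mixing as a representation), then for every $p+q\ge 1$ the representation $U^{\otimes p}\otimes\bar U^{\otimes q}$ also has none, so that every $\mathcal{H}^{(n)}$ with $n\ge 1$ is free of finite-dimensional invariant subspaces and the system is weakly mixing. This rests on the standard fact that $U$ is weakly mixing if and only if $U\otimes\rho$ has no nonzero invariant vectors for every representation $\rho$, whence $U\otimes\rho$ is itself weakly mixing; since conjugation preserves weak mixing ($\bar U$ is weakly mixing iff $U$ is) and a subrepresentation of a weakly mixing representation is weakly mixing, the property propagates to all tensor powers and their conjugates, and hence to the symmetric subrepresentations on the chaoses. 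The main obstacle I anticipate is bookkeeping the complex-conjugate structure of the Hermitian Wiener chaos correctly, namely identifying the finite-dimensional invariant subspaces of $\mathcal{H}^{(n)}$ with subrepresentations built from $U$ and $\bar U$, rather than any deep difficulty; once the decomposition is set up, both (iv) and (v) follow from the weak-mixing propagation lemma.
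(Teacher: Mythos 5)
Your proposal is correct and is essentially the argument the paper is relying on: the paper offers no proof of this theorem beyond citing the Gaussian measure space construction, and your derivation of (i)--(iii) from the GNS triple plus the first Wiener chaos, and of (iv)--(v) from the chaos decomposition together with the propagation of weak mixing through tensor powers $U^{\otimes p}\otimes\bar U^{\otimes q}$, is the standard route those references take. No gaps.
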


We see that if $G = \mathbb{Z}$ and $\phi \in \mathbf{P}(\mathbb{Z})$ is given by $\phi(n) = e^{2\pi in\sqrt{2}}$, then the Gaussian Measure Space Construction gives us a m.p.s. $\mathcal{X} := (X,\mathscr{B},\mu,(\tau^n)_{n \in \mathbb{Z}})$ and a $f \in L^2(X,\mu)$ for which $\langle \tau^nf,f\rangle = e^{2\pi in\sqrt{2}}$. Since $f$ is an eigenvector of $\tau$ for the eigenvalue $e^{2\pi i\sqrt{2}}$, we see that $\mathcal{X}$ is not weakly mixing, so it will not be ergodic either. 
Consequently, it is natural to ask whether or not any positive definite function $\phi \in \textbf{P}(G)$ can be represented as $\phi(g) = \langle \tau_gf,f\rangle$ with $f \in L^2(X,\mu)$ and $\mathcal{X}$ ergodic. For $G = \mathbb{Z}$ this question was answered in the positive as \cite[Lemma 5.2.1]{SohailsPhDThesis}. Our next result extends this to all $G$.

\begin{theorem}\label{ErgodicRepresentation}
    Let $G$ be a l.c.s.c. group and let $\phi \in \mathbf{P}(G)$. There exists an ergodic m.p.s. $(X,\mathscr{B},\mu,(\tau_g)_{g \in G})$ and $f \in L^2(X,\mu)$ such that $\phi(g) = \langle \tau_gf,f\rangle$. Furthermore, if $\phi$ is real-valued, then $f$ can also be taken to be real-valued.
\end{theorem}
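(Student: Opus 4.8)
The plan is to first apply the GNS construction (Theorem \ref{GNSConstruction}) to obtain a triple $(U,\mathcal{H},\xi)$ with $\phi(g) = \langle U_g\xi,\xi\rangle$, and then to split $\phi$ into a weakly mixing piece and an almost periodic piece that can be modeled on separate ergodic systems and recombined. Let $\mathcal{H}_d \subseteq \mathcal{H}$ be the space of vectors $v$ whose orbit closure $\overline{\{U_gv : g \in G\}}$ is compact; this is the closed $U$-invariant span of the finite-dimensional invariant subspaces, and $\mathcal{H}_c := \mathcal{H}_d^\perp$ is then a $U$-invariant subspace containing no nonzero finite-dimensional invariant subspace. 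Writing $\xi = \xi_d + \xi_c$ with $\xi_d \in \mathcal{H}_d$ and $\xi_c \in \mathcal{H}_c$, both summands are preserved by $U$, so $\phi = \phi_d + \phi_c$, where $\phi_d(g) = \langle U_g\xi_d,\xi_d\rangle$ and $\phi_c(g) = \langle U_g\xi_c,\xi_c\rangle$ are each positive definite.

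For the weakly mixing piece I would feed $\phi_c$ into the Gaussian Measure Space Construction (Theorem \ref{GaussianMPSConsequence}) to obtain a system $\mathcal{X}_c$ and a centered Gaussian $f_c \in L^2(X_c,\mu_c)$ with $\langle T_g f_c, f_c\rangle = \phi_c(g)$. Since the GNS representation of $\phi_c$ is a subrepresentation of $U|_{\mathcal{H}_c}$, it has no nonzero finite-dimensional invariant subspaces; because the Koopman representation on the first Wiener chaos is exactly this representation, and chaoses of distinct order are orthogonal and invariant, $f_c$ is orthogonal to every finite-dimensional $(T_g)_{g \in G}$-invariant subspace of $L^2(X_c,\mu_c)$. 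Property (v) of Theorem \ref{GaussianMPSConsequence} then shows that $\mathcal{X}_c$ is weakly mixing.

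For the almost periodic piece I would use the Bohr compactification $K = bG$ with its canonical dense homomorphism $\iota : G \to K$, together with the action of $G$ on $(K,\mathrm{Haar})$ by $g\cdot x = \iota(g)x$; this action is ergodic because $\iota(G)$ is dense and $k \mapsto L_k h$ is $L^2$-continuous, so only constants are invariant. Since $\xi_d$ has precompact orbit, $\phi_d$ is almost periodic and hence extends to a continuous positive definite function $\psi$ on $K$. The main point is then a factorization statement on $K$: by Peter--Weyl the Koopman representation on $L^2(K)$ contains each irreducible of $K$ with multiplicity equal to its dimension, and since every positive operator on a $d$-dimensional space has rank at most $d$, any continuous positive definite function $\psi$ on $K$ satisfies $\psi(k) = \langle L_k f_d, f_d\rangle$ for a suitable $f_d \in L^2(K)$. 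Specializing to $k=\iota(g)$ yields an ergodic system $\mathcal{X}_d = (K,\mathrm{Haar},L_{\iota(\cdot)})$ that realizes $\phi_d$.

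Finally I would set $\mathcal{X} := \mathcal{X}_c \times \mathcal{X}_d$ and $f(x,y) := f_c(x) + f_d(y)$. Because $f_c$ is centered, the cross terms $\langle T_g(f_c\otimes 1), 1\otimes f_d\rangle$ and $\langle T_g(1\otimes f_d), f_c\otimes 1\rangle$ vanish, giving $\langle T_g f, f\rangle = \phi_c(g) + \phi_d(g) = \phi(g)$; and since the product of the weakly mixing system $\mathcal{X}_c$ with the ergodic system $\mathcal{X}_d$ is ergodic, $\mathcal{X}$ is ergodic. For the real-valued addendum, when $\phi$ is real the GNS data carries a compatible conjugation fixing $\xi$ and preserving $\mathcal{H}_d$ and $\mathcal{H}_c$, so $\phi_c$ and $\phi_d$ are real; property (iii) of Theorem \ref{GaussianMPSConsequence} supplies a real $f_c$, and the factorization on $K$ can be carried out with a real $f_d$, making $f$ real. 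I expect the main obstacle to be the almost periodic part: the Gaussian construction alone cannot produce ergodicity there (as the $e^{2\pi i n\sqrt{2}}$ example shows), so the crux is modeling $\phi_d$ on the uniquely ergodic rotation of $bG$ and verifying the factorization, after which recombining the two models rests entirely on the weak mixing of $\mathcal{X}_c$.
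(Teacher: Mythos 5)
Your proposal is correct and follows essentially the same route as the paper: the same splitting of the GNS representation into its almost periodic and weakly mixing parts, a Gaussian system for the weakly mixing part, an ergodic rotation of a compact group for the almost periodic part, and a product system to recombine them. The only (harmless) deviations are that you rotate the Bohr compactification $bG$ where the paper uses the closure of $\{U_g\}_{g\in G}$ inside $\prod_{i}\mathcal{U}(\mathcal{H}_i)$ (your Peter--Weyl multiplicity argument is exactly the content of the Dixmier lemma the paper cites there), and your conjugation argument for the real-valued case replaces the paper's almost-periodicity/syndeticity argument.
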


\begin{proof}[Proof of Theorem \ref{ErgodicRepresentation}]
    Let $\phi$ take values in $\mathbb{K} \in \{\mathbb{R},\mathbb{C}\}$. By Theorem \ref{GNSConstruction} let $U$ be a unitary representation of $G$ in a Hilbert space $\mathcal{H}$ and $f' \in \mathcal{H}$ a cyclic vector for which $\phi(g) = \langle U_gf',f'\rangle$. Let $\mathcal{H} = \mathcal{H}_c\oplus\mathcal{H}_w$ be the decomposition in which $\mathcal{H}_w$ has no finite dimensional $U$-invariant subspaces, and $\mathcal{H}_c$ decomposes into a direct sum of finite dimensional $U$-invariant subspaces. Let $f' = f_c'+f_w'$ with $f_c' \in \mathcal{H}_c$ and $f_w' \in \mathcal{H}_w$.
        
    We would now like to verify that $\langle U_gf'_c,f'_c\rangle$ and $\langle U_gf'_w,f'_w\rangle$ take values in $\mathbb{K}$. Since this is clear if $\mathbb{K} = \mathbb{C}$, let us assume for the moment that $\mathbb{K} = \mathbb{R}$. 
    Let us further assume for the sake of contradiction that $\left|\text{Im}(\langle U_{g_0}f'_c,f'_c\rangle)\right| = \epsilon > 0$ for some $g_0 \in G$ and $\epsilon > 0$. Since $g\mapsto \langle U_gf'_c,f'_c\rangle$ is an almost periodic function, we see that 

    \begin{equation}
        S := \left\{g \in G\ |\ \left|\text{Im}(\langle U_gf'_c,f'_c\rangle)\right| > \frac{\epsilon}{2}\right\},
    \end{equation}
    is syndetic. 
    Let $g_1,\cdots,g_r \in G$ be such that $G = \bigcup_{i = 1}^rg_iS$.
    Since $f'_w \in \mathcal{H}_w$, we see that for any F\o lner sequence $(F_n)_{n = 1}^\infty$ we have
    
    \begin{equation}
        \lim_{n\rightarrow\infty}\frac{1}{|F_n|}\sum_{g \in S\cap F_n}|\text{Im}(\langle U_gf'_w,f'_w\rangle)| \le \lim_{n\rightarrow\infty}\frac{1}{|F_n|}\sum_{g \in F_n}|\langle U_gf'_w,f'_w\rangle| = 0.
    \end{equation}
    Since $\phi$ is real-valued, we must have that $\left|\text{Im}(\langle U_gf'_w,f'_w\rangle)\right| = \left|-\text{Im}(\langle U_gf'_c,f'_c\rangle)\right| > \frac{\epsilon}{2}$ for all $g \in S$.
    However, this implies that

    \begin{equation}
        \lim_{n\rightarrow\infty}\frac{1}{|F_n|}\sum_{g \in S\cap F_n}|\text{Im}(\langle U_gf'_w,f'_w\rangle)| > \frac{\epsilon}{2}\lim_{n\rightarrow\infty}\frac{1}{|F_n|}\sum_{g \in F_n}\mathbbm{1}_S(g) \ge \frac{\epsilon}{2r},
    \end{equation}
    which yields the desired contradiction.

    Using Theorem \ref{GaussianMPSConsequence} we may pick a weakly mixing m.p.s. $\mathcal{X}_w := (X_w,\mathscr{B}_w,\mu_w,(\tau_{w,g})_{g \in G})$ and $f_w'' \in L^2_{\mathbb{K}}(X_w,\mu_w)$ for which $\langle \tau_{w,g}f_w'',f_w''\rangle_{L^2} = \langle U_gf_w',f_w'\rangle$. To handle $f_c'$, we require the following result.

\begin{lemma}\label{MainResultForCompactGroups}
Let $\phi \in \mathbf{P}(G)$ take values in $\mathbb{K}$ and let $(U, \mathcal{H}, \xi)$ be the associated GNS-triple. 
Suppose that $\mathcal{H}$ decomposes as a direct sum of finite dimensional sub-representations. 
Then there exists an ergodic m.p.s. $(K,\mathscr{B},\lambda_K,(\tau_g)_{g \in G})$ and $F \in L^2_{\mathbb{K}}(K,\lambda_K)$ for which $\phi(g) = \langle \tau_gF,F\rangle$.
\end{lemma}

\begin{proof}[Proof of Lemma \ref{MainResultForCompactGroups}]
    Let $\mathcal{U}(\mathcal{H})$ denote the group of unitary operators on $\mathcal{H}$ with the strong operator topology. 
    Let $\mathcal{H} = \oplus_{i \in I}\mathcal{H}_i$ be a decomposition of $\mathcal{H}$ into finite dimensional irreducible subrepresentations.
    Then the unitaries $U_g$, for $g\in G$, are all contained in the natural copy of the compact group $\prod_{i \in I}\mathcal{U}(\mathcal{H}_i)$ that lives in $\mathcal{U}(\mathcal{H})$. 
    Therefore, $K := \overline{\{ U_g\}_{g\in G}}$ is a compact subgroup of $\mathcal{U}(\mathcal{H})$, and $\phi$ factors through the homomorphism from $G$ to $K$ and extends there to the continuous positive definite function $\phi '$ on $K$ via $\phi ' (k)=\langle k\xi, \xi \rangle$. 
    Letting $\lambda_K$ denote the normalized Haar measure of $K$, by \cite[Lemma 14.1.1]{C(star)AlgebraByDixmier} there exists $F \in L^2_{\mathbb{K}}(K,\lambda_K)$ for which $\phi'(k) = \langle L_kF,F\rangle$, where $L$ is the left regular representation of $K$. 
    Letting $\tau_g = L_{U_g}$ we see that $\langle \tau_gF,F\rangle = \phi'(U_g) = \langle U_g\xi,\xi\rangle = \phi(g)$, so it only remains to observe that $(K,\mathscr{B},\lambda_K,(\tau_g)_{g \in G})$ is ergodic, since the image of $G$ in $K$ is dense.
\end{proof}
    
    Using Lemma \ref{MainResultForCompactGroups} we may pick an ergodic m.p.s. $\mathcal{X}_c := (X_c,\mathscr{B}_c,\mu_c,(\tau)_{c,g})_{g \in G})$ and $f_c'' \in L^2_{\mathbb{K}}(X_c,\mu_c)$ for which $\langle \tau_{c,g}f_c'',f_c''\rangle_{L^2} = \langle U_gf_c',f_c'\rangle$. Now let $\mathcal{X} = \mathcal{X}_c\times\mathcal{X}_w$ and note that $\mathcal{X}$ is ergodic. Let $f_w,f_c \in L^2_{\mathbb{K}}(X,\mu)$ be given by $f_w(x_1,x_2) = f_w''(x_1)$ and $f_c(x_1,x_2) = f_c''(x_2)$, and observe that $\int_Xf_wd\mu_w\times\mu_c = \int_{X_w}f''_wd\mu_w = 0$. We see that for $f = f_w+f_c$ we have

    \begin{alignat*}{2}
        \langle \tau_gf,f\rangle &= \langle \tau_{w,g}f_w,f_w\rangle+\langle \tau_{w,g}f_w,f_c\rangle+\langle \tau_{c,g}f_c,f_w\rangle+\langle \tau_{c,g}f_c,f_c\rangle\\
        &= \langle U_gf_w',f_w'\rangle+\int_{X_w}\tau_{w,g}f''_wd\mu_w\int_{X_c}f''_cd\mu_c+\int_{X_w}f''_wd\mu_w\int_{X_c}\tau_{c,g}f''_cd\mu_c+\langle U_gf_c',f_c'\rangle\\
        &= \langle U_gf_w',f_w'\rangle+\langle U_gf_c',f_c'\rangle=\langle U_gf,f\rangle = \phi(g).
    \end{alignat*}
\end{proof}

\begin{remark}
    It is natural to ask if we can improve Theorem \ref{ErgodicRepresentation} by requiring that $f \in L^\infty$ instead of $f \in L^2$. 
    It is a classical result of Foia\c{s} and Str\u{a}til\u{a} \cite{KroneckerImpliesGaussian} (see also \cite[Theorem 14.4.2$'$]{CornfeldFominSinai}) that if $E \subseteq [0,1]$ is a Kronecker set, $\nu$ a continuous measure supported on $E\cup(1-E)$, and $(X,\mathscr{B},\mu,(\tau^n)_{n \in \mathbb{Z}})$ is an ergodic m.p.s. with some $f \in L^2(X,\mu)$ for which $\hat{\nu}(n) = \langle \tau^nf,f\rangle$, then $f$ has a Gaussian distribution. 
    It follows that the function $f$ given to us by Theorem \ref{ErgodicRepresentation} applied to such a measure $\nu$, will not be in $L^\infty$.
\end{remark}

\subsection{The unique invariant mean on the space of weakly almost periodic functions}\label{SectionOnMeans}
A general treatment of (weak) almost periodicity for vector-valued functions is given in \cite{APFunctionsAndRepresentations}.
Here we collect some facts that we will use about weakly almost periodic functions taking values in a Hilbert space.
For simplicity, we restrict our attention to countably infinite groups $G$, as this is also the level of generality that suffices for our applications in Section \ref{Section:Appendix}.
Let $\mathcal{H}$ be a Hilbert space and let $\ell^\infty(G,\mathcal{H})$ denote the set of bounded functions $f:G\rightarrow\mathcal{H}$.
Let $L$ denote the left regular representation of $G$ on $\ell^\infty(G,\mathcal{H})$, i.e., $(L_hf)(g) = f(h^{-1}g)$.
A function $f \in \ell^\infty(G)$ is \textbf{weakly almost periodic} if the set $\{L_gf\}_{g \in G}$ is relatively weakly compact.\footnote{Some sources define this through the use of the left regular antirepresentation $L'$ that is given by $(L'_hf)(g) = f(hg)$, but both definitions are equivalent.} 
We let $W(G,\mathcal{H})$ denote the collection of weakly almost periodic functions in $\ell^\infty(G,\mathcal{H})$.
It is well know that there is a unique left invariant mean $M$ on $W(G) = W(G,\mathbb{C})$ (see for example \cite[Chapter 3.1]{GreenleafBook}), i.e., a positive linear functional of norm $1$ satisfying $M(L_hf) = M(f)$ for all $h \in G$ and all $f \in W(G)$.
Furthermore, the mean $M$ will also be right invariant, hence we simply refer to $M$ as the unique invariant mean.

From the invariant mean $M:W(G)\rightarrow\mathbb{C}$ we construct an invariant operator $M':W(G,\mathcal{H})\rightarrow\mathcal{H}$ as follows.
Write $\mathcal{H}^* = \{\eta^*\ |\ \eta \in \mathcal{H}\}$ for the dual space of $\mathcal{H}$, i.e. $\eta^*(\xi) = \langle \xi,\eta\rangle$ for all $\xi \in \mathcal{H}$ and all $\eta^* \in \mathcal{H}^*$.
We observe that $\mathcal{H}^*$ and $\mathcal{H}^{**} := (\mathcal{H}^*)^*$ are both naturally isomorphic to $\mathcal{H}$ as Hilbert spaces.
Observe that for any $f \in W(G,\mathcal{H})$ and any $\eta^* \in \mathcal{H}^*$, the function $f_{\eta^*}(g) = \langle f(g),\eta\rangle$ is in $W(G)$, so we may define a map $M':W(G,\mathcal{H})\rightarrow\mathcal{H}^{**}$ by $M(f_{\eta^*}) = \langle M'(f),\eta\rangle$ for all $f \in W(G,\mathcal{H})$ and all $\eta^* \in \mathcal{H}^*$.
To see that $M'$ is left invariant, we see that for all $f \in W(G,\mathcal{H})$, all $\eta^* \in \mathcal{H}^*$, and all $h \in G$, we have 

\begin{equation}
    \langle M'(L_hf),\eta\rangle = M(L_hf_{\eta^*}) = M(f_{\eta^*}) = \langle M'(f),\eta\rangle.
\end{equation}
A similar calculation shows that $M'$ is also right invariant. 
By abuse of notation, we also write $M':W(G,\mathcal{H})\rightarrow\mathcal{H}$ after identifying $\mathcal{H}^{**}$ with $\mathcal{H}$.
It can also be checked that $M'(f)$ belongs to the closed convex hull of the range of $f$.

If $U$ is a unitary representation of $G$ on $\mathcal{H}$, then $U(G)$ is a relatively weakly compact subset of the space of all bounded linear operators on $\mathcal{H}$.
Consequently, for any $\xi \in \mathcal{H}$, we have that $f_1(g) = U_g\xi$ is a weakly almost periodic function in $\ell^\infty(G,\mathcal{H})$, and that $f_2(g) = \langle U_g\xi,\xi\rangle$ is a weakly almost periodic function in $\ell^\infty(G) = \ell^\infty(G,\mathbb{C})$.
Let $\mathcal{H} = \mathcal{H}_I\oplus\mathcal{H}_I^{\perp}$, where $\mathcal{H}_I$ is the subspace of $U$-invariant vectors.
We see that for $\xi_I \in \mathcal{H}_I$, we have 

\begin{equation}
    M(g\mapsto\langle U_g\xi_I,\xi_I\rangle) = M\left(g\mapsto||\xi_I||^2\right) = ||\xi_I||^2\text{ hence }M'(g\mapsto U_g\xi_I) = \xi_I.
\end{equation}
Similarly, if $\xi_e \in \mathcal{H}_I^{\perp}$, then $M'(g\mapsto U_g\xi_e) = 0$, because the closed convex hull of $\{U_g\xi_e\}_{g \in G}$ is contained in $\mathcal{H}_I^{\perp}$, and the only $U$-invariant vector in $\mathcal{H}_I^{\perp}$ is $0$.
It follows that for any $\xi \in \mathcal{H}$, we have $M'(g\mapsto U_g\xi) = P_I\xi$ and $M(g\mapsto\langle U_g\xi,\xi\rangle) = ||P_I\xi||^2$, where $P_I:\mathcal{H}\rightarrow\mathcal{H}_I$ is the orthogonal projection.

It is also worth observing that the Jacobs-de Leeuw-Glicksberg decomposition can be expressed in terms of the mean $M$. 
In particular, we have $\mathcal{H} = \mathcal{H}_c\oplus\mathcal{H}_w$ where $\mathcal{H}_c$ is the direct sum of all finite dimensional subrepresentations of $U$, and $\mathcal{H}_w = \{\xi \in \mathcal{H}\ |\ M(g\mapsto|\langle U_g\xi,\xi\rangle|) = 0\}$. 
We refer the reader to \cite[Appendix 10]{ATempelmanBook} for more details regarding this particular formulation of the Jacobs-de Leeuw-Glicksberg decomposition and how it connects to the previous work of Godement.

\section{Asymptotic dynamics on amenable groups}

We begin by recalling a result that appeared implicitly in the work of Ruzsa \cite{RuzsavanderCorput}.

\begin{theorem}\label{Ruzsa'sResult}
    Let $\phi:\mathbb{Z}\rightarrow\mathbb{C}$ be a positive definite sequence satisfying $\phi(0) = 1$. There exists $(c_n)_{n = 1}^\infty \subseteq \mathbb{S}^1$ for which

    \begin{equation}
        \phi(h) = \lim_{N\rightarrow\infty}\frac{1}{N}\sum_{n = 1}^N c_{n+h}\overline{c_n}.
    \end{equation}
\end{theorem}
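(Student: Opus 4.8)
The plan is to reduce the statement to the Herglotz representation of $\phi$ and then to realise the resulting probability measure on the circle as the correlation spectrum of an explicit unimodular sequence built by concatenating pure frequencies. First I would apply Theorem \ref{GNSConstruction} (i.e.\ the spectral theorem, since $G = \mathbb{Z}$) to write $\phi(h) = \langle U^h\xi,\xi\rangle$ for a unitary $U$ and a cyclic unit vector $\xi$ in a Hilbert space $\mathcal{H}$. The spectral measure $\sigma := \langle E(\cdot)\xi,\xi\rangle$ is then a Borel probability measure on $\mathbb{S}^1$ (its total mass is $\|\xi\|^2 = \phi(0) = 1$) and $\phi(h) = \int_{\mathbb{S}^1} z^h\, d\sigma(z)$. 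Writing $z = e^{2\pi i t}$ with $t \in [0,1)$, the goal becomes: produce $(c_n)_{n=1}^\infty \subseteq \mathbb{S}^1$ with $\frac{1}{N}\sum_{n=1}^N c_{n+h}\overline{c_n} \to \int_0^1 e^{2\pi i h t}\, d\sigma(t) = \phi(h)$ for every $h$.

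The basic building block is that a pure frequency $c_n = e^{2\pi i n t}$ satisfies $c_{n+h}\overline{c_n} = e^{2\pi i h t}$ identically, so one frequency realises the point mass $\delta_t$. To realise a finitely supported measure $\sum_j p_j \delta_{t_j}$, I would concatenate sub-blocks of pure frequencies, devoting a proportion $\approx p_j$ of a pattern block of length $b$ to frequency $t_j$. Within each sub-block the shift-$h$ correlation is constant, and the only errors arise from the $O(\abs{h})$ indices near each sub-block boundary, where $n+h$ crosses into the next frequency; as the number of sub-blocks is fixed and $b \to \infty$, these boundary errors, together with the errors at the junctions between repeated pattern blocks, contribute $O(\abs{h}/b) \to 0$. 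Hence any window of length $W \gg b$ of such a concatenation self-averages at shift $h$ to $\sum_j p_j e^{2\pi i h t_j}$.

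To treat the general (possibly continuous) $\sigma$, I would choose finitely supported probability measures $\sigma_k \to \sigma$ weak-$*$, so that $\widehat{\sigma_k}(h) \to \phi(h)$ for each fixed $h$, and assemble $(c_n)$ in consecutive stages, the $k$-th stage occupying an index interval $[N_{k-1}, N_k)$ and consisting of repetitions of a pattern block realising $\sigma_k$ with sub-block length $b_k$. The construction is governed by the scale separations $k \ll b_k \ll N_{k-1} \ll N_k$: the first kills the stage-$k$ boundary error for all shifts $\abs{h} \le k$, while $b_k \ll N_{k-1}$ guarantees that every window inside stage $k$ of length at least some $\theta_k$ with $b_k \ll \theta_k \ll N_{k-1}$ already self-averages to $\widehat{\sigma_k}(h)$.

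The step I expect to be the main obstacle is controlling the partial averages $a_N(h) := \frac{1}{N}\sum_{n=1}^N c_{n+h}\overline{c_n}$ for every $N$, not merely for $N = N_k$, since this is what upgrades convergence along the stage boundaries to a genuine limit. Fixing $h$ and an $N$ with $N_{k-1} \le N < N_k$, I would split at $N_{k-1}$, writing $a_N(h) = \frac{N_{k-1}}{N}a_{N_{k-1}}(h) + \frac{1}{N}\sum_{n = N_{k-1}+1}^N c_{n+h}\overline{c_n}$. If the stage-$k$ suffix has length at most $\theta_k$, its weight is at most $\theta_k/N_{k-1} \to 0$ while the prefix carries weight tending to $1$ and approximates $\phi(h)$ by induction on $k$; if the suffix has length exceeding $\theta_k \gg b_k$, it self-averages to $\widehat{\sigma_k}(h)$, and since the prefix again approximates $\phi(h)$, so does their weighted average. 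As $\widehat{\sigma_k}(h) \to \phi(h)$, letting $k \to \infty$ yields $a_N(h) \to \phi(h)$ for each fixed $h$. The precise choices of $\sigma_k$, $b_k$, $\theta_k$, and $N_k$ are then routine bookkeeping.
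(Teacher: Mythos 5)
Your proposal is correct and follows essentially the same route as the paper's sketch: both pass through the Herglotz representation $\phi = \hat{\sigma}$ and realize $\sigma$ by concatenating long blocks of pure frequencies $e^{2\pi i n t}$ in proportions dictated by $\sigma$ (the paper packages this as the skew product $T(x,y) = (x,y+x)$ on $(\mathbb{T}\times\mathbb{T},\sigma\times m)$ with $f(x,y)=e^{2\pi i y}$, whose ergodic components are circle rotations, so its Birkhoff-generic block sequences are exactly your pure frequencies). Your explicit control of the partial averages $a_N(h)$ for all $N$, via the prefix/suffix split at stage boundaries, supplies precisely the bookkeeping that the paper compresses into ``stitch together a sequence of finitistic approximations.''
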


We want to generalize Ruzsa's result to any countably infinite amenable group $G$ and any Reiter sequence $(\nu_n)_{n = 1}^\infty$ in $G$. To this end, we begin by reviewing the ideas behind the proof of Theorem \ref{Ruzsa'sResult}, as they will also be present in our generalization. We remark that Ruzsa used the language of probability to prove his result, and the following discussion uses the language of ergodic theory.

Firstly, we observe that there exists a probability measure $\mu$ on $\mathbb{T}$ for which $\phi(h) = \hat{\mu}(h)$. We then see that for the Hilbert space $\mathcal{H} = L^2(\mathbb{T},\mu)$, there is a natural unitary operator $U:\mathcal{H}\rightarrow\mathcal{H}$ given by $U(f)(x) = e^{2\pi ix}f(x)$, and that $\hat{\mu}(h) = \langle U^h1,1\rangle$. The operator $U$ is a multiplication operator, and we want to convert it into a Koopman operator so that we can use the Birkhoff Pointwise Ergodic Theorem to model the global dynamics of a given function through the pointwise orbits of that function. 
Consequently, we now consider $\mathcal{H}' = L^2(\mathbb{T}\times\mathbb{T},\mu\times m)$, where $m$ is the Lebesgue measure.
We see that $\tau(x,y) = (x,y+x)$ is a measure preserving automorphism of $\mathbb{T}\times\mathbb{T}$, and that for $f:\mathbb{T}\times\mathbb{T}\rightarrow\mathbb{S}^1$ given by $f(x,y) = e^{2\pi iy}$, we have $\langle\tau^hf,f\rangle = \int_{\mathbb{T}}e^{2\pi i hx}d\mu(x) = \hat{\mu}(h)$. 
If the transformation $\tau$ was ergodic, then we could take $c_n = f(\tau^nx)$ for some generic point $x$, but it is unfortunately clear that the transformation $\tau$ is in general highly non-ergodic. 
However, the ergodic decomposition of $\tau$ is easy to see from the given presentation.

Now suppose that we want to approximate the values of $\phi(h)$ up to a precision of $\epsilon$ for all $h \in H$ with $H$ finite, and some fixed $N = N_0$, $(c_n)_{n = 1}^{N_0+\text{max}(H)}$. We take $N_0$ to be so large that it can be partitioned into a large number of intervals of size $M$, with $M$ also sufficiently large. We approximate $f$ by a simple function in which the dynamics of each of the constituent step functions can be modeled by the restriction of that step function to some ergodic component. Since $M$ is sufficiently large, the dynamics of the restricted step function can be modeled by some sequence $(c_n)_{n = 1}^M \subseteq \mathbb{S}^1$ as a consequence of Birkhoff's Theorem. We then associate each of the $\frac{N_0}{M}$ intervals of length $M$ to one of the step functions, and the frequency with which we do so is dictated by $\mu$, because $\mu$ tells us how much weight to give each ergodic component. We then stitch together a sequence of finitistic approximations to get the desired result globally.

\begin{lemma}\label{KeyLemmaForAmenableGroups}
   Let $G$ be a countably infinite amenable group, let $H \subseteq G$ be finite with $e \in H$, let $\epsilon > 0$ be arbitrary, and let $(X,\mathscr{B},\mu,(\tau_g)_{g \in G})$ be a measure preserving system. Fix $f \in L^2(X,\mu)$ and let $R \subseteq \text{Range}(f)$ be a dense subset. There exists a $\delta > 0$, a finite set $K \subseteq G$, and a sequence $(c_g)_{g \in G} \subseteq R$ with $||(c_g)_{g \in G}||_\infty$ bounded by a function of $f$ and $\epsilon$, such that for every $(K,\delta)$-invariant probability measure $\nu$ we have

   \begin{alignat}{2}
       &\int_G|c_g|^2d\nu(g) \overset{\epsilon}{=} ||f||_2^2,\ \int_Gc_gd\nu(g) \overset{\epsilon}{=} \int_Xfd\mu,\text{ and }\label{KeyLemmaEquation1}\\
       &\int_Gc_{hg}\overline{c_g}d\nu(g) \overset{\epsilon}{=} \langle \tau_hf,f\rangle\text{ for all }h \in H.\label{KeyLemmaEquation2}
   \end{alignat}
   Furthermore, if $f \in L^\infty(X,\mu)$, then for any $h_1,\cdots,h_\ell \in H$ and $t_1,\cdots,t_\ell \in [0,|H|]$ we have

   \begin{alignat}{2}
       &\int_Gd_{h_1g,1}^{t_1}\cdots d_{h_\ell g,\ell}^{t_\ell}d\nu(g) \overset{\epsilon}{=} \int_X \tau_{h_1}f_1^{t_1}\cdots \tau_{h_\ell}f_\ell^{t_\ell}d\mu\text{, and}\label{KeyLemmaEquation3}\\
       &||(d_{h_1g,1}^{t_1}\cdots d_{h_\ell g,\ell}^{t_\ell})_{g \in G}||_\infty \overset{\epsilon}{=} ||\tau_{h_1}f_1^{t_1}\cdots \tau_{h_\ell}f_\ell^{t_\ell}||_\infty,\label{KeyLemmaEquation4}
   \end{alignat}
   where $f_i \in \{f,\overline{f}\}$ and $(d_{g,i})_{g \in G} \in \left\{(c_g)_{g \in G},(\overline{c}_g)_{g \in G}\right\}$, and $f_i = f$ if and only if $(d_{g,i})_{g \in G} = (c_g)_{g \in G}$.
\end{lemma}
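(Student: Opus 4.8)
The plan is to realize $(c_g)_{g \in G}$ as a family of orbit samples of $f$, distributed over a congruent sequence of tilings so that the prescribed averages are computed correctly by \emph{every} sufficiently invariant measure. First I would reduce the bookkeeping. Combining equal $h_i$ and using $t_i \le |H|$, one sees that the products appearing in \eqref{KeyLemmaEquation3}--\eqref{KeyLemmaEquation4} range over only finitely many functions $\Phi_1,\dots,\Phi_m \in L^\infty(X,\mu)$, which we arrange to include $f$, $\overline{f}$ and $|f|^2$. If $(c_g)$ is defined on each tile $Sc$ of a tiling by $c_{sc} = f(T_s x_c)$ for a base point $x_c$, then on that tile every product $d_{h_1 g,1}^{t_1}\cdots d_{h_\ell g,\ell}^{t_\ell}$ equals the corresponding $\Phi_r(T_s x_c)$, up to the boundary set of $s$ with $h_i s \notin S$; thus \eqref{KeyLemmaEquation1}--\eqref{KeyLemmaEquation4} all reduce to controlling averages of $\Phi_r(T_s x_c)$ and the suprema $\sup_s |\Phi_r(T_s x_c)|$. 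Since $R$ is dense in $\mathrm{Range}(f)$, I replace each value $f(T_s x_c)$ by a nearby element of $R$, perturbing every average by an amount absorbed into $\epsilon$; when $f \in L^2 \setminus L^\infty$ I first truncate, assigning a fixed small element of $R$ whenever $|f(T_s x_c)|$ exceeds a threshold $M = M(f,\epsilon)$, which bounds $\|(c_g)\|_\infty$ and, by the mean ergodic theorem estimate on the $\nu$-mass of large values, costs only $\epsilon$.

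For the sampling I would use the ergodic decomposition $\mu = \int_\Omega \mu_\omega\, dP(\omega)$ together with a congruent sequence of tilings $(\mathcal{T}_k)$ from Theorem \ref{CongruentTilingTheorem} whose shapes are $(Q,\eta)$-invariant for $Q \supseteq K$ and $\eta$ tiny. The essential point is that one orbit samples only a single ergodic component, so the base points must range over components with frequencies dictated by $\mu$. Concretely, the measurable map $\omega \mapsto \big(\int \Phi_r\, d\mu_\omega\big)_{r=1}^m$ is partitioned into finitely many cells $\Omega_1,\dots,\Omega_J$ of $P$-measure $p_j$ on which it is nearly constant; in each cell I fix a representative $\omega_j$ and a point $y_j$ in the (ergodic) component $\mu_{\omega_j}$ that is good in the sense of Lemma \ref{FinitizedConvergenceInMeasure}. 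I then color the tiles of $\mathcal{T}_1$ by types $j$ and set $x_c = y_{j(c)}$, doing this congruently through the tower so that inside every tile of every $\mathcal{T}_k$ the proportion of type-$j$ subtiles is forced to be $\approx p_j$. This is the device replacing the ``concatenate blocks with the right frequencies'' step of the $\mathbb{Z}$-argument recalled before the lemma.

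Given a $(K,\delta)$-invariant $\nu$, I would decompose the relevant average as $\sum_{T = Sc \in \mathcal{T}} \nu(T) \int_G \Phi_r(T_s x_c)\, d\nu_T^S(s)$, where $\nu_T$ is the conditional measure on the tile and $\nu_T^S$ its translate to the shape. By Lemma \ref{GoodTilingLemma}(ii) all but a $4\sqrt{\eta}$-fraction (in $\nu$-mass) of tiles have $\nu_T^S$ genuinely invariant, and the boundary discrepancies from $h_i s \notin S$ are bounded using Lemma \ref{GoodTilingLemma}(i) and the non-concentration estimate of Lemma \ref{TheObviousLemma}. On each good tile, Lemma \ref{FinitizedConvergenceInMeasure} applied in the component of $y_{j(c)}$ gives $\int_G \Phi_r(T_s x_c)\, d\nu_T^S(s) \approx \int \Phi_r\, d\mu_{\omega_{j(c)}}$, while its second clause delivers the sup-norm identity \eqref{KeyLemmaEquation4}. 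Summing with $\nu$-weights and using that the types equidistribute to $\mu$ yields $\sum_j p_j \int \Phi_r\, d\mu_{\omega_j} \approx \int_\Omega \big(\int \Phi_r\, d\mu_\omega\big) dP(\omega) = \int_X \Phi_r\, d\mu$, which is exactly the target on the right-hand sides of \eqref{KeyLemmaEquation1}--\eqref{KeyLemmaEquation3}.

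The main obstacle is uniformity: the sequence $(c_g)$ is fixed once and for all, yet it must satisfy the estimates for \emph{every} $(K,\delta)$-invariant $\nu$, whereas the good set produced by Lemma \ref{FinitizedConvergenceInMeasure} depends on the measure. The resolution is precisely the multiscale congruent tiling: because an invariant $\nu$ cannot concentrate on bounded sets (Lemma \ref{TheObviousLemma}) and its conditional measures on tiles are again invariant (Lemma \ref{GoodTilingLemma}), $\nu$ is compelled to average the congruently arranged blocks through the whole tower, so prescribing the type-frequencies to equal the $\mu$-weights $p_j$ at every scale forces the correct global averages independently of $\nu$. Making this simultaneous control rigorous --- in particular verifying that one base-point assignment is adequate for all conditional measures that can arise, and that the type-frequency errors telescope through the congruent tower --- is the technical heart of the argument; the remainder is bookkeeping with the triangle inequality over the finitely many functions $\Phi_r$.
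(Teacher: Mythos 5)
Your outline follows the paper's proof quite closely: approximate $f$ by a bounded function with range in $R$, pass to the ergodic decomposition, approximate the fiberwise correlations $y\mapsto\int_{X_y}T_hf_y\overline{f_y}\,d\mu_y$ by simple functions to get finitely many ``types'' $Y_{\vec j}$, choose a generic base point in each type via Lemma \ref{FinitizedConvergenceInMeasure}, define $c_{ga}=f_{y_{\vec j}}(T_{ga}x_{\vec j})$ tile by tile, and then evaluate $\int c_{hg}\overline{c_g}\,d\nu$ by conditioning on tiles via Lemma \ref{GoodTilingLemma} and controlling boundaries and non-concentration via Lemmas \ref{GoodTilingLemma}(i) and \ref{TheObviousLemma}. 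The one structural difference is that the paper uses a \emph{single} tiling $\mathcal{T}$ in this lemma and assigns types by partitioning the center set $C=\bigsqcup_{\vec j}D_{\vec j}$ so that \eqref{EquationForPartition} holds for every admissible $\nu$; the congruent tower of Theorem \ref{CongruentTilingTheorem} is reserved for the gluing argument in Theorem \ref{RepresentationTheoremForAmenableGroups}, whereas you import it into the lemma itself.

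The genuine gap is the step you yourself defer to the end: producing \emph{one} type assignment that every $(K,\delta)$-invariant $\nu$ reads with frequencies $\approx\gamma(Y_{\vec j})$. This is the entire content of the lemma beyond routine estimates (a single invariant $\nu$ is easy; ``for all $\nu$ simultaneously'' is the point), and the mechanism you offer --- prescribe the proportion of type-$j$ subtiles inside every tile of every $\mathcal{T}_k$ --- does not suffice as stated. A $(K,\delta)$-invariant measure is only constrained to vary slowly at the scale of $K$, which is far smaller than the tiles of $\mathcal{T}_2$; already in $\mathbb{Z}$, a measure whose density ramps linearly across a $\mathcal{T}_2$-tile is admissible and will weight the first and second halves of that tile's $\mathcal{T}_1$-subtiles very differently, so if the types are correctly proportioned but \emph{segregated} within the coarse tile the frequencies come out wrong. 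What is actually needed is that the assignment be equidistributed at the scale of individual $\mathcal{T}_1$-tiles relative to $K$: one must combine the non-concentration bound $\nu(Uc)<\epsilon/(16|J(H)|)$ of Lemma \ref{TheObviousLemma} with the near-equality of $\nu$ on $K$-translates to force the proportions, and this interleaving has to be arranged explicitly (this is also the one sentence in the paper's own proof that is asserted rather than carried out). Until that construction is pinned down, the proposal does not yet prove the lemma.

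Two smaller points. For \eqref{KeyLemmaEquation4} you invoke the second clause of Lemma \ref{FinitizedConvergenceInMeasure}, but you also need to choose the representative of each cell to nearly realize $\sup_{y\in Y_{\vec j}}\|T_hf_y\overline{f_y}\|_\infty$ (as in \eqref{EnsuringInftyNormEquation}), since a generic point only witnesses the sup of \emph{its own} fiber; and the matching upper bound $\|(d^{t_1}_{h_1g,1}\cdots)\|_\infty\le\|T_{h_1}f_1^{t_1}\cdots\|_\infty+\epsilon$ should be noted, though it is immediate from the construction. For the truncation when $f\in L^2\setminus L^\infty$, the error in \eqref{KeyLemmaEquation2} is controlled by the bilinear estimate $|\langle T_hf,f\rangle-\langle T_hf',f'\rangle|\le\|f-f'\|_2(\|f\|_2+\|f'\|_2)$ rather than by a mass estimate on the large values alone.
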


\begin{proof}
    We give the proof for Equation \ref{KeyLemmaEquation2} as well as Equation \eqref{KeyLemmaEquation4} in the corresponding case, and remark that the proof for Equation \eqref{KeyLemmaEquation3} is similar. 
    Let $f' \in L^\infty(X,\mu)$ be such that Range$(f') \subseteq R$, $||f'-f||_2 < \frac{\epsilon}{16||f||_2}$ and $||f'||_\infty = M$. 
    We begin by taking the ergodic decomposition of $(X,\mathscr{B},\mu,(\tau_g)_{g \in G})$. 
    Let $\mathcal{Y} := (Y,\mathscr{A},\gamma)$ be a probability space such that $(X,\mathscr{B},\mu,(\tau_g)_{g \in G})$ is the direct integral over $\mathcal{Y}$ of the ergodic systems $\mathcal{X}_y := (X_y,\mathscr{B}_y,\mu_y,(\tau_{y,g})_{g \in G})$. 
    Since $\tau_{y,g} = \tau_g|_{X_y}$, we will simply write $\tau_g$ instead of $\tau_{y,g}$ to save on notation. 
    Let $f_y \in L^\infty(X_y,\mu_y)$ be given by $f_y = f'|_{X_y}$. For $h \in H$, let $f_h:Y\rightarrow\mathbb{C}$ be given by $f_h(y) = \int_{X_y}\tau_hf_y(x)\overline{f_y(x)}d\mu_y(x)$, and let $S_h = \sum_{j = 1}^{J_h}w_{j,h}\mathbbm{1}_{Y_{j,h}}$ be a simple function on $Y$ with $\{Y_{j,h}\}_{j = 1}^{J_h}$ being pairwise disjoint and $||S_h-f_h||_\infty < \frac{\epsilon}{8}$.
    Let $J(H) = \{(j_h)_{h \in H}\ |\ 1 \le j_h \le J_h\ \forall\ h \in H\}$, and for each $\vec{j} \in J(H)$ let $Y_{\vec{j}} := \bigcap_{h \in H}Y_{j_h,h}$, and if $Y_{\vec{j}} \neq \emptyset$ let $y_{\vec{j}} \in Y_{\vec{j}}$ be such that
    
    \begin{equation}\label{EnsuringInftyNormEquation}
        ||\tau_hf_{y_{\vec{j}}}\overline{f_{y_{\vec{j}}}}||_\infty > \sup_{y \in Y_{\vec{j}}}||\tau_hf_y\overline{f_y}||_\infty-\frac{\epsilon}{2}.
    \end{equation}
    Let $K_{\vec{j},h}$, $\delta_{\vec{j},h}$, and $A_{\vec{j},h}$ be as in Lemma \ref{FinitizedConvergenceInMeasure} with respect to $\frac{\epsilon}{8|H|}$ and $\tau_hf_{y_{\vec{j}}}\overline{f_{y_{\vec{j}}}}$. 
    Let $K_1 = \bigcup_{h \in H}\bigcup_{\vec{j} \in J(H)}K_{\vec{j},h}$ and for each $\vec{j} \in J(H)$ let $x_{\vec{j}} \in \bigcap_{h \in H}A_{j_h,h}$ be arbitrary. 
    We require that $\sqrt{\delta}|K_1| < \text{min}\left\{\delta_{\vec{j},h}\ |\ h \in H\ \&\ \vec{j} \in J(H)\right\}$, $8M^2\sqrt{\delta} < \frac{\epsilon}{8}$, and $\delta < \frac{\epsilon}{16|J(H)|}$.

    Let $\mathcal{T}$ be a tiling of $G$ whose shapes $\{T_i\}_{i = 1}^I$ are each $(K_1H^{-1},\delta)$-invariant, and let $U = \bigcup_{i = 1}^IT_i$.
    Let $K_2 \subseteq G$ be as in Lemma \ref{TheObviousLemma}, with respect to $U$ and $\frac{\epsilon}{16|J(H)|}$, and let $K = HTT^{-1}\cup K_2$. 
    Let $C = \bigcup_{i = 1}^IC(T_i)$, where $C(T_i)$, $i \in \{1,\cdots,I\}$ are the center sets of the tiling. Now consider a partition $C = \bigsqcup_{\vec{j} \in J(H)}D_{\vec{j}}$ for which we have

    \begin{equation}\label{EquationForPartition}
         \left|\gamma\left(Y_{\vec{j}}\right)-\sum_{i = 1}^I\sum_{a \in D_{\vec{j}}\cap C(T_i)}\nu(T_ia)\right| < \frac{\epsilon}{8|J(H)|}\text{ for all }\vec{j} \in J(H).
    \end{equation} 
    Furthermore, we may assume without loss of generality that $D_{\vec{j}} = \emptyset$ if $\gamma(Y_{\vec{j}}) = 0$. To see that the choice of $D_{\vec{j}}$ can be made independently of the $(K,\delta)$-invariant measure $\nu$, we observe that $D_{\vec{j}}$ can be chosen by only making use of the fact that $\nu(Uc) < \frac{\epsilon}{8|J(H)|}$ for all $c \in G$. For each $\vec{j} \in J(H)$, let $D_{\vec{j}} = \bigcap_{h \in H}D_{j_h,h}$. 
    For $a \in C(T_i)\cap D_{\vec{j}}$ and $g \in T_i$, let $c_{ga} = f_{y_{\vec{j}}}(\tau_{ga}x_{\vec{j}})$. 
    Using Lemma \ref{GoodTilingLemma}, let $D \subseteq G$ be a finite union of tiles of $\mathcal{T}$ for which $\nu(D) > 1-4\sqrt{\delta}$ and for every tile $T \subseteq D$ the probability measure $\nu_T$ is $(K_1,\sqrt{\delta}|K_1|)$-invariant. 
    Let $C_i = C(T_i)\cap D$. Let us now verify that Equation \eqref{KeyLemmaEquation2} holds. Fix $h \in H$ and observe that

    \begin{alignat*}{2}
        &\int_Gc_{hg}\overline{c_g}d\nu(g) \overset{4M^2\sqrt{\delta}}{=} \int_Dc_{hg}c_gd\nu(g) = \sum_{i = 1}^I\sum_{a \in C_i}\int_{T_i}c_{hga}\overline{c_{ga}}d\nu(ga)\\
        \overset{4M^2\delta}{=} & \sum_{i = 1}^I\sum_{a \in C_i}\int_{T_i\cap h^{-1}T_i}c_{hga}\overline{c_{ga}}d\nu(ga) = \sum_{\vec{j} \in J(H)}\sum_{i = 1}^I\sum_{a \in C_i\cap D_{\vec{j}}}\int_{T_ia\cap h^{-1}T_ia}f_{y_{\vec{j}}}(\tau_{hg}x_{\vec{j}})\overline{f_{y_{\vec{j}}}(\tau_{g}x_{\vec{j}})}d\nu(g)\\
        \overset{4M^2\delta}{=} & \sum_{\vec{j} \in J(H)}\sum_{i = 1}^I\sum_{a \in C_i\cap D_{\vec{j}}}\int_{T_ia}f_{y_{\vec{j}}}(\tau_{hg}x_{\vec{j}})\overline{f_{y_{\vec{j}}}(\tau_{g}x_{\vec{j}})}d\nu(g)\\
        = &\sum_{\vec{j} \in J(H)}\sum_{i = 1}^I\sum_{a \in C_i\cap D_{\vec{j}}}\nu(T_ia)\int_{T_ia}f_{y_{\vec{j}}}(\tau_{hg}x_{\vec{j}})\overline{f_{y_{\vec{j}}}(\tau_{g}x_{\vec{j}})}d\nu_{T_ia}(g)\\
        \overset{\frac{\epsilon}{8}}{=} &\sum_{\vec{j} \in J(H)}\sum_{i = 1}^I\sum_{a \in C_i\cap D_{\vec{j}}}\nu(T_ia)\int_{X_{y_{\vec{j}}}}\tau_hf_{y_{\vec{j}}}\overline{f_{y_{\vec{j}}}}d\mu_{y_{\vec{j}}} \overset{4M^2\sqrt{\delta}}{=} \sum_{\vec{j} \in J(H)}\sum_{i = 1}^I\sum_{a \in C(T_i)\cap D_{\vec{j}}}\nu(T_ia)f_h(y_{\vec{j}})\\
        \overset{\frac{\epsilon}{8}}{=} & \sum_{\vec{j} \in J(H)}\sum_{i = 1}^I\sum_{a \in C(T_i)\cap D_{\vec{j}}}\nu(T_ia)S_h(y_{\vec{j}})\overset{\frac{\epsilon}{8}}{=} \sum_{\vec{j} \in J(H)}\gamma(Y_{\vec{j}})S_h(y_{\vec{j}}) = \sum_{j = 1}^{J_h}\gamma(Y_{j,h})w_{j,h}\\
        =& \int_YS_hd\gamma \overset{\frac{\epsilon}{8}}{=} \int_Yf_hd\gamma \overset{\frac{\epsilon}{8}}{=} \langle \tau_hf,f\rangle.
    \end{alignat*}
    Lastly, we will verify that

    \begin{equation}
        ||(c_{hg})_{g \in G}||_\infty \ge ||\tau_hf\overline{f}||_\infty-\frac{\epsilon}{2}.
    \end{equation}
    Pick $\vec{j} \in J(H)$ such that $||\tau_hf_{y_{\vec{j}}}\overline{f_{y_{\vec{j}}}}||_\infty \ge ||\tau_hf\overline{f}||_\infty-\epsilon$. Since any tile $T$ of $\mathcal{T}$ is $(K_{\vec{j},h}H^{-1},\frac{1}{2}\delta_{\vec{j},h})$-invariant, we see that $T\cap hT$ is $(K_{\vec{j},h},\delta_{\vec{j},h})$-invariant. Since $x_{\vec{j}} \in A_{\vec{j},h}$, we see that for $a \in C(T_i)\cap D_{\vec{j}}$ we have

    \begin{equation}
         \sup_{g \in T_ia\cap h^{-1}T_ia}|c_{hg}\overline{c_g}| = \sup_{g \in T_ia\cap h^{-1}T_ia}|f_{y_{\vec{j}}}(\tau_{hg}x_{\vec{j}})\overline{f_{y_{\vec{j}}}(\tau_gx_{\vec{j}})}| > ||\tau_hf_{y_{\vec{j}}}\overline{f_{y_{\vec{j}}}}||_\infty-\frac{\epsilon}{2}.
    \end{equation}
\end{proof}

\begin{theorem}\label{RepresentationTheoremForAmenableGroups}
   Let $G$ be a countably infinite amenable group, let $(\nu_n)_{n = 1}^\infty$ be a Reiter sequence, and let $(X,\mathscr{B},\mu,(\tau_g)_{g \in G})$ be a measure preserving system. Given $f \in L^2(X,\mu)$ and a dense set $R \subseteq \text{Range}(f)$, there exists a sequence of complex numbers $(c_g)_{g \in G}$ taking values in $R$ satisfying

   \begin{alignat}{2}
       &\lim_{n\rightarrow\infty}\int_G|c_g|^2d\nu_n = ||f||_2^2,\ \lim_{n\rightarrow\infty}\int_Gc_gd\nu_n = \int_Xfd\mu,\text{ and }\label{FvdCIsvdCEquation1}\\
       &\lim_{n\rightarrow\infty}\int_Gc_{hg}\overline{c_g}d\nu_n = \langle \tau_hf,f\rangle\text{ for all }h \in G.\label{FvdCIsvdCEquation2}
   \end{alignat}
   Furthermore, if $f \in L^\infty(X,\mu)$, then for any $h_1,\cdots,h_\ell \in G$ and $t_1,\cdots,t_\ell \in \mathbb{N}$ we have

   \begin{alignat}{2}
       &\lim_{n\rightarrow\infty}\int_Gd_{h_1g,1}^{t_1}\cdots d_{h_\ell g,\ell}^{t_\ell}d\nu_n = \int_X \tau_{h_1}f_1^{t_1}\cdots \tau_{h_\ell}f_\ell^{t_\ell}d\mu\text{, and}\label{FvdCIsvdCEquation3}\\
       &||(d_{h_1g,1}^{t_1}\cdots d_{h_\ell g,\ell}^{t_\ell})_{g \in G}||_\infty = ||\tau_{h_1}f_1^{t_1}\cdots \tau_{h_\ell}f_\ell^{t_\ell}||_\infty,\label{FvdCIsvdCEquation4}
   \end{alignat}   
   where $f_i \in \{f,\overline{f}\}$ and $(d_{g,i})_{g \in G} \in \left\{(c_g)_{g \in G},(\overline{c}_g)_{g \in G}\right\}$, and $f_i = f$ if and only if $(d_{g,i})_{g \in G} = (c_g)_{g \in G}$.
\end{theorem}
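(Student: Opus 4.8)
The plan is to obtain this theorem as the asymptotic (infinitary) counterpart of Lemma~\ref{KeyLemmaForAmenableGroups} by a diagonal argument, gluing the finitistic sequences produced there along a congruent sequence of tilings. First I would fix an exhaustion $e \in H_1 \subseteq H_2 \subseteq \cdots$ of $G$ with $\bigcup_k H_k = G$ and a sequence $\epsilon_k \downarrow 0$, and apply Lemma~\ref{KeyLemmaForAmenableGroups} to each pair $(H_k,\epsilon_k)$ to record the resulting $\delta_k > 0$ and finite $K_k \subseteq G$, together with the sup-norm bound of each level-$k$ sequence (controlled by $f$ and $\epsilon_k$, and uniformly of size $\approx \|f\|_\infty$ in the $L^\infty$ case). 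Since $(\nu_n)_{n=1}^\infty$ is a Reiter sequence, for each $k$ there is an $N_k$, which I take strictly increasing, such that $\nu_n$ is $(K_k,\delta_k)$-invariant for all $n \ge N_k$; this is the mechanism that will convert the per-level approximate identities into exact limits once a single global sequence is in hand.

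To produce that single sequence I would invoke Theorem~\ref{CongruentTilingTheorem} to build one congruent sequence of tilings $(\mathcal{T}_k)_{k=1}^\infty$ whose scale-$k$ shapes are $(\widetilde K_k,\widetilde\epsilon_k)$-invariant, where $(\widetilde K_k,\widetilde\epsilon_k)$ is chosen strong enough, via Lemmas~\ref{GoodTilingLemma} and~\ref{TheObviousLemma}, that each scale-$k$ tile is invariant enough to host the level-$k$ construction of Lemma~\ref{KeyLemmaForAmenableGroups} internally and that the interfaces between tiles are negligible. I would then define $(c_g)_{g \in G} \subseteq R$ once and for all by installing, inside each tile, the ergodic-orbit values exactly as in the proof of the Key Lemma, using congruency of $(\mathcal{T}_k)$ to organize the assignment hierarchically so that every scale-$k$ tile, being a union of finer tiles, carries the level-$k$ statistics. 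Finally, for the given Reiter sequence I would choose $k(n) \to \infty$ slowly enough that $\nu_n$ remains $(\widetilde K_{k(n)},\widetilde\epsilon_{k(n)})$-invariant.

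The limit is then evaluated exactly as the averages inside the Key Lemma: decompose $\int_G c_{hg}\overline{c_g}\,d\nu_n$ over the scale-$k(n)$ tiles, use Lemma~\ref{GoodTilingLemma}(ii) to pass to the good sub-collection $D$ on which $\nu_n(D) > 1 - o(1)$ and each normalized $\nu_{n,T}$ is nearly invariant, so that each such tile contributes $\langle T_h f,f\rangle$ up to $\epsilon_{k(n)}$, while the bad tiles and the boundary terms, where $g$ and $hg$ lie in different tiles, contribute only $O(\widetilde\epsilon_{k(n)})$ (times the square of the local sup-norm) by Lemma~\ref{GoodTilingLemma}(i). Letting $n \to \infty$ yields \eqref{FvdCIsvdCEquation2}, and the same bookkeeping yields \eqref{FvdCIsvdCEquation1}. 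The higher-moment identities \eqref{FvdCIsvdCEquation3} and the sup-norm identity \eqref{FvdCIsvdCEquation4} come essentially for free in the $L^\infty$ case, since Lemma~\ref{KeyLemmaForAmenableGroups} already supplies them at scale $k$ for all $h_i \in H_k$ and $t_i \le |H_k|$, and $H_k \uparrow G$ together with $|H_k| \to \infty$ eventually capture any prescribed $h_1,\dots,h_\ell \in G$ and $t_1,\dots,t_\ell \in \mathbb{N}$.

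I expect the main obstacle to be the global definition of the single sequence $(c_g)$: one must arrange that it simultaneously realizes the level-$k$ statistics on every scale-$k$ tile while respecting congruency, so that the hierarchical assignment is well defined and the finer scales are not overwritten by the coarser ones, and one must verify that the interface contributions between regions carrying different assignments vanish in the $\nu_n$-limit uniformly over $h \in H_{k(n)}$. These error terms are of the same type as the $O(M^2\sqrt{\delta})$ and $O(M^2\delta)$ terms already controlled inside the proof of Lemma~\ref{KeyLemmaForAmenableGroups}, so the difficulty is organizational bookkeeping rather than a genuinely new estimate; the $L^2$-versus-$L^\infty$ distinction, handled by the density of $R$ in $\mathrm{Range}(f)$ and a preliminary truncation of $f$, is a minor additional point.
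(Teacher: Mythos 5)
Your overall strategy coincides with the paper's: diagonalize Lemma \ref{KeyLemmaForAmenableGroups} over an exhaustion $H_1\subseteq H_2\subseteq\cdots$ using a congruent sequence of tilings from Theorem \ref{CongruentTilingTheorem} together with Lemmas \ref{GoodTilingLemma} and \ref{TheObviousLemma} and the asymptotic invariance of $(\nu_n)$, and your evaluation step (decompose $\int_G c_{hg}\overline{c_g}\,d\nu_n$ over tiles, pass to the good union $D$ on which the conditional measures $\nu_{n,T}$ are nearly invariant, discard bad tiles and boundary terms) is exactly the paper's. However, there is a genuine gap at the one step you flag and then dismiss as bookkeeping: the global definition of the single sequence $(c_g)$. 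Your plan is that every scale-$k$ tile should carry the level-$k$ statistics, simultaneously for all $k$. This is not realizable: the level-$k$ value $c_{g,k}$ produced by the Key Lemma at a fixed $g$ varies with $k$, so a scale-$(k+1)$ tile cannot carry the level-$(k+1)$ data while its constituent scale-$k$ subtiles carry the level-$k$ data --- the coarser assignment necessarily overwrites the finer one. Choosing $k(n)\to\infty$ slowly enough that $\nu_n$ is $(\widetilde K_{k(n)},\widetilde\epsilon_{k(n)})$-invariant does not repair this, because invariance of $\nu_n$ says nothing about which level's data the sequence actually takes on the region where $\nu_n$ concentrates.

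The paper's resolution is a spatial--temporal interleaving absent from your proposal. One inductively builds an increasing sequence of sets $W_q$, each a union of tiles of $\mathcal{T}_{q+1}$, and installs the level-$(q-1)$ sequence only on the annulus $W_q\setminus W_{q-1}$; simultaneously one chooses thresholds $N_q$ (via Lemma \ref{TheObviousLemma} and the sets $D_{n,q+1}$ from Lemma \ref{GoodTilingLemma}) so that for $N_q<n\le N_{q+1}$ the measure $\nu_n$ gives mass $1-o(1)$ to $W_{q+1}\setminus W_{q-1}$, i.e., to the two annuli carrying levels $q$ and $q-1$ only. Congruence of the tilings is then used precisely for the older annulus: a scale-$(q+1)$ tile inside $W_q\setminus W_{q-1}$ is refined into scale-$q$ tiles, on most of which the conditional measure is invariant enough that the level-$(q-1)$ data returns $\langle T_hf,f\rangle$ up to $\epsilon_{q-1}$. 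One further needs the sup-norm control $\|(c_{g,q})_{g\in G}\|_\infty<2^q$ played against the decay $\nu_n(W_m^c)\lesssim 2^{-4m}\delta_m$ to kill the far-out annuli in the $L^2$ case. This interleaving of the sets $W_q$ with the times $N_q$ is the heart of the proof, not organizational bookkeeping, and without it your $(c_g)$ is not well defined.
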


\begin{proof}
We give the proof of Equation \eqref{FvdCIsvdCEquation2} and remark that the proof of Equations \eqref{FvdCIsvdCEquation3} and \eqref{FvdCIsvdCEquation4} is similar. 
Let us fix an exhaustion $\{e\} \subseteq H_1 \subseteq H_2 \subseteq \cdots$ of $G$ by finite sets. 
Let $(\epsilon_q)_{q = 1}^\infty$ be a sequence decreasing to $0$, and let $(c_{g,q})_{g \in G}$ satisfy the conclusion of Lemma \ref{KeyLemmaForAmenableGroups} with respect to $f,\epsilon_q,$ and $H_q$. 
Furthermore, by allowing $\epsilon_q$ to tend to $0$ slowly enough, we assume without loss of generality that $||(c_{g,q})_{g \in G}||_\infty < 2^q$ for all $q \in \mathbb{N}$. 

Now we will construct the sequence $(c_g)_{g \in G}$ by an inductive process. 
To do this, we will also have to inductively construct a congruent sequence of tilings $(\mathcal{T}_q)_{q = 1}^\infty$, a sequence of positive real numbers $(\delta_n)_{n = 1}^\infty$ tending to $0$, an increasing sequence $(N_q)_{q = 1}^\infty \subseteq \mathbb{N}$, and increasing sequences $(V_q)_{q = 1}^\infty,(W_q)_{q = 1}^\infty,$ and $(K_n)_{n = 1}^\infty$ of finite subsets of $G$. 
Let $(\mathcal{T}_n')_{n = 1}^\infty$ be given by Theorem \ref{CongruentTilingTheorem} with respect to $(\epsilon_n)_{n = 1}^\infty$ and $(\mathcal{H}_n)_{n = 1}^\infty$. 
For the base case of this inductive procedure, let $N_1,N_2 \in \mathbb{N}$ and $\{e\} \subseteq V_1 \subseteq W_1 \subseteq V_2 \subseteq G$ and $\delta_1 \ge \delta_2 > 0$ all be arbitrary, then let $\mathcal{T}_1 = \mathcal{T}_1'$ and $\mathcal{T}_2 = \mathcal{T}_2'$. 
For $1 \le n \le N_2$, let $K_n$ be arbitrary. 
For $g \in W_1$, let $c_g \in R$ be arbitrary.
For the inductive step with $q \ge 2$, we will construct $N_{q+1}$, $V_{q+1}, W_q$, $\mathcal{T}_{q+1}, \delta_{q+1}$, define $K_n$ for $N_q < n \le N_{q+1}$, and define $c_g$ for $g \in W_q\setminus W_{q-1}$. 

Let $K_{q+1}$ and $\delta_{q+1}$ be as in Lemma \ref{KeyLemmaForAmenableGroups} with respect to $f$, $\epsilon_{q+1}$, and $H_{q+1}$. Let $\mathcal{T}_{q+1} = \mathcal{T}_k'$ for a value of $k$ so large that each tile is $(K_{q+1},\delta_{q+1}^2)$-invariant. 
Let the shapes of $\mathcal{T}_{q+1}$ be $\{\tau_{q+1,i}\}_{i = 1}^{I_{q+1}}$ and let $U_{q+1} = \bigcup_{i = 1}^{I_{q+1}}\tau_{q+1,i}$. 
Furthermore, we may assume without loss of generality that $K_{q+1} \supseteq K_qU_qU_q^{-1}$ and $\delta_{q+1} < 2^{-8q}\delta_q^2I_q^{-1}|U_q|^{-1}|K_q|^{-2}$. 
Let $W_q$ denote the union of all tiles of $\mathcal{T}_{q+1}$ that intersect $V_q$.
Using Lemma \ref{TheObviousLemma} let $N_{q+1}$ be such that for $N_{q+1} < n$ we have $\nu_n(W_q) < \delta_q2^{-4q}$ and that $\nu_n$ is $(K_{q+1}U_{q+1}U_{q+1}^{-1},2^{-8q}\delta_{q+1}^2I_{q+1}^{-1}|U_{q+1}|^{-1})$-invariant. We recall that for $n \in \mathbb{N}$ and a finite set $F \subseteq G$ for which $\nu_n(F) \neq 0$, we define $\nu_{n,F}(A) = \frac{\nu_n(A\cap F)}{\nu_n(F)}$. 
For $n \le N_{q+1}$, let $D_{n,q+1}$ be a union of tiles of $\mathcal{T}_{q+1}$ for which $\nu_n(D_{n,q+1}) > 1-4\cdot2^{-4q-4}\delta_{q+1}$, and using Lemma \ref{GoodTilingLemma} we may assume for $N_q < n \le N_{q+1}$ that for each tile $T \subseteq D_{n,q+1}$, $\nu_{n,T}$ is $(K_q,2^{-4q}\delta_q)$-invariant. Let $V_{q+1} = H_q(W_q\cup\bigcup_{n = 1}^{N_{q+1}}D_{n,q+1})$. 
For $g \in W_q\setminus W_{q-1}$ we define $c_g = c_{g,q-1}$. We also observe that $\bigcup_{q = 1}^\infty W_q = G$, and that $H_qW_q \subseteq W_{q+1}$.

Now let $h \in G$ be arbitrary and let $q_h \in \mathbb{N}$ be such that $h \in H_{q_h}$. We see that for $q \ge q_h+1$ and $N_q < n \le N_{q+1}$ we have

\begin{alignat*}{2}
    &\left|\int_{W_{q+1}^c\cup W_{q-1}}c_{hg}\overline{c_g}d\nu_n(g)\right| \le \sum_{m = q+1}^\infty\int_{W_{m+1}\setminus W_m}\left|c_{hg}\overline{c_g}\right|d\nu_n(g)+\int_{W_{q-1}}\left|c_{hg}\overline{c_g}\right|d\nu_n(g)\\
    \le&\sum_{m = q+1}^\infty2^{2m+1}\nu_n(G\setminus W_m)+2^{2q-3}\nu_n(W_{q-1}) \le \sum_{m = q+1}^\infty 2^{-2m+3}\delta_m+2^{-2q+1}\delta_{q-1} \le \delta_{q-1}.
\end{alignat*}
Next, we observe that if $T$ is a tile of $\mathcal{T}_{q+1}$ contained in $D_{n,q+1}\cap(W_{q+1}\setminus W_q)$, then $\nu_{n,T}$ is $(K_q,2^{-4q}\delta_q)$-invariant, so by Lemma \ref{KeyLemmaForAmenableGroups} we have

\begin{equation}\label{ACalculationForTheMainResult}
     \int_{T}c_{hg}\overline{c_g}d\nu_{n,T}(g) = \int_Gc_{hg}\overline{c_g}d\nu_{n,T}(g) \overset{\delta_q}{=} \int_Gc_{hg,q}\overline{c_{g,q}}d\nu_{n,T}(g) \overset{\epsilon_q}{=} \langle \tau_hf,f\rangle.
\end{equation}
Now let us suppose that $T$ is a tile of $\mathcal{T}_{q+1}$ contained in $D_{n,q+1}\cap(W_q\setminus W_{q-1})$. Since $\nu_{n,T}$ is $(K_q,2^{-4q}\delta_q)$-invariant we may apply Lemma \ref{GoodTilingLemma} to obtain a finite union of tiles of $\mathcal{T}_q$ that we denote by $D_T$ for which $\nu_{n,T}(D_T) > 1-2^{-6q+6}\delta_{q-1}$, such that if $T_0$ is a tile of $\mathcal{T}_q$ that is contained in $D_T$, then $\nu_{n,T_0} = (\nu_{n,T})_{T_0}$ is $(K_{q-1},2^{-6q+4}\delta_q)$-invariant. As in Equation \eqref{ACalculationForTheMainResult}, we have

\begin{equation}
    \left|\int_{T_0}c_{hg}\overline{c_g}d\nu_{n,T_0}(g)-\langle \tau_hf,f\rangle\right| < \delta_{q-1}+\epsilon_{q-1}.
\end{equation}
Consequently, we see that for $q > \log_2(1+||f||_2)$, we have

\begin{alignat*}{2}
    &\int_Tc_{hg}\overline{c_g}d\nu_{n,T}(g) \overset{\delta_{q-1}}{=} \int_{D_T}c_{hg}\overline{c_g}d\nu_{n,T}(g)\\
    = &\sum_{T' \in D_T}\nu_{n,T}(T')\int_{T'}c_{hg}\overline{c_g}d\nu_{n,T'}(g) \overset{\delta_{q-1}+\epsilon_{q-1}}{=} \sum_{T' \in D_T}\nu_{n,T}(T')\langle \tau_hf,f\rangle \overset{\delta_{q-1}}{=} \langle \tau_hf,f\rangle.
\end{alignat*}
Putting together the above pieces, we see that for $q \ge q_h+1+\log_2(1+||f||_2)$ and $N_q < n \le N_{q+1}$, we have

\begin{alignat*}{2}
    &\int_Gc_{hg}\overline{c_g}d\nu_n(g) \overset{\delta_{q-1}} = \int_{W_{q+1}\setminus W_q}c_{hg}\overline{c_g}d\nu_n(g)+\int_{W_q\setminus W_{q-1}}c_{hg}\overline{c_g}d\nu_n(g)\\
    \overset{\delta_{q+1}}{=}& \int_{D_{n,q+1}\cap(W_{q+1}\setminus W_q)}c_{hg}\overline{c_g}d\nu_n(g)+\int_{D_{n,q+1}\cap(W_q\setminus W_{q-1})}c_{hg}\overline{c_g}d\nu_n(g)\\
    =&\sum_{T \in D_{n,q+1}\cap(W_{q+1}\setminus W_q)}\nu_n(T)\int_Tc_{hg}\overline{c_g}d\nu_{n,T}+\sum_{T \in D_{n,q+1}\cap(W_q\setminus W_{q-1})}\nu_n(T)\int_Tc_{hg}\overline{c_g}d\nu_{n,T}\\
    \overset{3\delta_{q-1}+\epsilon_{q-1}}{=}&\sum_{T \in D_{n,q+1}\cap(W_{q+1}\setminus W_q)}\nu_n(T)\langle \tau_hf,f\rangle+\sum_{T \in D_{n,q+1}\cap(W_q\setminus W_{q-1})}\nu_n(T)\langle \tau_hf,f\rangle \overset{\delta_{q+1}}{=} \langle \tau_hf,f\rangle
\end{alignat*}
\end{proof}

Our next lemma is well known in the folklore, but we record it here for the sake of concreteness.

\begin{lemma}\label{Modulus1RepresentationForAbelianGroups}
    Let $G$ be a countably infinite abelian group and let $\nu$ be a probability measure on $\widehat{G}$. Let $S(G) \subseteq \mathbb{S}^1$ be the smallest closed set that contains the range of all characters of $G$. There exists a measure preserving system $\mathcal{X} := (X,\mathscr{B},\mu,(\tau_g)_{g \in G})$ and a measurable $f:X\rightarrow S(G)$ for which $\hat{\nu}(h) = \langle \tau_hf,f\rangle$ and $\nu(\{0\}) = \int_Xfd\mu$. Furthermore, the maximal spectral type of $\mathcal{X}$ is $\sum_{n \in \mathbb{Z}}\nu_n$, where $\nu_n(E) = \nu(\{x \in \widehat{G}\ |\ x^n \in E\})$.
\end{lemma}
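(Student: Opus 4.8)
The plan is to realize the measure $\nu$ on $\widehat{G}$ as the spectral measure of a single cyclic vector in a concrete Koopman representation, and then transfer this to a genuine point transformation via a skew-product-type construction so that the cyclic vector becomes an honest $S(G)$-valued function. First I would recall the abstract picture: since $\nu$ is a probability measure on the dual group $\widehat{G}$, there is a natural unitary representation of $G$ on $\mathcal{H} = L^2(\widehat{G},\nu)$ given by the multiplication operators $(M_g\psi)(x) = x(g)\psi(x)$, where $x(g) \in \mathbb{S}^1$ denotes the value of the character $x \in \widehat{G}$ at $g \in G$. The constant function $\mathbf{1}$ is then a cyclic vector (by Stone--Weierstrass, the characters $x\mapsto x(g)$ generate a dense subalgebra), and one computes $\langle M_h \mathbf{1}, \mathbf{1}\rangle = \int_{\widehat{G}} x(h)\,d\nu(x) = \hat\nu(h)$, which is exactly the positive definite function we wish to represent. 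This is the same maneuver used in the discussion preceding Lemma \ref{KeyLemmaForAmenableGroups} for $G=\mathbb{Z}$.

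The difficulty is that $M_h$ is a multiplication operator rather than a Koopman operator, and the cyclic vector $\mathbf{1}$ is constant rather than $S(G)$-valued. To fix this I would pass to the product space $X = \widehat{G}\times Z$, where $Z$ is an auxiliary probability space carrying the ``phase'' information, and define an action $T_g(x,z) = (x, x(g)\cdot z)$ that rotates the second coordinate by the character value $x(g)$. The natural model for $Z$ is a compact group into which $S(G)$ embeds, equipped with Haar measure, so that rotation is measure preserving; I would then let $f(x,z) = z$ (the $S(G)$-valued coordinate projection, suitably normalized), so that $\langle T_h f, f\rangle = \int_{\widehat{G}} x(h)\,d\nu(x) = \hat\nu(h)$, while the identity character $x = 0$ contributes $\nu(\{0\}) = \int_X f\,d\mu$ to the mean, as required. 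Care must be taken that $f$ actually takes values in $S(G)$: since $S(G)$ is the smallest closed set containing the ranges of all characters, the fibered rotation by $x(g)$ keeps the relevant coordinate inside the closed subgroup of $\mathbb{S}^1$ generated by $S(G)$, and one arranges $Z$ accordingly.

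Finally I would verify the maximal spectral type claim. Having built $\mathcal{X}$, its $L^2$ space decomposes into the $T$-invariant subspaces spanned by the tensor powers of $f$ and $\overline{f}$; the function $f^{\otimes n}$ (morally $z^n$) transforms under $T_g$ by the multiplier $x(g)^n = x^n(g)$, so its spectral measure is the pushforward $\nu_n(E) = \nu(\{x : x^n \in E\})$ in the sense defined in the statement. Summing these contributions over $n \in \mathbb{Z}$, and checking that together they exhaust the Koopman representation (again via a Stone--Weierstrass density argument on the fibers), yields that the maximal spectral type is $\sum_{n\in\mathbb{Z}}\nu_n$. I expect the main obstacle to be the bookkeeping in the skew-product construction: precisely identifying the auxiliary space $Z$ and the measure on it so that $f$ is genuinely $S(G)$-valued, the rotation is measure preserving, and the spectral decomposition comes out exactly as the stated sum. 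The computation of $\langle T_h f, f\rangle$ itself is routine once the construction is set up correctly.
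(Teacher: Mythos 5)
Your construction is the paper's construction: the paper takes $X = \widehat{G}\times S(G)$ with $\mu = \nu\times m$, where $m$ is the normalized Haar measure of the compact group $S(G)$ (which is a closed subgroup of $\mathbb{S}^1$, hence finite cyclic or all of $\mathbb{S}^1$, so you can take your auxiliary space $Z$ to be $S(G)$ itself), the skew action $T_g(\chi,x)=(\chi,\chi(g)x)$, and it verifies the spectral type exactly as you sketch, by computing the spectral measure of each character $(\chi,x)\mapsto\chi(g)x^s$ of the compact abelian group $X$ and invoking density of characters in $L^2(X,\mu)$. (Minor point there: the cyclic subspaces of the powers $f^s$ alone do not exhaust $L^2$ --- the cyclic space of $x^s$ only sees functions of $\chi$ through $\chi^s$ --- so you do need the full family $\chi(g)x^s$; your Stone--Weierstrass parenthetical covers this.)

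There is, however, one step that fails as written. With $f(\chi,x)=x$ the coordinate projection, you claim that the fiber over the trivial character contributes $\nu(\{0\})$ to the mean, so that $\int_X f\,d\mu=\nu(\{0\})$. In fact
\begin{equation*}
\int_X f\,d\mu \;=\; \int_{\widehat{G}}\Bigl(\int_{S(G)}x\,dm(x)\Bigr)d\nu(\chi) \;=\; 0,
\end{equation*}
since the Haar integral of the identity coordinate over the nontrivial closed subgroup $S(G)\subseteq\mathbb{S}^1$ vanishes, and it vanishes on the fiber over the trivial character just as on every other fiber. Since the identity $\int_X f\,d\mu=\nu(\{0\})$ is the part of the lemma actually used in the proof that (v) implies (iii) of Theorem \ref{FvdCIsvdCForAmenableGroups}, this is not cosmetic. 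The repair is the one the paper makes: set $f(\chi,x)=x$ for $\chi\neq e_{\widehat{G}}$ but $f(e_{\widehat{G}},x)=1$ (still a value in $S(G)$). This leaves $\langle T_hf,f\rangle=\hat{\nu}(h)$ untouched, because on the trivial fiber $T_hf\cdot\overline{f}=1=\chi(h)$ in any case, while now the trivial fiber integrates to $1$ and every other fiber to $0$, giving $\int_X f\,d\mu=\nu(\{0\})$ as required.
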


\begin{proof}
    Let $X = \widehat{G}\times S(G)$, let $\mathscr{B}$ be the Borel $\sigma$-algebra, let $\tau:X\rightarrow X$ be given by $\tau_g(\chi,x) = (\chi,\chi(g)x)$, and let $\mu = \nu\times m$, where $m$ is the normalized Haar measure of the compact group $S(G)$. Let $f(\chi,x) = x$ if $\chi \neq e_{\widehat{G}}$, and $f(e_{\widehat{G}},x) = 1$. We see that

    \begin{alignat*}{2}
        &\langle \tau_hf,f\rangle = \int_{\widehat{G}}\int_{S(G)}\chi(h)dm(x)d\nu(\chi) = \int_{\widehat{G}}\chi(h)d\nu(\chi) = \hat{\nu}(h) = \phi(h)\text{, and}\\
        &\int_Xfd\mu = \int_{\widehat{G}}\int_{S(G)}f(\chi,x)dm(x)d\nu(\chi) = \int_{\widehat{G}}\mathbbm{1}_{e_{\widehat{G}}}(\chi)d\nu(\chi) = \nu(\{0\}).
    \end{alignat*}
    It only remains to show that the maximal spectral type of $\mathcal{X}$ is of the given form. Since $X$ is a compact abelian group, the characters of $X$ have a dense span in $L^2(X,\mu)$, so it suffices to show that the spectral measure of each character is some $\nu_n$. We note that $S(G)$ is either a finite set, or it is $\mathbb{T}$, so any character on $S(G)$ is of the form $x\mapsto x^s$ for some $s \in \mathbb{Z}$. Let $g \in G = \widehat{\widehat{G}}$ and $s \in \mathbb{Z}$ both be arbitrary, let $f'(\chi,x) = \chi(g)x^s$, and observe that

    \begin{equation}
        \langle \tau_hf',f'\rangle = \int_{\widehat{G}\times S(G)}\chi(g)(\chi(h)x)^s\overline{\chi(g)x^s}d\mu(\chi,x) = \int_{\widehat{G}\times S(G)}\chi(h)^sd\mu(\chi,x) = \hat{\nu}_s(h).
    \end{equation}
\end{proof}

\begin{theorem}\label{FvdCIsvdCForAmenableGroups}
    Let $G$ be a countably infinite amenable group, let $(\nu_n)_{n = 1}^\infty$ be a Reiter sequence, and let $V \subseteq G$. Items (i)-(iii) are equivalent, items (iv) and (v) are equivalent, and if $G$ is abelian, then items (i)-(v) are equivalent.
    \begin{enumerate}[(i)]
        \item For any sequence $(u_g)_{g \in G}$ of complex numbers satisfying

        \begin{alignat}{2}
            &\limsup_{n\rightarrow\infty}\int_G|u_g|^2d\nu_n(g) < \infty,\quad \sup_{h \in G}\limsup_{n\rightarrow\infty}\left|\int_G(u_{hg}-u_g)d\nu_n(g)\right| = 0\text{, and }\label{ComplexNumbersSequenceEquation1}\\
            &\lim_{n\rightarrow\infty}\int_Gu_{vg}\overline{u_g}d\nu_n(g) = 0,
        \end{alignat}
        for all $v \in V$, we have

        \begin{equation}
            \lim_{n\rightarrow\infty}\int_Gu_gd\nu_n(g) = 0.
        \end{equation}
        
        \item For any separable Hilbert space and any sequence $(\xi_g)_{g \in G} \subseteq \mathcal{H}$ of vectors satisfying

        \begin{alignat}{2}
            &\limsup_{n\rightarrow\infty}\int_G||\xi_g||^2d\nu_n(g) < \infty,\quad \sup_{h \in G}\limsup_{n\rightarrow\infty}\left|\left|\int_G(\xi_{hg}-\xi_g)d\nu_n(g)\right|\right| = 0\text{, and }\label{HilbertSpaceSequenceEquation1}\\
            &\lim_{n\rightarrow\infty}\int_G\langle\xi_{vg},\xi_g\rangle d\nu_n(g) = 0,\label{HilbertSpaceSequenceEquation2}
        \end{alignat}
        for all $v \in V$, we have

        \begin{equation}
            \lim_{n\rightarrow\infty}\left|\left|\int_G\xi_gd\nu_n(g)\right|\right| = 0.
        \end{equation}

        \item For any measure preserving system $(X,\mathscr{B},\mu,(\tau_g)_{g \in G})$ and any $f \in L^2(X,\mu)$ satisfying $\langle \tau_vf,f\rangle = 0$ for all $v \in V$, we have $\int_Xfd\mu = 0$.

        \item For any sequence $(u_g)_{g \in G} \subseteq \mathbb{S}^1$ satisfying

        \begin{equation}
            \lim_{n\rightarrow\infty}\int_Gu_{vg}\overline{u_g}d\nu_n(g) = 0,\text{ for all }v \in V,\text{ we have }\lim_{n\rightarrow\infty}\int_Gu_gd\nu_n(g) = 0.
        \end{equation}

        \item For any measure preserving system $(X,\mathscr{B},\mu,(\tau_g)_{g \in G})$ and any $f:X\rightarrow\mathbb{S}^1$ satisfying $\langle \tau_vf,f\rangle = 0$ for all $v \in V$, we have $\int_Xfd\mu = 0$.
    \end{enumerate}
\end{theorem}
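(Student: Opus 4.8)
The plan is to split the work into the three advertised clusters and to route every implication through either Theorem~\ref{RepresentationTheoremForAmenableGroups} (going from dynamics to sequences), the GNS/ergodic realization in Theorem~\ref{ErgodicRepresentation} (going from sequences back to dynamics), or the forward Furstenberg correspondence. For the first cluster I would prove the cycle $(ii)\Rightarrow(i)\Rightarrow(iii)\Rightarrow(ii)$. Here $(ii)\Rightarrow(i)$ is the trivial specialization $\mathcal H=\mathbb C$. For $(i)\Rightarrow(iii)$, given an m.p.s.\ and $f\in L^2$ with $\langle T_vf,f\rangle=0$, I apply Theorem~\ref{RepresentationTheoremForAmenableGroups} to produce $(c_g)$ with $\lim\int|c_g|^2d\nu_n=\|f\|_2^2$, $\lim\int c_g d\nu_n=\int_X f d\mu$ and $\lim\int c_{hg}\overline{c_g}d\nu_n=\langle T_hf,f\rangle$; one then checks that $(c_g)$ satisfies the hypotheses of $(i)$, the only nontrivial point being the mean asymptotic invariance $\sup_h\limsup_n|\int(c_{hg}-c_g)d\nu_n|=0$, which follows from boundedness of $(c_g)$ together with the Reiter property of $(\nu_n)$ (for unbounded $f$ one first reduces to $f\in L^\infty$, where the product identities of Theorem~\ref{RepresentationTheoremForAmenableGroups} control the shifted second moments). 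Then $(i)$ forces $\int_X f d\mu=\lim\int c_g d\nu_n=0$. The implication $(iv)\Rightarrow(v)$ is the same argument: apply Theorem~\ref{RepresentationTheoremForAmenableGroups} with $R$ dense in $\mathrm{Range}(f)\subseteq\mathbb S^1$, so that the resulting $(c_g)$ lies in $\mathbb S^1$ and satisfies the hypotheses of $(iv)$, whence $\int_X f d\mu=0$.

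The substantive step is $(iii)\Rightarrow(ii)$. Given $(\xi_g)$ satisfying the hypotheses of $(ii)$, I pass to a subsequence $(n_k)$ along which $\int\langle\xi_{hg},\xi_g\rangle d\nu_{n_k}\to\phi(h)$ for every $h\in G$ (a diagonal extraction, using $|\int\langle\xi_{hg},\xi_g\rangle d\nu_n|\le(\int\|\xi_{hg}\|^2 d\nu_n)^{1/2}(\int\|\xi_g\|^2 d\nu_n)^{1/2}$) and along which $\|\int\xi_g d\nu_{n_k}\|^2\to L:=\limsup_n\|\int\xi_g d\nu_n\|^2$. A substitution $g=h_jg'$ combined with the asymptotic invariance of $\nu_{n_k}$ shows $\sum_{i,j}c_i\overline{c_j}\phi(h_ih_j^{-1})=\lim_k\int\|\sum_i c_i\xi_{h_ig}\|^2 d\nu_{n_k}\ge 0$, so $\phi$ is positive definite, and $\phi(v)=0$ for $v\in V$. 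Theorem~\ref{ErgodicRepresentation} then yields an ergodic system with $f\in L^2$ and $\langle T_hf,f\rangle=\phi(h)$; since $\phi(v)=0$, hypothesis $(iii)$ gives $\int f d\mu=0$, and ergodicity together with the mean ergodic theorem gives $\|Pf\|^2=\lim_m\int\phi(h)d\nu_m(h)=0$. To transfer this to the original sequence I would invoke a van der Corput inequality for amenable groups: for finite $H\subseteq G$, $\|\int\xi_g d\nu_{n_k}\|^2\le|H|^{-2}\sum_{h,h'\in H}\int\langle\xi_{hg},\xi_{h'g}\rangle d\nu_{n_k}(g)+o_k(1)$, where the $o_k(1)$ comes from replacing $\int\xi_g d\nu_{n_k}$ by $|H|^{-1}\sum_{h\in H}\int\xi_{hg}d\nu_{n_k}$ using mean asymptotic invariance, followed by Cauchy--Schwarz. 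Letting $k\to\infty$ (so $\int\langle\xi_{hg},\xi_{h'g}\rangle d\nu_{n_k}\to\phi(h(h')^{-1})$) and then letting $H$ run through a F\o lner sequence identifies the right-hand side with $\|Pf\|^2$, giving $L\le\|Pf\|^2=0$ and hence $(ii)$. This van der Corput step is the main obstacle: the interchange of the limits in $k$ and in $H$, and the control of the shifted second moments $\int\|\xi_{hg}\|^2 d\nu_{n_k}$ when $(\xi_g)$ is unbounded, both require care.

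For the second cluster it remains to prove $(v)\Rightarrow(iv)$, and here I would use the ordinary (forward) Furstenberg correspondence rather than a representation theorem. Given $(u_g)\subseteq\mathbb S^1$ with vanishing $V$-correlations, suppose $\int u_g d\nu_{n_k}\to a$ with $\int u_{hg}\overline{u_g}d\nu_{n_k}\to\phi(h)$ for all $h$. Working in the compact space $(\mathbb S^1)^G$ with the action $(T_h\omega)(x)=\omega(xh)$ and the points $\omega^{(g)}(x)=u_{xg}$, one forms the weak-$*$ limit $\mu=\lim_k\int\delta_{\omega^{(g)}}d\nu_{n_k}(g)$; the left asymptotic invariance of $(\nu_n)$ makes $\mu$ invariant, the evaluation $f(\omega)=\omega(e)$ is an $\mathbb S^1$-valued continuous function with $\langle T_hf,f\rangle=\phi(h)$ and $\int f d\mu=a$, so $\phi(v)=0$ and $(v)$ forces $a=0$. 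This argument is clean because $u_g$ is bounded, so none of the moment difficulties of $(iii)\Rightarrow(ii)$ arise.

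Finally, to merge the clusters in the abelian case I would establish $(iii)\Leftrightarrow(v)$. The implication $(iii)\Rightarrow(v)$ is immediate since an $\mathbb S^1$-valued $f$ is a special $L^2$ function. For $(v)\Rightarrow(iii)$, given $f\in L^2$ with $\langle T_vf,f\rangle=0$, set $\phi(h)=\langle T_hf,f\rangle$; Bochner's theorem writes $\phi=\hat\nu$ for a finite measure $\nu$ on $\widehat G$, and Lemma~\ref{Modulus1RepresentationForAbelianGroups} (applied to $\nu/\|f\|_2^2$) provides an m.p.s.\ and an $\mathbb S^1$-valued $F$ with $\langle T_hF,F\rangle=\phi(h)/\|f\|_2^2$ and $\int F d\mu=\nu(\{e_{\widehat G}\})/\|f\|_2^2$. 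Since $\langle T_vF,F\rangle=0$, hypothesis $(v)$ gives $\nu(\{e_{\widehat G}\})=0$; and the computation $\lim_n\int\phi d\nu_n=\int_{\widehat G}\mathbbm 1[\chi=e_{\widehat G}]\,d\nu(\chi)=\nu(\{e_{\widehat G}\})$ identifies this with $\|Pf\|^2$, whence $|\int f d\mu|^2\le\|Pf\|^2=0$. Combined with $(i)\Leftrightarrow(ii)\Leftrightarrow(iii)$ and $(iv)\Leftrightarrow(v)$, this closes all five equivalences for abelian $G$.
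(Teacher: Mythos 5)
Most of your route coincides with the paper's: (ii)$\Rightarrow$(i) by specialization, (i)$\Rightarrow$(iii) and (iv)$\Rightarrow$(v) via Theorem \ref{RepresentationTheoremForAmenableGroups}, (v)$\Rightarrow$(iv) via a correspondence-principle compactification (you use the shift on $(\mathbb{S}^1)^G$ where the paper uses $\beta G$; both work because $(u_g)$ is bounded), and (v)$\Rightarrow$(iii) for abelian $G$ via Bochner plus Lemma \ref{Modulus1RepresentationForAbelianGroups}. The genuine gap is in (iii)$\Rightarrow$(ii), exactly at the point you flag but do not close. Your van der Corput inequality needs $\int_G\langle\xi_{hg},\xi_{h'g}\rangle d\nu_{n_k}(g)\to\phi(h(h')^{-1})$, and after the substitution $g\mapsto (h')^{-1}g$ the error is controlled by $\sum_{g}\|\xi_{hg}\|\,\|\xi_{h'g}\|\,|\nu_{n_k}(\{(h')^{-1}g\})-\nu_{n_k}(\{g\})|$. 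The Reiter property makes the total-variation factor small, but the hypothesis $\limsup_n\int_G\|\xi_g\|^2d\nu_n<\infty$ does not control the shifted moments $\int_G\|\xi_{hg}\|^2d\nu_n$: already for $G=\mathbb{Z}$ with $F_n=[a_n,a_n+n]$ and $a_n$ growing rapidly, a sequence supported on the points $a_n+n+1$ with $\|\xi_{a_n+n+1}\|^2=n^2$ has identically vanishing second moments along $(F_n)$ but shifted second moments tending to infinity. Truncating $\xi_g$ at height $R$ does not rescue this, because the error incurred in the correlation hypothesis after truncation is bounded by the same shifted moments. So the $o_k(1)$ in your inequality, and hence the conclusion $L\le\|Pf\|^2$, is unjustified for unbounded $(\xi_g)$. (The part of your argument producing $\phi$, realizing it ergodically via Theorem \ref{ErgodicRepresentation}, and deducing $M(\phi)=\|Pf\|^2=0$ from (iii) is fine; it is only the transfer back to $\|\int_G\xi_g\,d\nu_{n_k}\|$ that fails.)

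The paper's proof of (iii)$\Rightarrow$(ii) avoids this by never integrating the unbounded data against a translated measure. Along a subsequence with $\|\int_G\xi_g d\nu_{M_q}\|\to\epsilon>0$ it forms $\gamma_1(h)=\lim_q\int_G\langle\xi_{h^{-1}g},\xi_g\rangle d\nu_{M_q}$ and the analogous $\gamma_2$ for the recentred vectors $\xi_g-\int_G\xi\, d\nu_{M_q}$; the second condition in \eqref{HilbertSpaceSequenceEquation1} gives $\gamma_2=\gamma_1-\epsilon^2$, and since (iii) is equivalent to item \eqref{MeanOnAPFunctionsCharacterization1} of Theorem \ref{EquivalentCharacterizationsInCountableGroups}, the invariant mean $M$ on weakly almost periodic functions satisfies $M(\gamma_1)=0$, whence $M(\gamma_2)=-\epsilon^2<0$, contradicting nonnegativity of $M$ on positive definite functions. (Even there, the positive definiteness of $\gamma_1,\gamma_2$ is asserted rather than checked, and verifying it touches a milder version of the same substitution issue; but no quantitative limit of shifted correlations is required.) To keep your route through Theorem \ref{ErgodicRepresentation} you would need either to prove the vdC inequality under the stated hypotheses, which the example above makes doubtful, or to first reduce to bounded $(\xi_g)$, and your sketch does neither.
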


\begin{proof}
We first show that (iii)$\rightarrow$(ii). Let us assume for the sake of contradiction that Equations \eqref{HilbertSpaceSequenceEquation1}-\eqref{HilbertSpaceSequenceEquation2} are satisfied, but there is some $(M_q)_{q = 1}^\infty \subseteq \mathbb{N}$ for which

\begin{equation}
    \lim_{q\rightarrow\infty}\frac{1}{M_q}\left|\left|\int_G\xi_gd\nu_{M_q}(g)\right|\right| = \epsilon > 0.
\end{equation}
Let $S_{h,q} = \int_G\xi_{h^{-1}g}d\nu_{M_q}(g)$ and let $\xi_{g,q} = \xi_g-S_{e,q}$. By replacing $(M_q)_{q = 1}^\infty$ with a subsequence, we may assume without loss of generality that

\begin{alignat*}{2}
    &\gamma_1(h) := \lim_{q\rightarrow\infty}\int_G\langle\xi_{h^{-1}g},\xi_g\rangle d\nu_{M_q}(g)\text{ and }\gamma_2(h) := \lim_{q\rightarrow\infty}\int_G\langle \xi_{h^{-1}g,q},\xi_{g,q}\rangle d\nu_{M_q}(g)
\end{alignat*}
exist for all $h \in G$. To see that $(\gamma_1(h))_{h \in G}$ is a positive definite sequence, we see that for any $g_1,\cdots,g_n \in G$ and any $c_1,\cdots,c_n \in \mathbb{C}$, we have

\begin{alignat*}{2}
&\sum_{i,j = 1}^nc_i\overline{c_j}\gamma(g_j^{-1}g_i) = \sum_{i,j = 1}^nc_i\overline{c_j}\lim_{q\rightarrow\infty}\int_G\langle \xi_{g_i^{-1}g_jg},\xi_g\rangle d\nu_{M_q}(g) = \sum_{i,j = 1}^nc_i\overline{c_j}\lim_{q\rightarrow\infty}\int_G\langle \xi_{g_i^{-1}g},\xi_{g_j^{-1}g}\rangle d\nu_{M_q}(g)\\
=&\lim_{q\rightarrow\infty}\int_G\langle\sum_{i = 1}^nc_i\xi_{g_i^{-1}g},\sum_{j = 1}^nc_j\xi_{g_j^{-1}g} d\nu_{M_q}(g) \ge 0.
\end{alignat*}
A similar calculation shows that $(\gamma_2(h))_{h \in G}$ is also a positive definite sequence.
Using the second assumption in Equation \eqref{HilbertSpaceSequenceEquation1}, we see that

\begin{alignat*}{2}
    \gamma_2(h)&= \lim_{q\rightarrow\infty}\int_G\langle \xi_{h^{-1}g}-S_{e,q},\xi_g-S_{e,q}\rangle d\nu_{M_q}(g)\\
    &= \gamma_1(h)+\lim_{q\rightarrow\infty}\left(-\int_G\langle S_{e,q},\xi_g\rangle d\nu_{M_q}(g)-\int_G\langle \xi_{h^{-1}g},S_{e,q}\rangle d\nu_{M_q}(g)+\int_G\langle S_{e,q},S_{e,q}\rangle d\nu_{M_q}(g)\right)\\
    &= \gamma_1(h)+\lim_{q\rightarrow\infty}\left(-\langle S_{e,q},S_{e,q}\rangle -\langle S_{h,q},S_{e,q}\rangle+\langle S_{e,q},S_{e,q}\rangle\right) = \gamma_1(h)-\epsilon^2.
\end{alignat*}
We now use the equivalence of items \eqref{GeneralMPSvdC} and \eqref{MeanOnAPFunctionsCharacterization1} in Theorem \ref{EquivalentCharacterizationsInCountableGroups}. Letting $M$ denote the unique invariant mean on the set $W(G)$ of weakly almost periodic functions on $G$, we see that $\gamma_1(v) = \overline{\gamma_1(v^{-1})} = 0$ for all $v \in V$, so $M(\gamma_1) = 0$. It follows that $M(\gamma_2) = -\epsilon^2 < 0$, but this contradicts the fact that $M(\phi) \ge 0$ whenever $\phi$ is a positive definite function on $G$ (see Section \ref{SectionOnMeans}).

It is clear that (ii)$\rightarrow$(i). Then fact that (i)$\rightarrow$(iii) and (iv)$\rightarrow$(v) are a consequence of Theorem \ref{RepresentationTheoremForAmenableGroups}. To see that (v)$\rightarrow$(iv), we will assume familiarity with the Stone-\v{C}ech compactification $\beta G$ of $G$, and refer the reader to \cite{AlgebraInTheSCC} for background. 
For $n \in \mathbb{N}$, let $u:G\rightarrow\mathbb{C}$ be given by $u(g) = u_g$, and let $\tilde{u}:\beta G\rightarrow\mathbb{C}$ be the unique continuous extension of $u$. 
We see that each $\nu_n$ has a unique extension to a probability measure $\tilde{\nu}_n$ on $\beta G$. Let $\mu$ be any probability measure on $(\beta G,\mathscr{A})$ with $\mathscr{A}$ the Borel $\sigma$-algebra that is a weak$^*$ limit of the sequence $\left\{\tilde{\nu}_n\right\}_{n = 1}^\infty$, and let $\left\{\tilde{\nu}_{M_q}\right\}_{q = 1}^\infty$ be a subsequence converging to $\mu$.
Let $\tau_g:\beta G\rightarrow \beta G$ be given by $\tau_g(p) = g^{-1}\cdot p$,\footnote{It is worth noting that we are using different notation than in \cite{AlgebraInTheSCC} since we are assuming that $g^{-1}\cdot p$ is continuous with respect to the variable $p$ instead of the variable $g$. The necessity to do so stems from the fact that we chose to work with left-asymptotically invariant sequences of probability rather than right.}, hence measurable. 
Letting $\mathscr{B}$ be the countably generated $\sigma$-algebra of $\tilde{u}$ and $(\tau_g)_{g \in G}$, we see that $(\beta G,\mathscr{B},(\tau_g)_{g \in G},\mu)$ is isomorphic to a measure preserving system on a standard probability space. Lastly, we see that

\begin{equation}
    \langle \tau_v\tilde{u},\tilde{u}\rangle  = \overline{\langle \tau_{v^{-1}}\tilde{u},\tilde{u}\rangle} = \lim_{q\rightarrow\infty}\int_G\overline{u_{vg}}u_gd\nu_{M_q}\text{ and }\int_{\beta G}\tilde{u}d\mu = \lim_{q\rightarrow\infty}\int_Gu_gd\nu_{M_q}(g).
\end{equation}

It is clear that (iii)$\rightarrow$(v). Now let us show that (v)$\rightarrow$(iii) when $G$ is abelian. We see that if $(X,\mathscr{B},\mu,(\tau_g)_{g \in G})$ is a measure preserving system and $f \in L^2(X,\mu)$ is normalized so that $||f||_2 = 1$, then $\phi(g) = \langle \tau_gf,f\rangle$ is a positive definite sequence with $\phi(e) = 1$, so there exists a probability measure $\nu$ on $\widehat{G}$ for which $\hat{\nu}(g) = \langle \tau_gf,f\rangle$ and $\nu(\{0\}) = ||P_If||_2^2 \ge \left|\int_Xfd\mu\right|^2$. We may use Lemma \ref{Modulus1RepresentationForAbelianGroups} to obtain a measure preserving system $(Y,\mathscr{A},\mu',(S_g)_{g \in G})$ and a measurable $f':Y\rightarrow S(G)$ satisfying $\phi(g) = \langle S_gf',f'\rangle$ and $\nu(\{0\}) = \int_Yf'd\mu'$.
\end{proof}

\begin{remark}\label{RemarkAboutCompletingTheListOfEquivalences}
    Now let us consider an example to show why we need the second condition in Equations \eqref{ComplexNumbersSequenceEquation1} and \eqref{HilbertSpaceSequenceEquation1} in Theorem \ref{FvdCIsvdCForAmenableGroups} despite not needing these conditions in Theorem \ref{EquivalentCharacterizationsOfvdC}. 
    Let $G = \mathbb{Z}$ and consider the F\o lner sequence $F_n = [n^3,n^3+2n]$. For $m \in [n^3,n^3+n]$ let $u_m = 1$, let $u_{n^3+2n} = 1$, for $m \in [n^3+2n+1,n^3+3n]$ let $u_m = -n$, and let $u_m = 0$ for all other values of $m$. 
    We see that

    \begin{equation*}
        \lim_{n\rightarrow\infty}\frac{1}{|F_n|}\sum_{m \in F_n}|u_m|^2 = \lim_{n\rightarrow\infty}\frac{1}{|F_n|}\sum_{m \in F_n}u_m = \frac{1}{2}\text{ and }\lim_{n\rightarrow\infty}\frac{1}{|F_n|}\sum_{m \in F_n}u_{m+h}\overline{u_m} = 0\text{ for all }h \in \mathbb{N}.
    \end{equation*}

    Furthermore, in Theorem \ref{FvdCIsvdCForAmenableGroups}, we would like to show that (i)-(iv) are equivalent for any amenable group. This would follow from our proof provided the following questions has a positive answer for all amenable $G$.
\end{remark}

\begin{question}\label{MainConjecture}
    Let $G$ be a countable group and let $\phi:G\rightarrow\mathbb{C}$ be a positive definite sequence for which $\phi(e) = 1$. Does there exists a measure preserving system $(X,\mathscr{B},\mu,(\tau_g)_{g \in G})$ and a measurable $f:X\rightarrow\mathbb{S}^1$ for which the following holds:
    \begin{enumerate}[(i)]
        \item $\phi(h) = \langle \tau_hf,f\rangle$ for all $h \in G$.
        \item $\int_Xfd\mu = 0$ if and only if $f$ is orthogonal to the subspace of $L^2(X,\mu)$ of $T$-invariant functions.
    \end{enumerate} 
\end{question}
\section{Appendix: Properties of sets of operatorial recurrence}\label{Section:Appendix}
We begin with a list of the equivalent characterizations of vdC sets/sets of operatorial recurrence that were omitted from Theorem \ref{EquivalentCharacterizationsOfvdC} in Theorem \ref{EquivalentCharacterizationsOfvdCAppendix}. We then generalize most of these equivalences to the setting of countably infinite groups in Theorem \ref{EquivalentCharacterizationsInCountableGroups}, and some of them only to the setting of countably infinite abelian groups in Theorem \ref{SpectralCharacterizationsForAbelianGroups}. Lastly, in Theorem \ref{PropertiesOfSetsOfOperatorialRecurrence}, we list properties of sets of operatorial recurrence that follow from the work of Rodr\'iguez \cite{SaulsvdC}.

We mention that an important result in the study of sets of operatorial recurrence in $\mathbb{N}$ is Bourgain's construction \cite{vdCStrongerThanRecurrence} (see also \cite{RefiningBourgain}) of a set of measurable recurrence that is not a set of operatorial recurrence.\footnote{Bourgain used the term vdC set in his work.} While we do not study this construction here, we believe that our many equivalent formulations of sets of operatorial recurrence may help generalize Bourgain's construction to a larger class of groups, and shed more light on the difference between measurable and operatorial recurrence.

\begin{theorem}\label{EquivalentCharacterizationsOfvdCAppendix}
    For $V \subseteq \mathbb{N}$, the following are equivalent:
    \begin{enumerate}[(i)]
        \item\label{UniformDistributionvdCAppendix} $V$ is a vdC set.
        
        \item\label{OperatorialRecurrenceAppendix} $V$ is a set of operatorial recurrence.

        \item\label{pointmassat0vdc} For any probability measure $\mu$ on $[0,1]$ satisfying $\hat{\mu}(v) = 0$ for all $v \in V$, we have $\mu(\{0\}) = 0$.

        \item\label{continuousmeasurevdc} Any probability measure $\mu$ on $[0,1]$ satisfying $\hat{\mu}(v) = 0$ for all $v \in V$ must be continuous.

        \item\label{measure/recurrencevdc} Any probability measure $\mu$ on $[0,1]$ satisfying $\sum_{v \in V}|\hat{\mu}(v)| < \infty$ must be continuous. 

        \item\label{Modulus1Functions} For any measure preserving system $(X,\mathscr{B},\mu,\tau)$ and any measurable $f:X\rightarrow\mathbb{S}^1$ satisfying $\langle \tau^vf,f\rangle = 0$ for all $v \in V$, we have $\int_Xfd\mu = 0$.

        \item\label{ErgodicMPSvdC} For any ergodic measure preserving system $(X,\mathscr{B},\mu,\tau)$ and any measurable $f \in L^2(X,\mu)$ satisfying $\langle \tau^vf,f\rangle = 0$ for all $v \in V$, we have $\int_Xfd\mu = 0$.

        \item\label{positivedefinitevdc} For any $\epsilon > 0$, there exists a finite, positive definite sequence $(a_n)_{n \in \mathbb{Z}}$ supported on $V\cup(-V)\cup\{0\}$ satisfying

        \begin{equation}
            \sum_{n \in \mathbb{Z}}a_n = 1\text{ and }a_0 < \epsilon.
        \end{equation}

        \item\label{WAPMeanCharacterization} Let $M$ denote the unique invariant mean on the set weakly almost periodic functions on $\mathbb{Z}$. If $\phi:\mathbb{Z}\rightarrow\mathbb{C}$ is a positive definite function for which $\phi(v) = 0$ for all $v \in V$, then $M(\phi) = 0$.

        \item\label{trigonometricpolynomialvdc} For any $\epsilon > 0$, there exists a trigonometric polynomial $P:[0,1]\rightarrow[-\epsilon,\infty)$ of the form

        \begin{equation}\label{trigonometricpolynomialequation}
            P(x) = \sum_{v \in V\cup(-V)}a_ve(vx)
        \end{equation}
        satisfying $P(0) = 1$.
    \end{enumerate}
\end{theorem}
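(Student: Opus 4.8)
The plan is to route every item through the single spectral statement \eqref{pointmassat0vdc} --- a probability measure $\mu$ on $[0,1]$ with $\hat\mu(v)=0$ for all $v\in V$ must satisfy $\mu(\{0\})=0$ --- and to establish all other equivalences by comparing against it. The bridge to the operator formulations is the spectral theorem together with Theorem \ref{GNSConstruction}: for a unitary $U$ and $\xi\in\mathcal H$ the spectral measure $\mu_\xi$ satisfies $\langle U^n\xi,\xi\rangle=\hat{\mu}_\xi(n)$ and $\lVert P_I\xi\rVert^2=\mu_\xi(\{0\})$, and every probability measure arises as some $\mu_\xi$ (take $U$ to be multiplication by $e(\cdot)$ on $L^2(\mu)$ and $\xi=\mathbbm 1$). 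This makes \eqref{OperatorialRecurrenceAppendix} and \eqref{pointmassat0vdc} literal restatements of each other, and \eqref{UniformDistributionvdCAppendix} is then equivalent to these by Theorem \ref{EquivalentCharacterizationsOfvdC}. The mean-ergodic identity $M(\phi)=\lim_N\frac1N\sum_{n\le N}\hat\mu(n)=\mu(\{0\})$ for a positive definite $\phi=\hat\mu$ (the mean of $n\mapsto e(n\theta)$ being $\mathbbm 1_{\theta=0}$) identifies \eqref{WAPMeanCharacterization} with \eqref{pointmassat0vdc} as well.

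Next I would dispatch the remaining measure and dynamics items by elementary manipulations of the spectral measure. For \eqref{continuousmeasurevdc}, the implication \eqref{continuousmeasurevdc}$\Rightarrow$\eqref{pointmassat0vdc} is trivial, while the converse passes to the autocorrelation measure $\rho$ with $\hat\rho(n)=\lvert\hat\mu(n)\rvert^2$, which is a probability measure with $\rho(\{0\})=\sum_\alpha\mu(\{\alpha\})^2$: if $\mu$ is annihilated by $V$ and has any atom, then $\rho$ is annihilated by $V$ and has $\rho(\{0\})>0$, contradicting \eqref{pointmassat0vdc}. For the $\mathbb S^1$-valued statement \eqref{Modulus1Functions}, Lemma \ref{Modulus1RepresentationForAbelianGroups} with $G=\mathbb Z$ (where $S(\mathbb Z)=\mathbb S^1$) produces a system and a modulus-one $f$ with $\langle T_hf,f\rangle=\hat\mu(h)$ and $\int_Xf\,d\mu=\mu(\{0\})$, giving \eqref{Modulus1Functions}$\Rightarrow$\eqref{pointmassat0vdc}, while \eqref{Modulus1Functions} is a special case of the general-$L^2$ statement in Theorem \ref{EquivalentCharacterizationsOfvdC}. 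Similarly \eqref{ErgodicMPSvdC} follows: Theorem \ref{ErgodicRepresentation} realizes $\hat\mu$ through an ergodic system and an $f\in L^2$, where ergodicity forces $\mu(\{0\})=\lVert P_If\rVert^2=\lvert\int_Xf\,d\mu\rvert^2$, so vanishing of the integral returns \eqref{pointmassat0vdc}.

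It remains to incorporate the two certificate items \eqref{positivedefinitevdc}, \eqref{trigonometricpolynomialvdc} and the summability item \eqref{measure/recurrencevdc}. The equivalence \eqref{positivedefinitevdc}$\Leftrightarrow$\eqref{trigonometricpolynomialvdc} is bookkeeping: a finite positive definite sequence corresponds via Herglotz to a nonnegative trigonometric polynomial $P=\sum a_ne(n\cdot)$ with $a_0=\int P\,dm$ ($m$ Lebesgue) and $P(0)=\sum a_n$, and adding or subtracting the constant $a_0$ converts between ``$P\ge0$ with small mean'' and ``$P\ge-\epsilon$ with no constant term''. The easy directions out of \eqref{positivedefinitevdc} both use the estimate $\lvert a_n\rvert\le a_0$, valid for any nonnegative $P$ since $\lvert a_n\rvert=\lvert\int Pe(-n\cdot)\,dm\rvert\le\int P\,dm=a_0$. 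Integrating a certificate $P$ against a measure $\mu$ annihilated by $V$ gives $\int P\,d\mu=a_0$, while $P\ge0$ and $P(0)=1$ force $\int P\,d\mu\ge\mu(\{0\})$, so $\mu(\{0\})\le a_0<\epsilon$ and \eqref{pointmassat0vdc} follows. Integrating $P$ against the autocorrelation $\rho$ of a measure with $\sum_{v\in V}\lvert\hat\mu(v)\rvert<\infty$ gives $\rho(\{0\})\le a_0+2a_0\sum_{v\in V}\hat\rho(v)\le\epsilon\bigl(1+2\sum_{v\in V}\lvert\hat\mu(v)\rvert\bigr)$, which forces $\rho(\{0\})=0$ and hence \eqref{measure/recurrencevdc}, the reverse \eqref{measure/recurrencevdc}$\Rightarrow$\eqref{continuousmeasurevdc} being immediate.

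The one genuinely nontrivial step, and the main obstacle, is the production of certificates, namely \eqref{pointmassat0vdc}$\Rightarrow$\eqref{positivedefinitevdc}. I would phrase this as a convex-duality (Hahn--Banach / minimax) statement: on the weak-$*$ compact convex set $\Sigma$ of probability measures annihilated by $V$, the upper semicontinuous functional $\mu\mapsto\mu(\{0\})$ attains a maximum, and the claim is that $\max_{\mu\in\Sigma}\mu(\{0\})$ equals the infimum of $\int P\,dm$ over nonnegative $P$ with $P(0)=1$ and $\widehat P$ supported on $V\cup(-V)\cup\{0\}$. The inequality ``$\ge$'' is the easy pairing just described; the reverse is a separation argument between the cone of such nonnegative polynomials and the measures, carried out after a finite truncation of $V$ followed by a weak-$*$ compactness limit. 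Granting this, \eqref{pointmassat0vdc} says the maximum is $0$, so the infimum is $0$, which is exactly the family of certificates in \eqref{positivedefinitevdc}. I expect the careful handling of the upper semicontinuity of the point-mass functional, and of the infinite spectrum of $V$ in the duality, to be the crux; everything else reduces to the spectral dictionary and the coefficient bound $\lvert a_n\rvert\le a_0$.
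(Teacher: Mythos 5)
Your architecture is sound and, where the paper actually proves anything itself, it is the same architecture: the paper cites Kamae--Mendes-France, Ruzsa, Bergelson--Lesigne and Peres for the classical equivalences \eqref{UniformDistributionvdCAppendix}, \eqref{OperatorialRecurrenceAppendix}, \eqref{pointmassat0vdc}, \eqref{continuousmeasurevdc}, \eqref{measure/recurrencevdc}, \eqref{positivedefinitevdc}, \eqref{trigonometricpolynomialvdc}, and obtains the new items \eqref{Modulus1Functions}, \eqref{ErgodicMPSvdC}, \eqref{WAPMeanCharacterization} exactly as you do --- via Lemma \ref{Modulus1RepresentationForAbelianGroups}, Theorem \ref{ErgodicRepresentation}, and the identity $M(\hat\mu)=\mu(\{0\})$, all routed through the spectral condition \eqref{pointmassat0vdc}. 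Your autocorrelation trick for \eqref{continuousmeasurevdc} and \eqref{measure/recurrencevdc}, the coefficient bound $|a_n|\le a_0$, and the spectral-theorem dictionary for \eqref{OperatorialRecurrenceAppendix} all check out. The one step you only sketch is \eqref{pointmassat0vdc}$\Rightarrow$\eqref{positivedefinitevdc}, which is genuinely the hard implication (it is Ruzsa's theorem); your duality strategy is correct, and the weak-$*$ limit over finite truncations of $V$ does work because $\mu\mapsto\mu(\{0\})$ is upper semicontinuous (portmanteau for the closed set $\{0\}$), but the finite-level separation between the cone of nonnegative polynomials with spectrum in $F\cup(-F)\cup\{0\}$ and the annihilated measures still has to be written out. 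It is worth noting that the paper does prove this step in full, in greater generality, as the implication \eqref{item:unitary_inv2}$\Rightarrow$\eqref{GeneralTrigonometricPolynomialvdC} of Theorem \ref{EquivalentCharacterizationsInCountableGroups}: there the Hahn--Banach separation is performed once, in the real Banach space of self-adjoint operators, between $B(V)+\epsilon$ and the cone of nonnegative operators, and the separating functional is converted by GNS plus the rank-one augmentation $\xi=(\eta,\sqrt{-r})$ into a representation contradicting operatorial recurrence; this sidesteps the truncation-and-limit bookkeeping that your measure-side duality requires. If you want a self-contained proof you should either import that operator-algebraic separation (specialized to $G=\mathbb{Z}$, it yields \eqref{positivedefinitevdc} directly) or write out the finite-dimensional LP duality on $[0,1]$ carefully.
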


The equivalence of \eqref{UniformDistributionvdCAppendix} and \eqref{OperatorialRecurrenceAppendix} is part of Theorem \ref{EquivalentCharacterizationsOfvdC}. The equivalence of \eqref{UniformDistributionvdCAppendix} and \eqref{pointmassat0vdc} is implicitly alluded to in the work of Kamae and Mendes-France \cite{KMFvdC}, and it was proven that \eqref{trigonometricpolynomialvdc}$\Rightarrow$\eqref{UniformDistributionvdCAppendix}. The equivalence of \eqref{UniformDistributionvdC}, \eqref{pointmassat0vdc}, and \eqref{trigonometricpolynomialvdc} was proven in the work of Ruzsa \cite{RuzsavanderCorput}. The equivalence of \eqref{UniformDistributionvdC}, \eqref{positivedefinitevdc}, and \eqref{measure/recurrencevdc} is due to Bergelson and Lesigne \cite{vanderCorputSetsInZ^d}. The equivalence of \eqref{pointmassat0vdc} and \eqref{continuousmeasurevdc} was known in the folklore for a long time, as many older papers also refer to vdC sets as FC$^+$ sets, with FC$^+$ being the abbreviation of ``Forces continuity of positive measures". The characterizations given by \eqref{Modulus1Functions}, \eqref{ErgodicMPSvdC}, and \eqref{WAPMeanCharacterization} are results of this paper.

Theorem \ref{FvdCIsvdCForAmenableGroups}  and Remark \ref{RemarkAboutCompletingTheListOfEquivalences} is our attempt to generalize Theorem \ref{EquivalentCharacterizationsOfvdC}\eqref{boundedcomplexvdC}-\eqref{MPSvdC} to the setting of countably infinite amenable groups. The work of Rodr\'iguez \cite{SaulsvdC} generalizes Theorem \ref{EquivalentCharacterizationsOfvdC}(i)-(ii) to the setting of countably infinite amenable groups. It is worth noting that if our group $G$ is not amenable we cannot easily talk about vdC sets and the equivalent characerizations that involve F\o lner sequences. We focus the rest of the disucssion on equivalent characterizations of sets of operatorial recurrence on general countably infinite groups $G$.

\begin{theorem}\label{EquivalentCharacterizationsInCountableGroups}
Let $G$ be a countable discrete group and let $M$ denote the unique mean on the set $W(G)$ of weakly almost periodic functions on $G$. For a set $V \subseteq G\setminus\{e\}$, the following are equivalent:
    \begin{enumerate}[(i)]
        \item \label{item:unitary_inv2} $V$ is a \textbf{set of operatorial recurrence}, i.e., for every unitary representation $\pi$ of $G$ on a Hilbert space $\mathcal{H}_\pi$, and every vector $\xi \in \mathcal{H}_{\pi}$, if $\langle \pi (v)\xi ,\xi \rangle =0$ for all $v\in V$, then $\xi$ is orthogonal to the subspace of $\pi (G)$-invariant vectors.  

        \item \label{item:unitary_approx_inv}
        For every $\epsilon >0$ and every finite set $H \subseteq G$ there is some $\delta >0$ and $F\subseteq V$ finite such that for every unitary representation $\pi$ of $G$ on $\mathcal{H}_\pi$, and every unit vector $\xi \in \mathcal{H}_{\pi}$, if $\sup _{v\in F}|\langle \pi (v)\xi ,\xi \rangle | <\delta$ then $|\langle \xi ,\eta \rangle |<\epsilon$ for every $(\pi (H),\delta )$-invariant unit vector $\eta \in \mathcal{H}_{\pi}$.

        \item\label{GeneralMPSvdC} For any measure preserving system $(X,\mathscr{B},\mu,(\tau_g)_{g \in G})$ and any $f \in L^2(X,\mu)$ satisfying $\langle \tau_vf,f\rangle\allowbreak = 0$ for all $v \in V$, we have $\int_Xfd\mu = 0$.

        \item\label{GeneralErgodicMPSvdC} For any ergodic measure preserving system $(X,\mathscr{B},\mu,(\tau_g)_{g \in G})$ and any $f \in L^2(X,\mu)$ satisfying $\langle \tau_vf,f\rangle = 0$ for all $v \in V$, we have $\int_Xfd\mu = 0$.
        
        \item\label{GeneralTrigonometricPolynomialvdC} For any unitary representation $U$ of $G$ on a Hilbert space $\mathcal{H}$ and any $\epsilon > 0$, there exists

        \begin{equation*}
            P \in B(V) := \left\{\sum_{g \in G}c_gU_g\ |\ (c_g)_{g \in G}\text{ has finite support contained in }V\cup V^{-1}\text{ and }\sum_{g \in G}c_g = 1\right\},
        \end{equation*}
        such that $P = P^*$ and $P+\epsilon$ is a positive operator.
        \item\label{Generalpositivedefinitevdc} For any $\epsilon > 0$, there exists a positive definite sequence $(a_g)_{g \in G}$ with finite support contained in $V\cup V^{-1}\cup\{e\}$ satisfying

        \begin{equation}
            \sum_{g \in G}a_g = 1\text{ and }|a_e| < \epsilon.
        \end{equation}

        \item\label{item:GeneralizedUnitary_inv2} For every unitary representation $\pi$ of $G$ on $\mathcal{H}_\pi$, and every vector $\xi \in \mathcal{H}_{\pi}$, if 
        
        \begin{equation}
            \sum_{v \in V}\left|\langle \pi(v)\xi,\xi\rangle\right| < \infty,
        \end{equation}
        then $\xi$ is orthogonal to the subspace of $\pi (G)$-invariant vectors.   

        \item\label{item:GeneralizedUnitary_invp} For every unitary representation $\pi$ of $G$ on $\mathcal{H}_\pi$, and every vector $\xi \in \mathcal{H}_{\pi}$, if there exists $p \in \mathbb{N}$ for which
        
        \begin{equation}
            \sum_{v \in V}\left|\langle \pi(v)\xi,\xi\rangle\right|^p < \infty,
        \end{equation}
        then $\xi$ is orthogonal to the subspace of $\pi (G)$-invariant vectors.   
        
        \item\label{item:GeneralizedWMUnitary_inv2} For every unitary representation $\pi$ of $G$ on $\mathcal{H}_\pi$, and every vector $\xi \in \mathcal{H}_{\pi}$, if there exists $p \in \mathbb{N}$ for which
        
        \begin{equation}
            \sum_{v \in V}\left|\langle \pi(v)\xi,\xi\rangle\right|^p < \infty,
        \end{equation}
        then $\xi$ is orthogonal to the closed subspace spanned by the finite dimensional subrepresentations of $\pi$.

        \item\label{MeanOnAPFunctionsCharacterization1} If $\phi \in \mathbf{P}(G)$ is such that $\phi(v) = 0$ for all $v \in V$, then $M(\phi) = 0$.

        \item\label{MeanOnAPFunctionsCharacterization2} If $\phi \in \mathbf{P}(G)$ is such that $\sum_{v \in V}|\phi(v)|^p < \infty$ for some $p \in \mathbb{N}$, then $M(|\phi|) = 0$.
    \end{enumerate}
\end{theorem}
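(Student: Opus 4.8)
The plan is to establish the equivalences via a web of implications organized around three clusters: the operatorial/representation-theoretic statements (\ref{item:unitary_inv2}, \ref{item:unitary_approx_inv}, \ref{item:GeneralizedUnitary_inv2}, \ref{item:GeneralizedUnitary_invp}, \ref{item:GeneralizedWMUnitary_inv2}), the dynamical statements (\ref{GeneralMPSvdC}, \ref{GeneralErgodicMPSvdC}), and the positivity/mean statements (\ref{GeneralTrigonometricPolynomialvdC}, \ref{Generalpositivedefinitevdc}, \ref{MeanOnAPFunctionsCharacterization1}, \ref{MeanOnAPFunctionsCharacterization2}). First I would handle the translation between dynamics and representations. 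The implication (\ref{GeneralMPSvdC})$\Rightarrow$(\ref{GeneralErgodicMPSvdC}) is trivial, and the converse follows from the ergodic decomposition: an invariant $f$ with vanishing off-diagonal correlations restricts to almost every ergodic component, and $\int_X f\,d\mu$ is the integral over the base of the fiberwise integrals. The equivalence of (\ref{item:unitary_inv2}) with (\ref{GeneralMPSvdC}) is the Gaussian/Koopman dictionary: given a unitary representation and a vector $\xi$, apply Theorem \ref{GaussianMPSConsequence} to $\phi(g)=\langle\pi(g)\xi,\xi\rangle$ to realize it dynamically, and conversely the Koopman representation of a m.p.s. on $L^2_0$ supplies a unitary representation. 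Here the subspace of invariant vectors corresponds to the constants, so $P_I\xi=0$ matches $\int_X f\,d\mu=0$.

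Next I would connect the representation statements to the positivity statements. The key observation is that for a positive definite $\phi$ with GNS triple $(U,\mathcal{H},\xi)$, the quantity $M(\phi)$ equals $\|P_I\xi\|^2$, where $P_I$ projects onto invariant vectors; this is because the unique invariant mean on $W(G)$ picks out exactly the invariant part, which one sees by decomposing $\mathcal{H}$ into its invariant and weakly-mixing parts and noting $M$ annihilates the latter. This immediately gives (\ref{item:unitary_inv2})$\Leftrightarrow$(\ref{MeanOnAPFunctionsCharacterization1}). For (\ref{Generalpositivedefinitevdc}) and (\ref{GeneralTrigonometricPolynomialvdC}), I would argue via a separation/Hahn--Banach duality: the negation of (\ref{item:unitary_inv2}) produces a $\phi\in\mathbf{P}(G)$ vanishing on $V$ with $M(\phi)>0$, which obstructs the existence of the required positive-definite sequence or positive operator, while conversely such a sequence $(a_g)$ forces $M(\phi)$ small by pairing $\sum_g a_g\phi(g)\ge 0$ against the mean. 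The operator version (\ref{GeneralTrigonometricPolynomialvdC}) follows by applying (\ref{Generalpositivedefinitevdc}) to the function $g\mapsto a_g$ and recognizing $P+\epsilon\ge 0$ as the statement that the associated quadratic form is nonnegative up to $\epsilon$.

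The summability variants (\ref{item:GeneralizedUnitary_inv2}), (\ref{item:GeneralizedUnitary_invp}), (\ref{item:GeneralizedWMUnitary_inv2}), (\ref{MeanOnAPFunctionsCharacterization2}) I would deduce by a diagonal/averaging argument: if $\sum_{v\in V}|\langle\pi(v)\xi,\xi\rangle|^p<\infty$ then along the Reiter or Cesàro averages the contribution of $V$ is negligible, so the same mean computation applies; the jump from $p=1$ to general $p$ is handled by noting $|\langle\pi(v)\xi,\xi\rangle|\le\|\xi\|^2$ is bounded, so $\ell^p$ summability implies $\ell^1$ summability is not automatic but the vanishing of the mean only needs the weak-mixing decomposition, which is insensitive to $p$. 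The implication to (\ref{item:GeneralizedWMUnitary_inv2}) strengthens the conclusion to orthogonality against all finite-dimensional subrepresentations; here I would invoke that a nonzero component in a finite-dimensional subrepresentation yields an almost periodic diagonal matrix coefficient that cannot be $\ell^p$-summable on the infinite set $V$ unless it vanishes.

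The main obstacle I expect is the quantitative uniform statement (\ref{item:unitary_approx_inv}), which is not merely a restatement but asserts a uniform $(\delta,F)$ threshold independent of the representation. Proving (\ref{item:unitary_inv2})$\Rightarrow$(\ref{item:unitary_approx_inv}) requires a compactness argument: if no uniform $\delta$ worked, one would extract a sequence of representations $\pi_n$ and unit vectors $\xi_n$ with $\sup_{v\in F_n}|\langle\pi_n(v)\xi_n,\xi_n\rangle|\to 0$ yet with persistent invariant overlap, then assemble these into a single direct-sum representation (or pass to an ultraproduct/weak-$*$ limit of the associated positive-definite functions) to contradict the qualitative statement (\ref{item:unitary_inv2}). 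Making the limiting positive-definite function genuinely vanish on all of $V$ while retaining a nonzero mean is the delicate point, since one must ensure the finite sets $F_n$ exhaust $V$ and that the limit lands back in $\mathbf{P}(G)$; the Bochner-type compactness of $\mathbf{P}(G)$ in the topology of pointwise convergence is what makes this work.
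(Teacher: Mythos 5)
There are genuine gaps. The most serious is your claimed proof of (\ref{GeneralErgodicMPSvdC})$\Rightarrow$(\ref{GeneralMPSvdC}) via the ergodic decomposition: the hypothesis $\langle T_vf,f\rangle=0$ does \emph{not} pass to almost every ergodic component. One only knows $\int_Y\langle T_vf_y,f_y\rangle\,d\gamma(y)=0$, and the fiberwise correlations are complex numbers that can cancel, so you cannot apply (\ref{GeneralErgodicMPSvdC}) componentwise. The paper's route is entirely different: it takes the positive definite function $\phi(g)=\langle T_gf,f\rangle$ and builds a \emph{new} ergodic system realizing it, via Theorem \ref{ErgodicRepresentation} (splitting the GNS representation into its compact and weakly mixing parts, handling the former by a compact group rotation and the latter by the Gaussian measure space construction). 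That theorem is the essential ingredient you are missing, and without it the equivalence of (\ref{GeneralErgodicMPSvdC}) with the rest does not follow.

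The second gap is in the summability items. Your stated principle for (\ref{item:GeneralizedWMUnitary_inv2}) --- that a nonzero component in a finite-dimensional subrepresentation gives an almost periodic diagonal coefficient which ``cannot be $\ell^p$-summable on the infinite set $V$ unless it vanishes'' --- is false as a standalone claim: for $G=\mathbb{Z}$, $\pi$ the direct sum of the trivial representation and an irrational rotation, and $\xi=(1/\sqrt2,1/\sqrt2)$, the coefficient $\tfrac12+\tfrac12 e^{2\pi i n\alpha}$ is $\ell^1$-summable on a suitable infinite set $V$ while $\xi$ has a nonzero invariant component. Any correct argument must actually invoke the hypothesis on $V$ applied to an auxiliary representation; the paper does this by passing to $\pi\otimes\pi^*$ realized on Hilbert--Schmidt operators, where $\sum_v|\langle\sigma_v(T_{\xi\otimes\xi^*}),T_{\xi\otimes\xi^*}\rangle|^p=\sum_v|\langle\pi_v\xi,\xi\rangle|^{2p}$ and invariant Hilbert--Schmidt operators detect finite-dimensional subrepresentations. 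Similarly, your appeal to ``Reiter or Ces\`aro averages'' is unavailable since $G$ is not assumed amenable; the $p=1$ case (\ref{item:GeneralizedUnitary_inv2}) is deduced quantitatively from the operator $P$ of (\ref{GeneralTrigonometricPolynomialvdC}) using $|c_g|\le c_e=\epsilon$, and the passage to general $p$ is a tensor-power trick, not a weak-mixing decomposition. Your Hahn--Banach sketch for (\ref{GeneralTrigonometricPolynomialvdC}) and (\ref{Generalpositivedefinitevdc}) names the right tool (the paper separates $B+\epsilon$ from the positive cone in the self-adjoint operators and then runs a GNS construction on the resulting functional), and your compactness outline for (\ref{item:unitary_approx_inv}) is reasonable, but these two items above are substantive missing ideas rather than omitted details.
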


\begin{proof}
    We first show that \eqref{item:unitary_inv2}$\Rightarrow$\eqref{GeneralTrigonometricPolynomialvdC}. 
    Let $A$ denote the set of all nonnegative bounded linear operators on the (complex) Hilbert space $\mathcal{H}$, let $B(\mathcal{H})$ denote the Banach space of self-adjoint bounded linear operators on $\mathcal{H}$, and let $B_{\mathbb{R}}(\mathcal{H})$ denote $B(\mathcal{H})$ viewed as a real-Banach space.
    Observe that $A$ is a closed convex set with nonempty interior in the $B_{\mathbb{R}}(\mathcal{H})$.
    Let $B = B(V)$, and let us assume for the sake of contradiction that there exists $\epsilon > 0$ for which $(B+\epsilon)\cap A = \emptyset$. 
    Since $(B+\epsilon)\cap B_{\mathbb{R}}(\mathcal{H}) = \{b+\epsilon\ |\ b \in B\text{ and }b = b^*\}$ is also a convex set, the Hahn-Banach separation theorem gives us a real-valued continuous linear functional $f$ on $B_{\mathbb{R}}(\mathcal{H})$, for which $r_A := \inf_{a \in A}f(a) \ge \sup\{f(b+\epsilon)\ |\ b \in B\text{ and }b = b^*\}$. We note that for any $a \in A$ and $\lambda \in \mathbb{R}^+$, we have $\lambda a \in A$, hence $r_A = 0$. It follows that $f$ is a positive linear functional, so we may assume without loss of generality that $||f|| = 1$. We extend $f$ by linearity to be a complex-valued functional on the Banach space $B(\mathcal{H})$ of all bounded linear operators on $\mathcal{H}$. Now we observe that for $\lambda \in \mathbb{R}$, $v \in V$, and $b \in B$, we have $b+\lambda i(U_v-U_{v^{-1}}) \in B$, hence

    \begin{equation}
        0 \ge f(b+\lambda i(U_v-U_{v^{-1}})) = f(b)+\lambda f(i(U_v-U_{v^{-1}})).
    \end{equation}
    Since $\lambda \in \mathbb{R}$ was arbitrary, we conclude that for all $v \in V$ we have 
    
    \begin{equation}\label{SameValueEquation}
        f(i(U_v-U_{v^{-1}})) = 0 \Rightarrow f(U_v) = f(U_{v^{-1}}).
    \end{equation}
    Similarly, we see that for any $\lambda \in \mathbb{R}$, $v_1,v_2 \in V$, and $b \in B$, we have $b+\lambda(U_{v_1}+U_{v_1^{-1}}-U_{v_2}-U_{v_2^{-1}}) \in B$, hence

    \begin{equation}
        0 \ge f\left(b+\lambda\left(U_{v_1}+U_{v_1^{-1}}-U_{v_2}-U_{v_2^{-1}}\right)\right) = f(b)+\lambda f\left(U_{v_1}+U_{v_1^{-1}}-U_{v_2}-U_{v_2^{-1}}\right).
    \end{equation}
    Since $\lambda \in \mathbb{R}$ and $v_1,v_2 \in V$ were all arbitrary, we see that 

    \begin{equation}
        f\left(U_{v_1}+U_{v_1^{-1}}\right) = f\left(U_{v_2}+U_{v_2^{-1}}\right) = 2r.
    \end{equation}
    Combining this with Equation \eqref{SameValueEquation}, we see that for any $v \in V\cup V^{-1}$ we have $f(U_v) = r$. Since $\frac{1}{2}(U_v+U_{v^{-1}}) \in B$, we see that $r \le 0$.
    
    We now claim that the sequence $(f(U_g))_{g \in G}$ is a positive definite sequence. To prove the claim, let $c_1,\cdots,c_n \in \mathbb{C}$ and $g_1,\cdots,g_n \in G$ be arbitrary, and let $V = \sum_{i,j = 1}^nc_i\overline{c_j}U_{g_j^{-1}g_i}$. We want to show that $f(V) \ge 0$, so it suffices to show that $V$ is a positive operator. To this end, let $\xi \in \mathcal{H}$ be arbitrary, and observe that

    \begin{equation}
        \langle V\xi,\xi\rangle = \langle\sum_{i,j = 1}^nc_i\overline{c_j}U_{g_j^{-1}g_i}\xi,\xi\rangle = \langle \sum_{i = 1}^nc_iU_{g_i}\xi,\sum_{i = 1}^nc_iU_{g_i}\xi\rangle \ge 0.
    \end{equation}
    
     Now that we have proven the claim, we use the GNS-construction to create a representation $\pi$ of $G$ on $\mathcal{H}'$ and a cyclic vector $\eta \in \mathcal{H}'$ for which $\langle \pi_g\eta,\eta\rangle_{\mathcal{H}'} = f(U_g)$ for all $g \in G$. Now let $\mathcal{H}'' = \mathcal{H}'\oplus\mathbb{C}$, let $\xi = (\eta,\sqrt{-r})$, and let $\pi_g' = \pi_g\oplus\text{Id}$. We see that for every $v \in V$ we have

    \begin{equation}
        \langle \pi_v'\xi,\xi\rangle_{\mathcal{H}''} = \langle \pi_v\eta,\eta\rangle_{\mathcal{H}'}+r = 0.
    \end{equation}
    Condition \eqref{item:unitary_inv2} tells us that $\xi$ is orthogonal to subspace of $\pi'(G)$-invariant vectors, which yields the desired contradiction.

    We now show that \eqref{GeneralTrigonometricPolynomialvdC}$\Rightarrow$\eqref{Generalpositivedefinitevdc}.
    Let $R$ denote the right regular representation of $G$ on $L^2(G,\lambda)$, where $\lambda$ is the counting measure.
    Let $\epsilon > 0$ be arbitrary and let $P = \sum_{h \in V\cup V^{-1}}c_hR_h$ be such that $P = P^*$, $P+\epsilon$ is positive, $(c_h)_{h \in V\cup V^{-1}}$ is finitely supported, and $\sum_{h \in V\cup V^{-1}}c_h = 1$.
    Since $P^* = \sum_{h \in V\cup V^{-1}}\overline{c_h}R_h^* = \sum_{h \in V\cup V^{-1}}\overline{c_h}R_{h^{-1}}$, we see that $c_{h^{-1}} = \overline{c_h}$ for all $h \in G$.
    Let $c_{e} = \epsilon$ and $c_g = 0$ for $g \notin V\cup V^{-1}\cup\{e\}$. 
    We will show that $(c_g)_{g \in G}$ is a positive definite sequence. 
    To this end, let $(\delta_g)_{g \in G}$ be the standard bases for $L^2(G,\lambda)$, i.e., $\delta_g(g) = 1$, and $\delta_g(h) = 0$ for all $h \neq g$.  
    We observe that $R_h\delta_g = \delta_{gh^{-1}}$.
    Let $(z_g)_{g \in G}$ be any finitely supported sequence of complex numbers, let $\xi = \sum_{g \in G}z_g\delta_g$, and observe that

    \begin{alignat*}{2}
        &\sum_{g,h \in G}z_g\overline{z_h}c_{h^{-1}g} = \left\langle\sum_{g \in G}z_g\delta_g,\sum_{g \in G}\left(\sum_{h \in G}\overline{c_{h^{-1}g}}z_h\right)\delta_g\right\rangle = \left\langle\sum_{g \in G}z_g\delta_g,\sum_{g \in G}\left(\sum_{h \in G}c_{g^{-1}h}z_h\right)\delta_g\right\rangle\\
        =& \left\langle\sum_{g \in G}z_g\delta_g,\sum_{g \in G}\left(\sum_{h \in G}c_{g^{-1}h}z_hR_{g^{-1}h}\delta_h\right)\right\rangle = \left\langle\sum_{g \in G}z_g\delta_g,\sum_{h \in G}\left(\sum_{g \in G}c_{g^{-1}h}R_{g^{-1}h}\right)z_h\delta_h\right\rangle\\
        =& \langle\xi,(P+\epsilon)\xi\rangle \ge 0.
    \end{alignat*}
    Since $\sum_{g \in G}c_g = 1+\epsilon$, we see that the desired positive definite sequence $(a_g)_{g \in G}$ is given by $a_g = \frac{1}{1+\epsilon}c_g$.

    Next, we show that \eqref{Generalpositivedefinitevdc}$\Rightarrow$\eqref{GeneralTrigonometricPolynomialvdC}. Let $\epsilon > 0$ be arbitrary, let $\epsilon' = \frac{\epsilon}{1+\epsilon}$, and observe that for $x \in (0,\epsilon')$ we have $\frac
    {x}{(1-x)} < \epsilon$. 
    Let $(a_g)_{g \in G}$ be a positive definite sequence with a finite support contained in $V\cup V^{-1}\cup\{e\}$ satisfying $\sum_{g \in G}a_g = 1$ and $|a_e| < \epsilon'$. 
    Let $P = \frac{1}{1-|a_e|}\sum_{g \in V\cup V^{-1}}a_gU_g$. 
    Since $(a_g)_{g \in G}$ is positive definite, we see that $a_g = \overline{a_{g^{-1}}}$, so $P = P^*$.
    Since $P+\epsilon > P' = \frac{1}{1-|a_e|}\sum_{g \in V\cup V^{-1}\cup\{e\}}a_gU_g$, it suffices to show that $P'$ is a positive operator. 
    To this end, we see that $f \in L^2(G,\lambda)$ given by $f(g) = a_g$ is a continuous positive definite function, so using \cite[Theorem 13.8.6]{C(star)AlgebraByDixmier} we pick a continuous positive definite function $\psi \in L^2(G,\lambda)$ for which $f = \psi*\psi$ and $\psi = \tilde{\psi}$, where $*$ denotes convolution and $\tilde{F}(g) := \overline{F(g^{-1})}$. 
    Letting $\Psi = \sum_{g \in G}\psi(g)U_g$, we see that $\Psi^* = \Psi$, and $(1-\epsilon')P' = \Psi\Psi = \Psi\Psi^*$, so $P'$ is a positive operator.

    We now show that \eqref{GeneralTrigonometricPolynomialvdC}$\Rightarrow$\eqref{item:GeneralizedUnitary_inv2}. Let $\xi_I$ denote the projection of $\xi$ onto the subspace of $\pi(G)$-invariant vectors, and let $\xi = \xi_I+\xi'$. Let $\epsilon > 0$ be arbitrary, and let $P = \sum_{g \in V\cup V^{-1}}c_g\pi_g \in B(V)$ be such that $P+\epsilon$ is a positive operator. Letting $c_{e} = \epsilon$ and $c_g = 0$ for $g \notin V\cup V^{-1}\cup\{e\}$, we see that $(c_g)_{g \in G}$ is a positive definite sequence, so for all $g \in G$ we have $|c_g| \le |c_{e}| = \epsilon$. We now see that

    \begin{alignat*}{2}
        &||\xi_I||^2 = \langle P\xi_I,\xi_I\rangle < \langle (P+\epsilon)\xi_I,\xi_I\rangle \le \langle (P+\epsilon)\xi,\xi\rangle = \epsilon||\xi||^2+\sum_{g \in V\cup V^{-1}}c_g\langle \pi_g\xi,\xi\rangle\\
        \le & \epsilon||\xi||^2+\sum_{g \in V\cup V^{-1}}|c_g|\cdot\left|\langle \pi_g\xi,\xi\rangle\right| \le \epsilon||\xi||^2+\epsilon\sum_{g \in V\cup V^{-1}}|\langle\pi_g\xi,\xi\rangle|.
    \end{alignat*}
    Since $\epsilon > 0$ was arbitrary, we see that $||\xi_I||^2 = 0$.

    We now show that \eqref{item:GeneralizedUnitary_inv2}$\rightarrow$\eqref{item:GeneralizedUnitary_invp}. 
    Let $\pi^p$ be the tensor product of $p$ copies of $\pi$ acting on $\otimes_{i = 1}^p\mathcal{H}$. We see that $\xi^p := \otimes_{i = 1}^p\xi \in \otimes_{i = 1}^p\mathcal{H}$ satisfies

    \begin{equation}
        \sum_{v \in V}|\langle \pi^{p_0}(v)\xi^p,\xi^p\rangle| = \sum_{v \in V}|\langle\pi(v)\xi,\xi\rangle|^p < \infty,
    \end{equation}
    so $\xi^p$ is orthogonal to the space of $\pi^p$-invariant vectors, hence $\xi$ is orthogonal to the space of $\pi$-invariant vectors.

    It is clear that \eqref{item:GeneralizedWMUnitary_inv2}$\rightarrow$\eqref{item:unitary_inv2}, so we proceed to show that \eqref{item:GeneralizedUnitary_invp}$\rightarrow$\eqref{item:GeneralizedWMUnitary_inv2}. 
    Assume that \eqref{item:GeneralizedUnitary_invp} holds, and suppose that $\pi$ is a unitary representation of $G$, and $\xi\in \mathcal{H}=\mathcal{H}_{\pi}$ and $p\geq 1$ are such that $\sum_{v\in V}|\langle \pi _v \xi , \xi \rangle |^p<\infty$.
    Let $\bar{\pi}$ be the conjugate representation of $\pi$ on the conjugate Hilbert space $\bar{\mathcal{H}} = \{ \bar{\eta} : \eta \in \mathcal{H} \}$ of $\mathcal{H}$, i.e., scalar multiplication in $\bar{H}$ is defined by $c\bar{\eta}= \overline{\bar{c}\eta}$, the inner product is given by $\langle \bar{\eta}_0,\bar{\eta}_1\rangle =\langle \eta _1 , \eta _0 \rangle$, and $\bar{\pi}$ is defined by $\bar{\pi}_g\bar{\eta}= \overline{\pi _g \eta}$.
    Let $\mathrm{HS}(\mathcal{H})$ be the Hilbert space of all Hilbert-Schmidt operators on $\mathcal{H}$, and let $\sigma$ be the unitary representation on $\mathrm{HS}(\mathcal{H})$ given by $\sigma _g(T) := \pi _gT\pi _g ^{-1}$ for $T\in \mathrm{HS}(\mathcal{H})$ and $g\in G$.  
    Then the representations $\pi\otimes \bar{\pi}$ and $\sigma$ are isomorphic via the map $\mathcal{H}\otimes \bar{\mathcal{H}} \to \mathrm{HS}(\mathcal{H})$, $\zeta\mapsto \tau_{\zeta}$, determined by $\langle \tau_{\zeta} \eta _0, \eta _1 \rangle := \langle \zeta , \eta _1\otimes \bar{\eta} _0\rangle$, for $\zeta\in \mathcal{H}\otimes\bar{\mathcal{H}}$ and $\eta _0, \eta _1 \in \mathcal{H}$. 
    We have
    \[
    \sum _{v\in V}|\langle \sigma _v(\tau_{\xi\otimes \bar{\xi}}), \tau_{\xi\otimes \bar{\xi}}\rangle |^p = \sum _{v\in V}|\langle (\pi _v \xi )  \otimes \overline{(\pi _v \xi )}, \xi\otimes \bar{\xi} \rangle | ^p = \sum _{v\in V}|\langle \pi _v \xi , \xi \rangle | ^{2p}  < \infty ,
    \]
    so the assumption that \eqref{item:GeneralizedUnitary_invp} holds lets us deduce that $\tau_{\xi\otimes \bar{\xi}}$ is orthogonal in $\mathrm{HS}(\mathcal{H})$ to the subspace of all $\sigma$-invariant vectors. 
    In particular, given a finite dimensional $\pi$-invariant subspace $\mathcal{K}$ of $\mathcal{H}$, it follows that $\tau_{\xi\otimes \bar{\xi}}$ is orthogonal to the orthogonal projection $P_{\mathcal{K}}$ to $\mathcal{K}$.
    Taking an orthonormal basis $B_{\mathcal{K}}$ for $\mathcal{K}$ and extending it to an orthonormal basis $B$ for $\mathcal{H}$, we compute  
    \[
0=\langle \tau_{\xi\otimes \bar{\xi}} , P_{\mathcal{K}}\rangle = \sum _{e,f\in B}\langle \tau_{\xi\otimes \bar{\xi}}e,f\rangle \overline{\langle P_{\mathcal{K}}e,f\rangle} = \sum _{e\in B_{\mathcal{K}}}\langle \tau_{\xi\otimes \bar{\xi}}e, e\rangle = \sum _{e\in B_{\mathcal{K}}}|\langle \xi , e \rangle | ^2 = \| P_{\mathcal{K}}(\xi ) \| ^2,    
    \]
which shows that $\xi$ is orthogonal to $\mathcal{K}$.

    It is clear that \eqref{item:unitary_approx_inv}$\rightarrow$\eqref{item:unitary_inv2}, so let us now show that \eqref{item:unitary_inv2}$\rightarrow$\eqref{item:unitary_approx_inv}.
    Let $(v_n)_{n = 1}^\infty$ be an enumeration of the elements of $V$, and for $n \in \mathbb{N}$ let $F_n = (v_m)_{m = 1}^n$.
    Let $(H_n)_{n = 1}^\infty$ be an exhaustion of $G$ by finite sets.
    Let us assume for the sake of contradiction that there exists $\epsilon > 0$ such that for all $n \in \mathbb{N}$ there exists a unitary representation $\pi_n$ of $G$ on a Hilbert space $\mathcal{H}_n$ and a unit vector $\xi_n \in \mathcal{H}_n$ for which $\sup_{v \in F_n}|\pi_n(v)\xi_n,\xi_n\rangle| < \delta$ and $|\langle\xi_n,\eta_n\rangle| \ge \epsilon$ for some $(\pi_n(H_n),\frac{1}{n})$-invariant unit vector $\eta_n \in \mathcal{H}_n$.
    Let $p \in \beta\mathbb{N}^*$ be a nonprincipal ultrafilter and let $\mathcal{H} := \prod_p\mathcal{H}_n$ denote the ultraproduct of $(\mathcal{H}_n)_{n = 1}^\infty$ with respect to $p$.
    Let $\xi,\eta \in \mathcal{H}$ be the unit vectors corresponding to the equivalence classes of $(\xi_n)_{n = 1}^\infty$ and $(\eta_n)_{n = 1}^\infty$ respectively, and let $\pi$ be the unitary representation of $G$ on $\mathcal{H}$ given by $\pi(g)(x_n)_{n = 1}^\infty = (\pi_n(g)x_n)_{n = 1}^\infty$.
    We see that for $g \in G$ we have

    \begin{equation}
        \langle\pi(g)\eta,\eta\rangle_{\mathcal{H}} = p-\lim_{n\rightarrow\infty}\langle\pi_n(g)\eta_n,\eta_n\rangle_{\mathcal{H}_n} = 1,
    \end{equation}
    so $\eta$ is a $\pi(G)$-invariant vector. 
    We also see that for $v \in V$ we have

    \begin{equation}
        \langle\pi(v)\xi,\xi\rangle_{\mathcal{H}} = p-\lim_{n\rightarrow\infty}\langle\pi_n(v)\xi_n,\xi_n\rangle_{\mathcal{H}_n} = 0,
    \end{equation}
    so the desired contradiction now follows from the observation that

    \begin{equation}
        |\langle\xi,\eta\rangle_{\mathcal{H}}| = p-\lim_{n\rightarrow\infty}|\langle\xi_n,\eta_n\rangle_{\mathcal{H}_n}| \ge \epsilon.
    \end{equation}
    
    Now we show that \eqref{item:unitary_inv2} is equivalent to \eqref{MeanOnAPFunctionsCharacterization1} and that \eqref{item:GeneralizedWMUnitary_inv2} is equivalent to \eqref{MeanOnAPFunctionsCharacterization2}. Let $P_I:\mathcal{H}\rightarrow\mathcal{H}$ denote the orthogonal projection onto the space of $\pi$-invariant vectors.
    To this end, we recall that a function $\phi:G\rightarrow\mathbb{C}$ is positive definite if and only if there exists a unitary representation $\pi$ of $G$ on a Hilbert space $\mathcal{H}$ and a cyclic vector $\xi$ such that $\phi(g) = \langle \pi(g)\xi,\xi\rangle$. 
    The desired result follows from the observation that $M(\phi) = 0$ if and only if $||P_I\xi|| = 0$, and $M(|\phi|) = 0$ if and only if $\pi$ has no finite dimensional subrepresentations (see Section \ref{SectionOnMeans}).

    It is clear that \eqref{item:unitary_inv2}$\rightarrow$\eqref{GeneralMPSvdC}$\rightarrow$\eqref{GeneralErgodicMPSvdC}, so we proceed to show that \eqref{GeneralErgodicMPSvdC}$\rightarrow$\eqref{item:unitary_inv2}. 
    Since $\phi(g) = \langle \pi(g)\xi,\xi\rangle$ is a positive definite sequence, we use Theorem \ref{ErgodicRepresentation} to construct an ergodic m.p.s. $(X,\mathscr{B},\mu,(\tau_g)_{g \in G})$ and a $f \in L^2(X,\mu)$ for which $\phi(g) = \langle \tau_gf,f\rangle$. We observe that $\left(\int_Xfd\mu\right)^2 = M(\phi) = ||P_I\xi||^2$. Since $0 = \langle \pi(v)\xi,\xi\rangle = \langle \tau_vf,f\rangle$ for all $v \in V$, we see that $0 = \int_Xfd\mu = ||P_I\xi||$.
\end{proof}

\begin{theorem}\label{SpectralCharacterizationsForAbelianGroups}
    Let $G$ be a countably infinite abelian group. For $V \subseteq G$ the following are equivalent:
    \begin{enumerate}[(i)]
        \item\label{AbelianOperatorialRecurrence} $V$ is a set of operatorial recurrence.

        \item\label{AbelianSpectralvdC} For any probability measure $\mu$ on $\widehat{G}$ satisfying $\hat{\mu}(v) = 0$ for all $v \in V$, we have $\mu(\{0\}) = 0$.

        \item\label{GeneralizedAbelianOperatorialRecurrence} For every unitary representation $\pi$ of $G$ and every vector $\xi \in H_\pi$, if $\sum_{v \in V}|\langle \pi(v)\xi,\xi\rangle|^p < \infty$ for some $p \in \mathbb{N}$, then $\xi$ is orthogonal to all eigenvectors of $\pi$. 

        \item\label{GeneralizedAbelianSpectralvdC} For any probability measure $\mu$ on $\widehat{G}$ satisfying $\sum_{v \in V}|\hat{\mu}(v)|^p < \infty$ for some $p \in \mathbb{N}$, we have that $\mu$ is continuous.
    \end{enumerate}
\end{theorem}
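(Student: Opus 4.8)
The plan is to reduce all four statements to the characterizations of sets of operatorial recurrence already established in Theorem \ref{EquivalentCharacterizationsInCountableGroups}, using Bochner's theorem to translate between positive definite functions on $G$ and finite positive measures on the compact dual $\widehat{G}$. The one genuinely abelian ingredient is a spectral dictionary for cyclic representations: if $\phi \in \mathbf{P}(G)$ and $(U,\mathcal{H},\xi)$ is its GNS triple (Theorem \ref{GNSConstruction}), then $U$ is unitarily equivalent to multiplication by $\chi \mapsto \chi(g)$ on $L^2(\widehat{G},\mu)$ with cyclic vector the constant function $\mathbbm{1}$, where $\mu$ is the Bochner measure satisfying $\phi = \hat{\mu}$. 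Under this identification the $U$-invariant vectors are exactly the functions supported on the trivial character $0 = e_{\widehat{G}}$, so $P_I\mathbbm{1} = \mathbbm{1}_{\{0\}}$ and $\|P_I\xi\|^2 = \mu(\{0\})$; moreover a joint eigenvector is a function supported on a single character $\lambda$, which is a nonzero element of $L^2(\widehat{G},\mu)$ precisely when $\mu(\{\lambda\}) > 0$. Since $G$ is abelian, every finite dimensional subrepresentation splits into one dimensional pieces, so $U$ has a finite dimensional subrepresentation if and only if it has an eigenvector if and only if $\mu$ has an atom; equivalently, $\mu$ is continuous precisely when $U$ has no finite dimensional subrepresentation.

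Granting this dictionary, the equivalence of \eqref{AbelianOperatorialRecurrence} and \eqref{GeneralizedAbelianOperatorialRecurrence} is immediate. For an abelian group the closed span of the eigenvectors of $\pi$ coincides with the closed span of its finite dimensional subrepresentations, so being orthogonal to all eigenvectors is the same as being orthogonal to the latter span. Hence \eqref{GeneralizedAbelianOperatorialRecurrence} is verbatim item \eqref{item:GeneralizedWMUnitary_inv2} of Theorem \ref{EquivalentCharacterizationsInCountableGroups}, which that theorem already shows is equivalent to item \eqref{item:unitary_inv2}, i.e. to $V$ being a set of operatorial recurrence.

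To link \eqref{AbelianOperatorialRecurrence} with \eqref{AbelianSpectralvdC}, I would pass through item \eqref{MeanOnAPFunctionsCharacterization1}, that $\phi(v) = 0$ for all $v \in V$ forces $M(\phi) = 0$. Given a probability measure $\mu$ on $\widehat{G}$ with $\hat{\mu}(v) = 0$ for $v \in V$, the function $\phi = \hat{\mu}$ lies in $\mathbf{P}(G)$ and vanishes on $V$, and the dictionary gives $M(\phi) = \|P_I\mathbbm{1}\|^2 = \mu(\{0\})$, so \eqref{MeanOnAPFunctionsCharacterization1} yields $\mu(\{0\}) = 0$. Conversely any $\phi \in \mathbf{P}(G)$ vanishing on $V$ equals $\phi(e)\hat{\mu}$ for a probability measure $\mu$ (the case $\phi(e) = 0$ being trivial), and $\mu(\{0\}) = 0$ gives $M(\phi) = \phi(e)\mu(\{0\}) = 0$. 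The equivalence of \eqref{GeneralizedAbelianOperatorialRecurrence} and \eqref{GeneralizedAbelianSpectralvdC} is handled identically through item \eqref{MeanOnAPFunctionsCharacterization2}: the summability hypothesis $\sum_{v \in V}|\hat{\mu}(v)|^p < \infty$ is preserved under $\phi = \hat{\mu}$, and by the dictionary $M(|\phi|) = 0$ holds if and only if $U$ has no finite dimensional subrepresentation, i.e. if and only if $\mu$ is continuous.

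Since \eqref{AbelianOperatorialRecurrence} and \eqref{GeneralizedAbelianOperatorialRecurrence} are already equivalent, the two pairs close into a single equivalence class. The only step requiring real care is the spectral dictionary of the first paragraph, where one must justify that the GNS representation of $\hat{\mu}$ is exactly the multiplication representation on $L^2(\widehat{G},\mu)$ and correctly match its invariant vectors and eigenvectors with, respectively, the atom at the trivial character and the atoms of $\mu$. Once this is in place, everything else is a direct translation of Theorem \ref{EquivalentCharacterizationsInCountableGroups} across Bochner duality.
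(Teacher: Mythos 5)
Your proposal is correct and follows essentially the same route as the paper: both reduce the statement to Theorem \ref{EquivalentCharacterizationsInCountableGroups} via the spectral theorem for abelian groups, identifying $\mu(\{\chi\})$ with $\|P_\chi\xi\|^2$ in one direction and realizing a given $\hat{\mu}$ by the multiplication representation on $L^2(\widehat{G},\mu)$ in the other. The only cosmetic difference is that you route items \eqref{AbelianSpectralvdC} and \eqref{GeneralizedAbelianSpectralvdC} through the mean characterizations \eqref{MeanOnAPFunctionsCharacterization1} and \eqref{MeanOnAPFunctionsCharacterization2} of that theorem, whereas the paper pivots directly on \eqref{item:unitary_inv2} and \eqref{item:GeneralizedWMUnitary_inv2}; the underlying spectral dictionary is identical.
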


\begin{proof}
    The equivalence between \eqref{AbelianOperatorialRecurrence} and \eqref{GeneralizedAbelianOperatorialRecurrence} is a special case of the equivalence of \eqref{item:unitary_inv2} and \eqref{item:GeneralizedWMUnitary_inv2} in Theorem \ref{EquivalentCharacterizationsInCountableGroups}. To see that \eqref{AbelianSpectralvdC}$\rightarrow$\eqref{AbelianOperatorialRecurrence} and that \eqref{GeneralizedAbelianSpectralvdC}$\rightarrow$\eqref{GeneralizedAbelianOperatorialRecurrence}, it suffices to observe that the Spectral Theorem gives us a measure $\mu$ on $\widehat{G}$ for which $\hat{\mu}(g) = \langle \pi(g)\xi,\xi\rangle$ and $\mu(\{\chi\}) = ||P_{\chi}\xi||^2$, where $P_\chi:\mathcal{H}_\pi\rightarrow\mathcal{H}_\pi$ is the orthogonal projection onto the space of $\chi$-eigenvectors. To see that \eqref{AbelianOperatorialRecurrence}$\rightarrow$\eqref{AbelianSpectralvdC} and that \eqref{GeneralizedAbelianOperatorialRecurrence}$\rightarrow$\eqref{GeneralizedAbelianSpectralvdC}, it suffices to observe that the representation $\pi$ of $G$ on $L^2(\widehat{G},\mu)$ given by $(\pi(g)f)(\chi) = \chi(g)f(\chi)$ satisfies $\hat{\mu}(g) = \langle \pi(g)1,1\rangle$ and $\mu(\{\chi\}) = ||P_{\chi}1||^2$.
\end{proof}

In the work of Rodr\'iguez \cite{SaulsvdC}, a subset $V$ of a countably infinite group $G$ is a \textbf{vdC set} if for any measure preserving system $(X,\mathscr{B},\mu,(\tau_g)_{g \in G})$ and any $f \in L^\infty(X,\mu)$ satisfying $\langle \tau_vf,f\rangle = 0$ for all $v \in V$, we have $\int_Xfd\mu = 0$.\footnote{His decision to take this as the definition of vdC set was motivated by the fact that for an amenable group $G$, this definition coincides with the analogue of Definition \ref{vdCSetDefinition}.} Theorem \ref{EquivalentCharacterizationsInCountableGroups} shows us that every set of operatorial recurrence is a vdC set and Theorem \ref{FvdCIsvdCForAmenableGroups} shows us that vdC sets are sets of operatorial recurrence if $G$ is abelian. If Question \ref{MainConjecture} is answered in the positive, then every vdC set will also be a set of operatorial recurrence in any countably infinite group $G$.

Our next result is a list of properties of sets of operatorial recurrence, and this list is essentially the same list of properties of vdC sets given in \cite[Section 5]{SaulsvdC}. We only give the proof of one of these results here since the proofs of the rest are nearly identical to the analogous results for vdC sets.

\begin{theorem}\label{PropertiesOfSetsOfOperatorialRecurrence}
    Let $G$ be a countably infinite group and let $V \subseteq G$ be a set of operatorial recurrence.
    \begin{enumerate}[(i)]
        \item If $V = V_1\cup V_2$, then one of $V_1$ and $V_2$ is a set of operatorial recurrence.

        \item If $\phi:G\rightarrow H$ is a group homomorphism, then $\phi(V)$ is a set of operatorial recurrence.

        \item There exist sets of operatorial recurrence $V_1,V_2 \subseteq V$ with $V_1\cap V_2 = \emptyset$.

        \item If $L$ is a group containing $G$ as a subgroup, then $V$ is a set of operatorial recurrence in $L$.
        
        \item If $H$ is a subgroup of $G$ and $V \subseteq H$, then $V$ is a set of operatorial recurrence in $H$. 
        
        \item $V^{-1} := \{v^{-1}\ |\ v \in V\}$ is a set of operatorial recurrence in $G$.

        \item If $A \subseteq G$ is infinite, then $V := \{ab^{-1}\ |\ a,b \in A\}$ is a set of operatorial recurrence.\footnote{Kamae and Mendes-France \cite{KMFvdC} showed that if $I \subseteq \mathbb{N}$ is infinite, then $\{n-m\ |\ m,n \in I, m < n\}$ is a vdC set in $\mathbb{N}$.} Similarly, if $A \subseteq G$ is \textbf{thick}, i.e., for any finite set $H \subseteq G$ there exists $g_H \in G$ for which $g_HH \subseteq A$, then $A$ is a set of operatorial recurrence.

        \item If $H$ is a finite index subgroup of $G$, then $G\setminus H$ is not a set of operatorial recurrence. Similarly, if $H \subseteq G\setminus\{e\}$ is a finite set, then $H$ is not a set of operatorial recurrence.
    \end{enumerate}
\end{theorem}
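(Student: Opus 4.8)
The plan is to read every item off the equivalent characterizations gathered in Theorem~\ref{EquivalentCharacterizationsInCountableGroups}, so that all the combinatorics is carried by positive definite functions and unitary representations. Several items are then essentially immediate. Item (ii) follows by pulling a unitary representation of $H$ back along the homomorphism $\phi$ and applying the invariant-vector characterization \eqref{item:unitary_inv2}, since the invariant vectors of the target representation are among those of its pullback. Items (iv) and (v) are cleanest through the positive definite characterization \eqref{Generalpositivedefinitevdc}: a finitely supported positive definite sequence supported on $V\cup V^{-1}\cup\{e\}$ extends by zero from $G$ to an overgroup $L$ (remaining positive definite) to give (iv), and restricts from $G$ to a subgroup $H\supseteq V$ (remaining positive definite, with unchanged total mass since its support already lies in $H$) to give (v). Item (vi) is immediate from $\langle\pi(v^{-1})\xi,\xi\rangle=\overline{\langle\pi(v)\xi,\xi\rangle}$, and item (viii) is handled by explicit witnesses: the quasi-regular representation on $\ell^2(G/H)$ with $\xi=\delta_{eH}$ shows $G\setminus H$ fails \eqref{item:unitary_inv2} when $[G:H]<\infty$ (the matrix coefficient vanishes off $H$ while $\delta_{eH}$ has nonzero projection onto the constants), and for finite $H\not\ni e$ one exhibits a positive definite $\phi$ with $\phi|_H\equiv 0$, $\phi(e)=1$ and $M(\phi)>0$ to defeat \eqref{MeanOnAPFunctionsCharacterization1}. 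This leaves the genuinely substantive property, the partition regularity (i), which is what I would present in full; item (iii) is then the immediate corollary obtained by splitting $V$ into two infinite pieces.

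For (i) I would use the mean characterization \eqref{MeanOnAPFunctionsCharacterization1}: $V$ is a set of operatorial recurrence if and only if every $\phi\in\mathbf{P}(G)$ with $\phi|_V\equiv 0$ satisfies $M(\phi)=0$, together with the identity $M(\phi)=\|P_I\xi\|^2$ for $\phi(g)=\langle\pi(g)\xi,\xi\rangle$, where $P_I$ projects onto the $\pi(G)$-invariant vectors. Arguing by contraposition, suppose $V=V_1\cup V_2$ with neither $V_i$ a set of operatorial recurrence, so that for $i=1,2$ there is $\phi_i\in\mathbf{P}(G)$ with $\phi_i|_{V_i}\equiv 0$ and $M(\phi_i)>0$; write $\phi_i(g)=\langle\pi_i(g)\xi_i,\xi_i\rangle$ via the GNS construction. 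The key move is to pass to the pointwise product $\phi:=\phi_1\phi_2$, which is again positive definite because it is the matrix coefficient $g\mapsto\langle(\pi_1\otimes\pi_2)(g)(\xi_1\otimes\xi_2),\,\xi_1\otimes\xi_2\rangle$, and which vanishes on all of $V$, since every $v\in V$ lies in some $V_i$ and is annihilated by the corresponding factor.

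The crux, and where I expect the real difficulty, is to show $M(\phi)>0$: the mean is not multiplicative, so one cannot argue numerically from $M(\phi_1)$ and $M(\phi_2)$. Instead I would note that $(\operatorname{Inv}\pi_1)\otimes(\operatorname{Inv}\pi_2)$ consists of $\pi_1\otimes\pi_2$-invariant vectors, so the invariant projection of $\pi_1\otimes\pi_2$ dominates $P_I^{(1)}\otimes P_I^{(2)}$, whence
\begin{equation*}
    M(\phi)=\bigl\|P_I(\xi_1\otimes\xi_2)\bigr\|^2\ \ge\ \bigl\|(P_I^{(1)}\otimes P_I^{(2)})(\xi_1\otimes\xi_2)\bigr\|^2=M(\phi_1)\,M(\phi_2)>0,
\end{equation*}
contradicting that $V$ is a set of operatorial recurrence and proving (i). Finally, for the first half of (vii), the analogue of the Kamae--Mendes-France theorem, I would instead argue directly in Hilbert space: if $\langle\pi(v)\xi,\xi\rangle=0$ for all $v\in V=\{ab^{-1}:a,b\in A\}$ with $a\neq b$, then the vectors $\eta_a:=\pi(a^{-1})\xi$ for $a\in A$ are pairwise orthogonal of norm $\|\xi\|$ with common projection $P_I\eta_a=P_I\xi$, so expanding $\bigl\|\sum_{i=1}^n(\eta_{a_i}-P_I\xi)\bigr\|^2\ge 0$ over distinct $a_1,\dots,a_n\in A$ yields $\|P_I\xi\|^2\le\|\xi\|^2/n$, and letting $n\to\infty$ forces $P_I\xi=0$. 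The thick case I would treat separately, since it does not reduce to the difference-set case, via a direct largeness argument producing suitable positive definite sequences.
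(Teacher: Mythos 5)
Your overall strategy --- routing every item through the characterizations of Theorem~\ref{EquivalentCharacterizationsInCountableGroups} --- is consistent with the paper, which proves only item (v) explicitly and defers the remaining items to the analogous arguments of \cite[Section 5]{SaulsvdC}. Your arguments for (i), (ii), (iv), (vi), the first half of (vii), and the first half of (viii) are correct; in particular the tensor-product proof of partition regularity in (i), resting on $M(\phi)=\|P_I\xi\|^2$ and the containment $\operatorname{Inv}(\pi_1)\otimes\operatorname{Inv}(\pi_2)\subseteq\operatorname{Inv}(\pi_1\otimes\pi_2)$, is the standard argument and is complete. For (v) you take a genuinely different and arguably cleaner route than the paper: the paper induces the non-invariant part of a representation of $H$ up to $G$ and applies recurrence in $G$ to the resulting representation $\kappa$, whereas you restrict the finitely supported positive definite sequences of characterization \eqref{Generalpositivedefinitevdc} from $G$ to $H$, which is legitimate since their support $V\cup V^{-1}\cup\{e\}$ already lies in $H$ and positive definiteness is tested on finite subsets. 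Both work; yours avoids induced representations entirely.

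The one genuine gap is item (iii). Splitting $V$ into two pieces and invoking (i) shows only that \emph{one} of the pieces is a set of operatorial recurrence, whereas (iii) demands two disjoint subsets of $V$ that are \emph{both} sets of operatorial recurrence; your claimed derivation therefore does not prove (iii). A correct argument again goes through \eqref{Generalpositivedefinitevdc}: since deleting a finite subset from a set of operatorial recurrence preserves the property (by (i) together with the second half of (viii)), one can inductively choose pairwise disjoint finite sets $F_1,F_2,\dots\subseteq V$ such that each $F_k\cup F_k^{-1}\cup\{e\}$ supports a positive definite sequence summing to $1$ with $|a_e|<\epsilon_k\to 0$, and then take $V_1=\bigcup_k F_{2k}$ and $V_2=\bigcup_k F_{2k+1}$. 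Two smaller loose ends: the thick case of (vii) and the finite case of (viii) are only asserted. The latter follows at once from \eqref{Generalpositivedefinitevdc} (positive definiteness forces $|a_g|\le a_e<\epsilon$, so a sequence supported on a set of cardinality $m$ cannot sum to $1$ once $\epsilon<1/m$), which is cleaner than exhibiting an explicit witness $\phi$; the former needs an actual argument, for instance choosing $g_n$ with $g_nH_n\subseteq A$ for an exhaustion $(H_n)$ of $G$ and passing to a weak limit of $\pi(g_n^{-1})\xi$, which is orthogonal to the cyclic subspace of $\xi$ yet pairs with $P_I\xi$ to give $\|P_I\xi\|^2$.
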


\begin{proof}
    The only part of this Theorem whose proof is different from the analogous statement in \cite[Section 5]{SaulsvdC} is part (v). In particular, we need to show that if $H$ is a subgroup of $G$, and $V \subseteq H$, then $V$ is a set of operatorial recurrence in $H$. 
    Let $\pi$ be a representation of $H$ on $\mathcal{H}$, let $\mathcal{H} = \mathcal{H}_1\oplus\mathcal{H}_2$ where $\mathcal{H}_1$ is the space of $\pi$-invariant vectors, let $\pi'$ on $\mathcal{H}_2\otimes \ell^2(G/H)$ be the induced representation from $H$ to $G$ of $\pi$ restricted to $\mathcal{H}_2$, and let $\kappa$ be the direct sum of the trivial representation of $G$ on $\mathcal{H}_1$ with $\pi'$. Now let $\xi \in \mathcal{H}$ be such that $\langle \pi(v)\xi,\xi\rangle = 0$ for all $v \in V$. Let $\xi = \xi^{(1)}+\xi^{(2)}$ with $\xi^{(i)} \in \mathcal{H}_i$ and let $\xi' \in \mathcal{H}_1\oplus\left(\mathcal{H}_2\otimes\ell^2(G/H)\right)$ be given by $\xi' = \xi^{(1)}+\xi^{(2)}\otimes\mathbbm{1}_{\{eH\}}$. We see that $\langle \kappa(v)\xi',\xi'\rangle = 0$ for all $v \in V$, so $\xi^{(1)} = 0$ since $\mathcal{H}_1$ is the space of $\kappa$-invariant vectors, which yields the desired result.
\end{proof}
\bibliographystyle{abbrv}
\begin{center}
	\bibliography{references}
\end{center}

\end{document}